\newtheorem{Theorem}{Theorem}[section]
\newtheorem{prop}[Theorem]{Proposition}
\newtheorem{lemma}[Theorem]{Lemma}
\newtheorem{maintheorem}{Theorem}
\newtheorem{mainlemma}{Lemma}
\newcommand{\bmt}{\begin{maintheorem}}
\newcommand{\emt}{\end{maintheorem}}
\newcommand{\bml}{\begin{mainlemma}}
\newcommand{\eml}{\end{mainlemma}}
\theoremstyle{remark}
\newtheorem{remark}{Remark}[section]
\numberwithin{equation}{section}
\newcommand{\Diff}{{\rm Diff}^{r}} %difeos
\newcommand{\ME}{\mathcal{U}_{\mathcal{ME}}} %mostly expanding
\newcommand{\PH}{\mathcal{PH}} %\partial hyperbolic
\newcommand{\Gu}{{\mathbb{G}\mathrm{ibbs^u}}} %u_Gibbs
\newcommand{\Gcu}{{\mathbb{G}\mathrm{ibbs^{cu}}}} %cu-Gibbs
\newcommand{\M}{\mathbb{M}} %measures
\newcommand{\la}{\lambda}
\newcommand{\N}{\mathbb{N}}
\newcommand{\R}{\mathbb{R}}
\newcommand{\supp}{{\rm supp}\:}
\newcommand{\cU}{\mathcal{U}}
\newcommand{\cK}{\mathcal{K}}
\newcommand{\cQ}{\mathcal{Q}}
\newcommand{\cW}{\mathcal{W}^{cu}}
\newcommand{\cC}{\mathcal{C}}
\newcommand{\vol}{\operatorname{vol}}
\newcommand{\Bl}{\operatorname{Bl}}
\newcommand{\Emb}{\operatorname{Emb}}
\newcommand{\di}{\operatorname{dist}}
\title[Statistical Stability.]{Statistical stability of mostly expanding diffeomorphisms}
\date{\today}
\author[Martin Andersson]{Martin Andersson}
\address{Martin Andersson, Universidade Federal Fluminense, Departamento de Matem\'{a}tica Aplicada,
Rua Professor Marcos Waldemar de Freitas Reis, s/n, 24210-201, Niter\'{o}i, Brasil}
\email{nilsmartin@id.uff.br}
\author[Carlos H. V\'asquez]{Carlos H. V\'asquez}
\address{Carlos H. V\'asquez,
Instituto de Matem\'atica,
Pontificia Universidad Cat\'olica de Valpara\'{\i}so,
Blanco Viel 596, Cerro Bar\'on, 
Valpara\'{\i}so-Chile.
}
\email{carlos.vasquez@pucv.cl}
\subjclass{Primary: 37D30, 37C40, 37D25.}
\keywords{Partial hyperbolicity, Lyapunov exponents, SRB measures, stable ergodicity, statistical stability}
\newcommand{\newabstract}[1]{%
  \par\bigskip
  \csname otherlanguage*\endcsname{#1}%
  \csname captions#1\endcsname
  \item[\hskip\labelsep\scshape\abstractname.]
}
\begin{document}

\begin{abstract}
We study how physical measures vary with the underlying dynamics in the open class of $C^r$, $r>1$, strong partially hyperbolic diffeomorphisms for which the central Lyapunov exponents of every Gibbs $u$-state is positive. 
If transitive, such a diffeomorphism has a unique physical measure that persists and varies continuously with the dynamics.  

A main ingredient in the proof is a new Pliss-like Lemma which, under the right circumstances, yields frequency of hyperbolic times close to one. Another novelty is the introduction of a new characterization of Gibbs $cu$-states. Both of these may be of independent interest.

The non-transitive case is also treated: here the number of physical measures varies upper semi-continuously with the diffeomorphism, and physical measures vary continuously whenever possible.

\newabstract{french}

Nous \'etudions comment les mesures physiques varient avec la dynamique sous-jacente, dans la classe ouverte des diff\'eomorphismes $C^r$, $r>1$, fortement partiellement hyperboliques pour lesquelles les exposants de Lyapunov centraux de tout $u$-\'etat de Gibbs sont positifs. Lorsque transitifs, de tels diff\'eomorphismes poss\'edent une unique mesure physique qui persiste et varie continûment avec la dynamique.

Un des ingr\'edients principaux de la preuve est un nouveau lemme de type Pliss qui, appliqu\'e dans le contexte ad\'equate, implique que la fr\'equence des temps hyperboliques est proche de un. Une autre nouveaut\'e est l'introduction d'une nouvelle caract\'erisation des $cu$-\'etats de Gibbs. Chacun de ses deux aspects ayant leur propre int\'er\^et.

Le cas non transitif est aussi trait\'e : dans ce contexte, le nombre de mesures physiques est une fonction semi-continue sup\'erieure du diff\'eomorphisme, et les mesures physiques varient continument sous des hypoth\`eses naturelles.

\end{abstract}

\maketitle

%\newpage
%\tableofcontents

\newpage

%%%%%%%%%%%%%%%%%%%%%%%%%%%%%%%%%%%%%%%%%%%%%%%%%%%%%%%%%%%%%%%%%%%%%%%%%%%%%

\section{Introduction} \label{sec:intro}

The present work deals with the question of continuity of physical measures in the setting of partially hyperbolic diffeomorphisms, i.e., with a uniformly contracting bundle $E^s$, a central bundle $E^c$, and a uniformly expanding bundle $E^u$. Here, by {\em physical measure} we mean a Borel probability $\mu$ for which the basin
\begin{equation*}
B(\mu) = \{x \in M : \frac{1}{n} \sum_{k=0}^{n-1} \delta_{f^k(x)} \rightarrow \mu \}
\end{equation*}
has positive Lebesgue measure. 

The particular context under consideration here is that in which the central direction is mostly expanding, by which we mean that every Gibbs $u$-state has positive central Lyapunov exponents. Here, by {\em Gibbs $u$-states} we mean invariant probabilities absolutely continuous with respect to Lebesgue measure along the partition in strong unstable manifolds.

Mostly expanding diffeomorphisms include some very important examples from smooth ergodic theory, among which we have Derived-from-Anosov diffeomorphisms \cite{AV2018} as well as the well-known examples of partially hyperbolic diffeomorphisms with pathological center foliation due to Shub and Wilkinson (see \cite{SW2000} and \cite[Theorem~D]{Y2016}). The above definition of mostly expanding partially hyperbolic diffeomorphisms was introduced in \cite{AV2018}. It is important to note that it differs from the homonymous notion considered in the influential work \cite{ABV2000}. The reason for this change in terminology is that the concept of mostly expanding introduced in \cite{AV2018} (and considered here) is a closer analogue of the notion of \emph{mostly contracting}, introduced in \cite{BV2000}. The original notion of mostly expanding used in \cite{ABV2000} will be referred to by us as  {\em non uniformly expanding along the  center-unstable direction}, or simply the {\em NUE-condition}. As  showed in  \cite{AV2018, Y2016}, a partially hyperbolic diffeomorphism which is mostly expanding will necessarily satisfy the NUE-condition. That is,

$$ \mbox{Mostly expanding}\implies\mbox{ NUE-condition}$$
(but not the other way around). Another important feature of the mostly expanding condition is that it is open in the $C^r$ topology for $r>1$ --- something that is not true for the more general NUE-condition (see \cite{AV2018} for proofs and counter-examples). 
The regularity $r>1$ here may be any real number. If $r$ is not an integer, i.e. $r = k+\alpha$ for some $k \in \N$ and $\alpha \in (0,1)$, then by $C^r$ diffeomorphism we mean a $C^k$ diffeomorphism whose $k$-th derivative is $\alpha$-H\"older, and the topology considered is the one induced by a metric which, in charts, can be written as the sum of the $C^k$ metric and the $\alpha$-H\"older metric on the $k$-th derivative.

The NUE-condition was introduced as a way to guarantee the existence and finitude of physical measures. Since the mostly expanding condition is open and implies the NUE-condition, it provides an ideal setting for the following much sought after situation:

\begin{Theorem}[\cite{AV2018}] \label{MACA1teoC}
Let $f:M\to M$ be a $C^r$, $r>1$, partially hyperbolic  diffeomorphism  on a compact manifold. Suppose that every Gibbs $u$-state of $f$ has positive central Lyapunov exponent (i.e. $f$ is mostly expanding). Then there exists a $C^r$ neighborhood $\cU$ of $f$ such that every $g \in \cU$ is mostly expanding and has a finite number of physical measures whose basins together cover a full Lebesgue measure set in $M$.
\end{Theorem}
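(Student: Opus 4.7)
The plan is to separate the statement into two largely independent pieces: (i) the mostly expanding property is $C^r$-open, and (ii) every mostly expanding diffeomorphism admits finitely many physical measures whose basins cover a full Lebesgue measure set.

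For (i), I would first establish upper semicontinuity of the Gibbs $u$-state correspondence: if $g_n \to f$ in $C^r$ and $\mu_n$ is a $g_n$-Gibbs $u$-state converging in the weak-$*$ topology to $\mu$, then $\mu$ is an $f$-Gibbs $u$-state. The strong unstable foliation varies continuously under $C^1$ perturbations of $f$ (with $C^1$ leaves), and a uniform bounded distortion estimate valid on a $C^r$ neighborhood of $f$ provides a uniform upper bound on the conditional densities of $g_n$-Gibbs $u$-states along their unstable plaques. Using foliation charts for $f$ into which nearby $g_n$-unstable plaques embed, these densities pass to an admissible conditional density of $\mu$ along $f$-unstable plaques. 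Combined with continuity of the integrated central exponent $(g,\mu)\mapsto \int \log \|Dg|_{\ec}\|\, d\mu$ in the product topology (using continuity of the splitting under $C^1$ perturbations), a contradiction argument rules out any sequence $g_n\to f$ carrying Gibbs $u$-states with some non-positive central Lyapunov exponent. Hence a $C^r$ neighborhood $\cU$ of $f$ consists entirely of mostly expanding diffeomorphisms.

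For (ii), I would invoke the implication mostly expanding $\Rightarrow$ NUE-condition stated in the excerpt and proved in \cite{AV2018, Y2016}. Once the NUE-condition is in force, Lebesgue-a.e.\ point is non-uniformly expanding along the center-unstable direction, so the Alves--Bonatti--Viana theory \cite{ABV2000} produces, via hyperbolic times, uniformly expanding $cu$-disks, and a Hopf-style argument, finitely many ergodic physical measures whose basins cover a full Lebesgue measure set.

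The main obstacle should be the first step, specifically the passage of conditional densities along unstable plaques to the limit under $C^r$ perturbations. This requires both a uniform bounded distortion estimate and continuous dependence of the strong unstable foliation on the diffeomorphism, so that the compactness argument on conditional densities respects the change of foliation as $g_n \to f$. Once openness is in place, step (ii) reduces to invoking the finiteness result for NUE systems already available in the literature.
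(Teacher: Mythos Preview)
The paper does not contain a proof of this statement. Theorem~\ref{MACA1teoC} is quoted from \cite{AV2018} as a background result and is used without proof; the present paper's contributions begin only with the statistical stability questions (Theorems~A--D). So there is no ``paper's own proof'' to compare your proposal against.

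That said, your outline is broadly in the spirit of how the result is established in \cite{AV2018}, with one caveat. In part (i) you claim continuity of $(g,\mu)\mapsto \int \log \|Dg|_{\ec}\|\, d\mu$ and identify this with the ``integrated central exponent''. That integral is indeed jointly continuous, but it is \emph{not} the minimum central Lyapunov exponent $\hat\lambda^c(g,\mu)=\int \liminf_n \tfrac1n \log \|(Dg^n|E^c_x)^{-1}\|^{-1}\,d\mu$ unless $\dim E^c=1$; in general it bounds the top central exponent, not the bottom one. What is actually needed (and what the paper imports as Proposition~\ref{semicont of Lyapunov exponent} from \cite{AV2018}) is \emph{lower} semicontinuity of $\hat\lambda^c$ on $\Gu(\cU)$, which requires a subadditive-cocycle argument rather than a one-step computation. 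With that correction, your contradiction argument combining compactness/upper semicontinuity of $g\mapsto\Gu(g)$ (Proposition~\ref{prop:uM7}) with lower semicontinuity of $\hat\lambda^c$ does yield $C^r$-openness. Part (ii) of your plan, reducing to the NUE-condition and invoking \cite{ABV2000}, matches exactly what \cite{AV2018} does.
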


Theorem \ref{MACA1teoC} begs the question of how the physical measures of a mostly expanding diffeomorphism react to small deterministic perturbations. Will the number of physical measures remain the same? If so, do they vary continuously with the dynamics? Such questions have been answered quite satisfactorily for the analogous notion of mostly contracting diffeomorphisms. A partially hyperbolic diffeomorphism is said to be \emph{mostly contracting} if all its Gibbs $u$-states have negative central Lyapunov exponents. Just like the mostly expanding condition, it is $C^r$ open for any $r>1$. (This was noted in \cite{D2004} and proved in detail in \cite{A2010}).  Moreover it was proved in \cite{A2010} (see also \cite{DVY2016}) that 
\begin{enumerate}
\item the number of physical measures of mostly contracting diffeomorphisms varies upper semi-continuously with the dynamics, and
\item physical measures vary continuously in the weak* topology under perturbations that don't change the number of physical measures.
\end{enumerate}
In \cite{VY2013,DVY2016} the authors gave a detailed explanation of how bifurcations occur and an exhaustive set of examples.  They have also proved a form of continuity of the basins of physical measures.

The aim of this work is to complete the picture by proving similar results for mostly expanding diffeomorphisms. We start by considering the case in which the physical measure is unique.

\begin{maintheorem}\label{one pm}
Let $f:M \to M$ be a $C^r$ , $r>1$, transitive partially hyperbolic diffeomorphism of type $TM = E^s \oplus E^c \oplus E^u$ such that every Gibbs $u$-state has positive central Lyapunov exponents. Then there is a $C^r$ neighborhood $\cU$ of $f$ such that every $g \in \cU$ has a unique physical measure $\mu_g$. Moreover $\mu_g$ varies continuously with $g$ in the weak* topology.
\end{maintheorem}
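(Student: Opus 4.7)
My plan is to combine Theorem~\ref{MACA1teoC} with the two new ingredients announced in the abstract: the new Pliss-like lemma giving hyperbolic times of frequency close to one, and the new characterization of Gibbs $cu$-states. By Theorem~\ref{MACA1teoC} there is a $C^r$ neighborhood $\cU_0$ of $f$ on which every $g$ is mostly expanding with finitely many physical measures (which in this setting are precisely the ergodic Gibbs $cu$-states of $g$) whose basins cover a Lebesgue-full subset of $M$. The argument then has three ingredients.

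\emph{Uniqueness for $f$.} The transitivity of $f$, together with the positivity of central Lyapunov exponents for every Gibbs $u$-state, forces every ergodic Gibbs $cu$-state of $f$ to be supported on all of $M$ (its support is $u$-saturated, closed, $f$-invariant, and contains a dense unstable leaf). A basin argument --- using disjointness of basins of distinct physical measures combined with the full-measure cover by basins --- rules out more than one ergodic Gibbs $cu$-state, yielding a single physical measure $\mu_f$.

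\emph{Upper semi-continuity of Gibbs $cu$-states.} If $g_n \to f$ in $\cU_0$ and $\nu$ is a weak* limit of Gibbs $cu$-states $\mu_n$ of $g_n$, then $\nu$ is a Gibbs $cu$-state of $f$. This is the hard step: a~priori such a weak* limit is only guaranteed to be a Gibbs $u$-state, since absolute continuity along $cu$-plaques and positivity of central Lyapunov exponents may degenerate in the limit. One overcomes this by verifying the new characterization of Gibbs $cu$-states --- whose defining condition should be extractable from uniform lower bounds on the frequency of hyperbolic times along the approximating sequence --- using the new Pliss-like lemma to supply precisely these uniform bounds, so the characterization survives the weak* limit and certifies $\nu$ as a Gibbs $cu$-state of~$f$.

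Combining these two steps, every weak* subsequential limit of physical measures $\mu_{g_n}$ with $g_n \to f$ is a Gibbs $cu$-state of $f$, hence equals $\mu_f$, which gives continuity whenever uniqueness holds. To obtain uniqueness in a smaller neighborhood $\cU \subset \cU_0$, one upgrades the upper semi-continuity statement to the \emph{number} of ergodic Gibbs $cu$-states, in the spirit of \cite{A2010,DVY2016} for mostly contracting systems: each ergodic Gibbs $cu$-state of $g$ admits a skeleton of $cu$-disks with strongly hyperbolic behaviour that persists under $C^r$ perturbation, so two distinct physical measures for some $g$ close to $f$ would yield two distinct ergodic Gibbs $cu$-states of $f$, contradicting the uniqueness established above. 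The principal obstacle throughout is the second step, namely transferring the $cu$-Gibbs property across a weak* limit of diffeomorphisms; this is the raison d'être of both new tools highlighted in the abstract.
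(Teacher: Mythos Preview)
Your overall architecture matches the paper's: Theorem~\ref{one pm} is obtained as a corollary of the upper semi-continuity result (Theorem~\ref{several pm}), whose engine is Theorem~\ref{cu is closed} (weak* limits of Gibbs $cu$-states are Gibbs $cu$-states), and the latter is indeed proved using exactly the two ingredients you name --- the Pliss-like Lemma~\ref{Pliss-like lemma} (yielding Lemma~\ref{large pesin blocks}) and the characterization Theorem~\ref{characterization}. So your identification of the ``hard step'' and its resolution is on target.

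Two points of divergence are worth flagging. First, your uniqueness argument for $f$ itself is not the one the paper uses, and the justification you sketch is shaky: transitivity of $f$ does \emph{not} by itself imply that the support of an ergodic Gibbs $cu$-state contains a dense unstable leaf, nor that its support is all of $M$ (closed invariant $u$-saturated sets of a transitive diffeomorphism can be proper). The paper instead invokes \cite[Lemma~4.5]{AV2018}: for a mostly expanding diffeomorphism each basin is, modulo a Lebesgue null set, \emph{open}; since basins are disjoint, invariant, and jointly full, transitivity forces a single basin. Your subsequent ``basin argument'' sentence is closer to this, but you should drop the full-support claim and make the essential openness of basins explicit.

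Second, for the upper semi-continuity of the \emph{number} of physical measures you gesture at a ``skeleton of $cu$-disks'' persistence argument \`a la \cite{DVY2016}. The paper's actual mechanism is different and more concrete: one assumes $|I|=|J|+1$ physical measures along a sequence $f_n\to f$, writes each limit as $\sum_j \alpha_{i,j}\mu^j$, and shows via a basin-overlap argument (using uniform-size Pesin unstable disks on $\Lambda_\ell(f_n)$ and absolute continuity of the stable foliation) that each column of $(\alpha_{i,j})$ has at most one positive entry, contradicting the row-stochastic property. Your sketch is in the right spirit but does not capture this combinatorial step.
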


To our knowledge, there are no known examples of mostly expanding diffeomorphisms with more than one physical measure. Yet we can show that if such examples exist (and we believe they do), then their physical measures behave just like in the mostly contracting case:

\begin{maintheorem}\label{several pm}
Let $f:M \to M$ be a $C^r$, $r>1$, partially hyperbolic diffeomorphism of type $TM = E^s \oplus E^c \oplus E^u$ (not necessarily transitive) such that every Gibbs $u$-state has positive central Lyapunov exponents. Then the number of physical measures depends upper semi-continuously on $g$ and physical measures vary continuously in the weak* topology on any subset $\cC \subset \cU$ on which the number of physical measures is constant.
\end{maintheorem}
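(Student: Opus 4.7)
\noindent\textit{Proof plan.} Apply Theorem~\ref{MACA1teoC} to obtain a $C^r$-neighborhood $\cU$ of $f$ on which every $g$ is mostly expanding and admits a finite set of physical measures whose basins cover a full Lebesgue measure set. For $g\in\cU$, the physical measures coincide with the ergodic Gibbs $cu$-states, so the task reduces to tracking how ergodic Gibbs $cu$-states vary under $C^r$ perturbations.

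The first ingredient is that the set-valued map $g\mapsto \mathcal{G}^{cu}(g)$ (the set of Gibbs $cu$-states) is weak* upper semi-continuous: any weak* limit of $\mu_n\in\mathcal{G}^{cu}(g_n)$, with $g_n\to g$ in $\cU$, lies in $\mathcal{G}^{cu}(g)$. The new characterization of Gibbs $cu$-states announced in the abstract is designed so that the defining condition passes to weak* limits under $C^r$-perturbations of the dynamics. Since on $\cU$ every Gibbs $cu$-state decomposes ergodically into physical measures, any accumulation point of physical measures of $g_n$ is a convex combination of physical measures of $g$.

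The second, and harder, ingredient is an isolation property: for every physical measure $\nu$ of $g\in\cU$, there exist a weak* neighborhood $V$ of $\nu$ and a $C^r$-neighborhood $\mathcal{V}$ of $g$ such that, for each $g'\in\mathcal{V}$, at most one physical measure of $g'$ lies in $V$. The proof uses the new Pliss-like lemma to obtain a uniform lower bound on the frequency of hyperbolic times for points supporting Gibbs $cu$-states; this produces cu-disks carrying robust absolutely continuous disintegrations whose densities vary continuously with $g$. Two distinct ergodic Gibbs $cu$-states cannot share such a common geometric structure and still be weak* close, yielding the desired local uniqueness.

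Granted these ingredients, upper semi-continuity of the number of physical measures follows by a standard extraction: passing to subsequences along $g_n\to g$ with $k_0$ physical measures $\mu_n^1,\ldots,\mu_n^{k_0}$ of $g_n$ and $\mu_n^i\to\mu^i$, the isolation step forces each $\mu^i$ to be a single physical measure $\nu^{j(i)}$ of $g$ and the assignment $i\mapsto j(i)$ to be injective (otherwise, for large $n$, two distinct $\mu_n^{i_1},\mu_n^{i_2}$ would lie in a common isolating neighborhood, contradicting isolation). Hence $k_0$ is bounded above by the number of physical measures of $g$. Continuity on a subset $\cC\subset\cU$ where the number of physical measures is constant follows because this injection becomes a bijection, so every subsequence of $\{\mu_n^i\}$ has a further subsequence converging to a fixed physical measure of $g$, and the full sequence therefore converges. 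The main obstacle is establishing the isolation property of the third paragraph.
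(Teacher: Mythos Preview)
Your outline correctly identifies the first ingredient (upper semi-continuity of $g\mapsto\Gcu(g)$, i.e.\ Theorem~\ref{cu is closed}) and the conclusion that any weak* limit of physical measures of $g_n$ is a convex combination of physical measures of $g$. The gap is in how you pass from this to the counting bound.

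Your isolation property, as stated, says: around each physical measure $\nu$ of $g$ there is a weak* neighborhood containing at most one physical measure of any nearby $g'$. But this does \emph{not} force a limit $\mu^i=\lim_n\mu_n^i$ to be ergodic. Consider the scenario where $g$ has physical measures $\nu^1,\nu^2$ and $g_n$ has three, with $\mu_n^1\to\nu^1$, $\mu_n^2\to\nu^2$, and $\mu_n^3\to\tfrac12(\nu^1+\nu^2)$. Your isolating neighborhoods of $\nu^1$ and $\nu^2$ each contain exactly one $\mu_n^i$, so isolation is not violated; yet $\mu_n^3$ sits near neither and the injection $i\mapsto j(i)$ is undefined for $i=3$. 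Your argument therefore does not exclude this configuration, and the step ``the isolation step forces each $\mu^i$ to be a single physical measure'' is unjustified.

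The paper's proof avoids this by never claiming the limits are ergodic. Instead it writes $\mu_n^i\to\sum_j\alpha_{i,j}\mu^j$ and proves a sharper geometric fact: each \emph{column} of the matrix $(\alpha_{i,j})$ has at most one positive entry. The mechanism is basin disjointness. If $\alpha_{i,j}>0$ and $\alpha_{i',j}>0$ for $i\neq i'$, then both $\nu_n^i$ and $\nu_n^{i'}$ charge a fixed ball $B_{\rho/2}(x_{j,k})$ near $\supp\mu^j$; using the uniform Pesin block $\Lambda_\ell(f_n)$ (which is where Lemma~\ref{Pliss-like lemma} enters, via Lemma~\ref{large pesin blocks}) one finds a $cu$-disk through a point of this ball on which $B(\nu_n^i)$ has full leaf volume, and by absolute continuity of the stable foliation $B(\nu_n^i)$ has full Lebesgue measure in $B_{\rho/2}(x_{j,k})$. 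The same ball then cannot meet $B(\nu_n^{i'})$ in positive measure, contradicting $\alpha_{i',j}>0$. With $|I|=|J|+1$ rows summing to one and each column having at most one positive entry, some row must vanish---a contradiction. When $|I|=|J|$ the matrix is forced to be a permutation, which gives continuity.

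So the geometric core (uniform $cu$-disks from large Pesin blocks, basin saturation via the stable foliation) is close to what you gesture at, but the logical packaging must be the column-wise matrix argument rather than a weak*-neighborhood isolation statement.
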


Theorem~\ref{one pm} is  in fact a corollary of Theorem~\ref{several pm}, but it is by far the case of greatest interest and therefore deserves to be in the spotlight.

In spite of the strong analogy between the notions of mostly expanding and mostly contracting, there is at least one important difference between the two. In the former case,   the basin of a physical measure is an open set, modulo a Lebesgue null set \cite[Lemma 4.5]{AV2018}. That is very different from the case of mostly contracting, in which one may have the phenomenon of intermingled basins of attraction \cite{K1994}. It also means that, for mostly expanding diffeomorphisms, transitivity is sufficient to guarantee uniqueness of the physical measure.

Some comments on terminology are pertinent. By statistical stability one usually means a situation where all physical measures persist and vary continuously with small perturbations on the dynamics. One can therefore say that a diffeomorphism satisfying the hypotheses of Theorem~\ref{one pm} is statistically stable. For the situation in Theorem~\ref{several pm} one can likewise talk about statistical stability within some suitable parameter space (in which the number of physical measures remain constant). On the other hand, it is possible to weaken the notion of statistical stability, saying that a diffeomorphism $f:M \to M$ is $C^r$-\emph{weakly statistically stable} if, given any neighborhood $U$ of the closed convex hull of the physical measures of $f$,  there exists a $C^r$ neighborhood $\mathcal{U}$ of $f$ such that, given any $g \in \cU$, every physical measure of $g$ belongs to $U$. To appreciate the difference between statistical stability and weak statistical stability, it is instructive to look at the examples from \cite[Section 3.3]{DVY2016}. There,  the authors produce examples of mostly contracting diffeomorphisms for which the number of physical measures decreases under arbitrarily small perturbations. The closed convex hull of the physical measures of the perturbed system lie in a neighborhood of a proper subsymplex of the closed convex hull of the physical measures of the unperturbed system. The unperturbed system is therefore weakly statistically stable but not statistically stable.

\begin{maintheorem} \label{weak stability}
Let $f:M \to M$ be a $C^r$, $r>1$, partially hyperbolic diffeomorphism of type $TM = E^s \oplus E^c \oplus E^u$ such that every Gibbs $u$-state has positive central Lyapunov exponents. Then $f$ is weakly statistically stable.
\end{maintheorem}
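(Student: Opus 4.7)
The plan is to prove Theorem~\ref{weak stability} by a compactness-and-contradiction argument, building on the framework behind Theorems~\ref{one pm} and~\ref{several pm}. Writing $K$ for the closed convex hull of the (finitely many) physical measures of $f$ and fixing a weak* neighborhood $U \supset K$, it is enough to show that whenever $g_n \to f$ in the $C^r$ topology and $\mu_n$ is a physical measure of $g_n$, then $\mu_n$ eventually belongs to $U$. Suppose not; extracting a weak* convergent subsequence $\mu_{n_k} \to \mu$ with $\mu \notin U$, the contradiction will come from proving $\mu \in K$.

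To verify $\mu \in K$ I would proceed in three steps. First, every physical measure of a partially hyperbolic diffeomorphism is a Gibbs $u$-state: this is the standard fact that, since the basin of a physical measure must be essentially saturated by strong unstable plaques, the disintegration along the strong unstable partition is absolutely continuous with respect to Lebesgue on each plaque. Second, the set-valued map $g \mapsto \Gu(g)$ is upper semi-continuous in the $C^r$ topology for $r>1$, a classical consequence of the uniform bounded distortion of unstable Jacobians along strong unstable plaques. Taken together, these two facts imply that the limit $\mu$ is a Gibbs $u$-state of $f$.

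Third, since the mostly expanding condition is $C^r$-open (Theorem~\ref{MACA1teoC}) and holds at $f$, the Gibbs $u$-state $\mu$ of $f$ must have all central Lyapunov exponents positive. I would then invoke the new characterization of Gibbs $cu$-states announced in the abstract to promote $\mu$ to a Gibbs $cu$-state of $f$, and ergodically decompose it. Each ergodic component is again a Gibbs $cu$-state with positive central exponents, and by the argument supporting Theorem~\ref{MACA1teoC} (see \cite{AV2018}) any such ergodic measure is a physical measure of $f$. Consequently $\mu$ is a convex combination of ergodic physical measures, i.e.\ $\mu \in K$, contradicting $\mu \notin U$.

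The main obstacle is the third step: showing that an arbitrary Gibbs $u$-state of $f$ (obtained only as a weak* accumulation point, with no a priori regularity in the center direction) decomposes into ergodic physical measures. This is precisely where the new Gibbs $cu$-state characterization and the new Pliss-type lemma from the abstract should do the heavy lifting, by allowing one to upgrade positivity of central exponents for $\mu$ into genuine $cu$-absolute continuity of its ergodic components. Once this bridge is established, the remainder of the proof is a straightforward application of weak* compactness together with upper semi-continuity of $\Gu(\cdot)$.
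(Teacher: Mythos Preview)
Your overall architecture is right, but there is a genuine gap in the third step, and it is not the one you flag as ``the main obstacle''. You pass to the limit only at the level of Gibbs $u$-states, obtaining that $\mu \in \Gu(f)$ has positive central Lyapunov exponents, and then propose to ``promote'' $\mu$ to a Gibbs $cu$-state using the characterization in Theorem~\ref{characterization}. That promotion is not available: a Gibbs $u$-state with positive central exponents need \emph{not} be a Gibbs $cu$-state (the paper states this explicitly just before Proposition~\ref{prop:cuM2}, citing a counterexample from \cite{AV2018}). The characterization in Theorem~\ref{characterization} is an equivalence, not an implication from ``$u$-state with positive exponents'' to ``$cu$-state''; it gives you nothing unless you already know the inequality \eqref{characterizationinequality} holds for $\mu$, and there is no mechanism to deduce that from $\mu \in \Gu(f)$ alone.

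The paper's proof avoids this by working one level up from the start. Since the $g_n$ are eventually mostly expanding, Proposition~\ref{pm vs cu} says each physical measure $\mu_n$ is an ergodic Gibbs $cu$-state of $g_n$, not merely a Gibbs $u$-state. One then applies Theorem~\ref{cu is closed} (upper semi-continuity of $g \mapsto \Gcu(g)$), which is exactly where the Pliss-like Lemma and the characterization are used: they show that the inequality \eqref{characterizationinequality} for the $\mu_n$ passes to the limit, yielding $\mu \in \Gcu(f)$ directly. After that, Proposition~\ref{prop:cuM2} and Proposition~\ref{pm vs cu} give the ergodic decomposition into physical measures of $f$, and you are done. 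In short, the heavy lifting happens \emph{before} taking the limit (establishing $\mu_n \in \Gcu(g_n)$ and the uniform constants in Theorem~\ref{characterization}), not after.
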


In a recent paper \cite{Y2016}, Yang proved that having positive central Lyapunov exponents with respect to every Gibbs $u$-state is a $C^1$ open property. It is therefore natural to ask whether the physical measures vary continuously with the dynamics in the $C^1$ topology. We do not know.

In another recent work \cite{MCY2017}  the authors prove  existence and finiteness of physical measures for partially hyperbolic diffeomorphisms $f$ with dominated splitting $ TM=E^{cs}\oplus E^{cu}\oplus E^u$ (with uniform expansion in $E^u$) satisfying a mixture of {\em mostly contracting} and mostly expanding behavior. That is, every Gibbs $u$-state has positive Lyapunov exponents in the $E^{cu}$  bundle (i.e. $f$ is  mostly expanding along $E^{cu}$) and negative Lyapunov exponents in the $E^{cs}$ bundle (i.e. $f$ is mostly contracting along $E^{cs}$).  All results in the present work can be extended to the setting in \cite{MCY2017} using our results and following the ideas in the proof of  Theorem C in \cite{V2007}.   

There are several works on statistical stability related to our current setting, most notably \cite{A2000, AV2002, A2004} dealing with classes of non-invertible systems with non-uniformly expanding behavior. In \cite{A2000} the author proves the existence of measures absolutely continuous with respect to Lebesgue and then, \cite{AV2002, A2004} the authors give sufficient conditions to obtain an stronger version of statistical stability:  continuous variation (in $L^1$ norm) of densities of absolutely continuous invariant measures. Those works rely on techiques rather different from ours, namely the Ruelle transfer operator  and an induced piecewise expanding Markovian map. The latter approach, inspired by the works \cite{LSY1998, LSY1999}, consists of considering a disc $\Delta\subseteq M$  and a induced first return map $F_f:\Delta\to\Delta$. If  $F_f$ is piecewise expanding, then it has an invariant absolutely continuous measure $\mu_F$. If, in addition, the return times $R_f:\Delta \to \mathbb{N}$ of $f$ to $\Delta$ are integrable with respect to Lebesgue measure, then 
$$\mu_f=\sum_{j=0}^\infty f^j_*(\mu_F|\{R_f>j\})$$
is an absolutely continuous $f$-invariant finite measure. In order to obtain statistical stability, the author imposes a condition of uniformity (with respect to $f$) on the integral (with respect to Lebesgue) of the tail: given $\varepsilon>0$, there is $N=N(\varepsilon)\in\mathbb{N}$ such that for every $g$ close enough of $f$:
$$\sum_{j=N}^\infty\mathrm{Leb}(\{R_g>j\})<\epsilon.$$
A similar approach (and uniform condition on the tail) is used in \cite{ACF2010}. It is not clear to what generality such uniform tail conditions hold. All works cited above apply to robust classes of maps where the uniformity of tail behavior is implicit in the calculations that shows that they are non-uniformly expanding in the first place. Yet there doesn't seem to be any good reason why it would be intrinsic to non-uniform expansion, or that one can find a universal mechanism that gives rise to it.

In the setting of partially hyperbolic diffeomorphisms, a similar result was proved in \cite{V2007}. Briefly speaking,  consider a partially hyperbolic diffeomorphisms $f:M\to M$ with a $Df$-invariant splitting $TM=E^s\oplus E^c$ and satisfying the  NUE-condition on the center direction (here $E^{u}=\{0\}$ is taken trivial). In this case, as showed in  \cite{ABV2000}, there exists a finite number  of physical measures.  The author considers a sequence $f_n$  converging to $f$ in the $C^r$ topology, $r>1$. Since partially hyperbolicity is an open condition  (and the bundles vary continuously with the diffeomorphism), each $f_n$ also has a splitting of the tangent bundle $TM=E^s_n\oplus E^c_n$, $n\geq 1$. Assuming that $f_n$  satisfies a uniform  NUE-condition on the center direction, it is posible to choose a sequence $\mu_n$ of physical measures of each $f_n$ and consider an accumulation point $\mu$. Restricted to a ``fixed bounded cylinder'', the author proves  that $\mu$ is the sum of measures $\nu + \eta$, with $\nu$ non-zero, such that $\nu$ is a combination of physical measures of $f$ and $\eta$ is  like a ``singular rest''. This is done using a construction from \cite{ABV2000}, in which one shows that  $\mu_n$ can be approximated inside a ``bounded cylinder'' by a sum  $\nu_n + \eta_n$ with the mass $|\nu_n|$  of $\mu_n$ bounded away from zero (uniformly with respect to $n$), and such that $\nu_n$ has a disintegration along center-unstable manifolds with uniform bounds on the densities of its conditional measures. Therefore $\nu_n$ accumulates (in the cylinder) on a measure $\nu$ with the same properties and then it is in the convex hull of physical measure. By tacitly assuming uniformity of tail behavior of return maps to the cylinder, the author concludes that $\eta_n\to 0$ and  so  $\mu=\nu$. 
 
Our main finding in this work is that, by replacing the NUE-condition by the slightly stronger mostly expanding condition, we are able to rid ourselves completely of any assumptions about tail behavior. In fact, our strategy is essentially the same as in \cite{V2007}, but with the important improvement that $|\nu_n|$ can be taken to be not only bounded away from zero, but arbitrarily close to one.  The magic occurs because the following new version of the classical Pliss Lemma:

\begin{mainlemma}[{\bf Pliss-Like Lemma}]\label{Pliss-like lemma}
Let $L<\gamma < \Gamma$ and suppose that $a_1, \ldots a_N$ are numbers such that  $a_i\geq L$ for every $1\leq i \leq N$. Let $\kappa>0$ be a number such that 
\begin{equation*}
\#\{i\in \{1, \ldots, N\}: a_i < \Gamma \} \leq \kappa N
\end{equation*} 
and write $\theta =1- \kappa \frac{\Gamma-L}{\Gamma - \gamma}$. Then there exist $1 < n_1 < n_2< \ldots < n_m \leq N$, with $m \geq \theta N$, such that 
\begin{equation}\label{pliss condition}
\sum_{j=n+1}^{n_i} a_j \geq \gamma(n_i-n)
\end{equation}
for every $1 \leq i \leq m$ and every $0 \leq n <n_i$.
\end{mainlemma}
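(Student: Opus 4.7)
The plan is to recast the statement in terms of the partial sums $S_k := \sum_{j=1}^k (a_j - \gamma)$, with $S_0 = 0$. The Pliss inequality \eqref{pliss condition} is equivalent to $S_{n_i} \geq S_n$ for every $0 \leq n < n_i$, so $k$ is a Pliss time exactly when $S_k \geq M_{k-1}$, where $M_k := \max_{0 \leq j \leq k} S_j$ is the running maximum. The goal becomes counting these indices.

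First I would reduce to a sequence taking only two values, by replacing $a_i$ by $L$ whenever $a_i < \Gamma$, and by $\Gamma$ otherwise. Writing $a_i'$ and $S_k'$ for the new sequence and partial sums, one has $a_i' \leq a_i$ pointwise, hence $S_k - S_n \geq S_k' - S_n'$ whenever $n < k$. Consequently every Pliss time of $(a_i')$ is a Pliss time of $(a_i)$, so it suffices to prove the lemma assuming $a_i \in \{L, \Gamma\}$ for all $i$. Let $p$ denote the number of indices with $a_i = L$; by hypothesis $p \leq \kappa N$. Set $\alpha := \Gamma - \gamma > 0$ and $\beta := \gamma - L > 0$, so each step changes $S$ by either $+\alpha$ (an ``up-step'') or $-\beta$ (a ``down-step'').

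The main mechanism is a potential-function estimate on the debt $d_k := M_k - S_k \geq 0$. At every down-step $S$ strictly decreases while $M$ is unchanged, so the step is automatically non-Pliss and $d$ jumps by $+\beta$. At an up-step, the index is Pliss precisely when $d_{k-1} \leq \alpha$: in that case $S_k$ attains the running maximum, the debt drops to $0$, and $d$ decreases by $d_{k-1} \in [0,\alpha]$; otherwise $M$ stays put and $d$ decreases by exactly $\alpha$. Summing all these increments and using $d_N \geq 0 = d_0$ yields
$$
p\beta \;\geq\; \alpha \cdot \#\{\text{non-Pliss up-steps}\} \;+\; \sum_{\text{Pliss up-steps}} d_{k-1},
$$
so the number of non-Pliss up-steps is at most $p\beta/\alpha$. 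Since every Pliss time must be an up-step, the count of Pliss times is at least
$$
N - p - \frac{p\beta}{\alpha} \;=\; N - p\,\frac{\Gamma - L}{\Gamma - \gamma} \;\geq\; \Bigl(1 - \kappa\,\frac{\Gamma - L}{\Gamma - \gamma}\Bigr) N \;=\; \theta N.
$$

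I do not foresee any serious obstacle. The two essential ingredients are the pointwise reduction to a $\pm$-step walk, which cleans up the combinatorics, and the debt/potential argument, which converts the hypothesis $p \leq \kappa N$ directly into the desired lower bound on the number of Pliss times.
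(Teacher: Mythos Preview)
Your argument is correct. Both you and the paper recast the Pliss condition via the partial sums $S_k=\sum_{j=1}^k(a_j-\gamma)$, but from there the routes diverge. The paper works with the original sequence: it decomposes the complement of the Pliss set into maximal intervals $I_\alpha$, observes that on each such interval the average of the $a_i$ is below $\gamma$, and then bounds each $a_i$ from below by $L$ or $\Gamma$ according to membership in $F=\{i:a_i<\Gamma\}$. Summing over the intervals yields $(\Gamma-L)|F|\geq(\Gamma-\gamma)(N-m)$, hence $m\geq\theta N$. Your approach instead first reduces to a two-valued sequence by the monotonicity observation $a_i'\leq a_i\Rightarrow$ every Pliss time of $(a_i')$ is a Pliss time of $(a_i)$, and then runs a clean potential (``debt'') argument on the resulting $\{+\alpha,-\beta\}$ walk. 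The reduction buys you a combinatorially transparent situation where each step type has a fixed effect on $d_k=M_k-S_k$, and the telescoping inequality $d_N\geq 0$ immediately bounds the number of non-Pliss up-steps by $p\beta/\alpha$. The paper's interval decomposition, by contrast, stays closer to the classical Pliss--Ma\~n\'e proof and never needs to modify the sequence. Both yield the identical constant $\theta=1-\kappa\frac{\Gamma-L}{\Gamma-\gamma}$.
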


Both the Classical Pliss Lemma \cite{P1972} (see also \cite[Lemma 11.8]{M1987}  or \cite[Lemma 3.1]{ABV2000}) and Lemma~\ref{Pliss-like lemma} estimate how often (frequency $\theta$) certain partial averages of the finite sequence $a_i$ are close to its total average. In the Classical Pliss Lemma, $\theta$ is a function of an upper bound of the $a_i$ and its average. Unless these become close, $\theta$ cannot be taken close to $1$. In contrast, Lemma~\ref{Pliss-like lemma} considers a lower bound of the $a_i$ and a number $\Gamma$ slightly smaller than the average so that a good fraction ($1-\kappa$) of the numbers $a_i$ are larger than $\Gamma$. If $\kappa$ is small, then $\theta$ is close to one. That is the case whenever the sequence $a_i$ has small variation (most of its members are close to its mean) and the lower bound is not too far from the mean. In particular, it applies to Birkhoff sums of observables of the form 
\[ \varphi(x) = \frac{1}{\ell} \int \log \| Df^\ell \vert E_x^{c} \| d\mu,\]
where $f$ is a partially hyperbolic diffeomorphism, $\mu$ is an ergodic $f$-invariant measure, and $\ell$ is some large integer. Indeed,  Pliss' Lemma  has been intimately linked with the so called ``hyperbolic times''  introduced by Alves in \cite{A2000}. Loosely speaking, hyperbolic times are are numbers (times) associated to point whose orbit has non-uniformly hyperbolic behaviour. These times corresponding to iterates along the orbit for which the non-uniform hyperbolicity behaves as uniform hyperbolicity for large chunks of the orbit. Pliss' Lemma guarantees that an orbit on which a diffeomorphism is, say, asymptotically expanding in some direction, will have hyperbolic times on a set of iterates that correspond to positive frequency. Many of the difficulties related to hyperbolic times arise from the fact that the frequency of hyperbolic times provided by the Pliss' Lemma is only positive, but not necessarily close to one. This is in fact the main difficulty in the current work, and we overcome it by replacing the Pliss' Lemma by Lemma~\ref{Pliss-like lemma} and applying it to iterates of the diffeomorphism under consideration.

Another important ingredient are Gibbs $cu$-states and its properties. They were introduced in \cite{ABV2000} using the fact that, in the presence of positive Lyapunov exponents, there are Pesin invariant unstable manifolds. Thus Gibbs $cu$-states are a non-uniform version of  Gibbs $u$-states: While  the definition of Gibbs $u$-states involves the strong unstable foliation, we say that an invariant probability measure $\mu$ is a {\em Gibbs $cu$-state} if the conditional measures of $\mu$ along the corresponding local {\em unstable Pesin's manifolds} are almost everywhere absolutely continuous with respect to Lebesgue measure on these manifolds.  Gibbs $cu$-states are natural candidates to be physical measures. In fact, in our setting, a measure is physical if and only if  its is an ergodic Gibbs $cu$-state (see \cite[Lemma 4.4]{AV2018}). In Section~\ref{ssec:PH} we provide more details about the properties of  Gibbs $cu$-states, their relation to physical measures, and a complete toolbox to be used in our proof.

Here is an outline of our arguments.

\begin{enumerate}

\item Compactness of the set of Gibbs $u$-states (Proposition~\ref{prop:uM7}) provides us with uniform bounds, in a robust fashion, on the Lyapunov exponents of these (see \eqref{sigma}).

\item We use  Lemma~\ref{Pliss-like lemma}  to prove that an iterate of a mostly expanding diffeomorphism has hyperbolic times with frequency arbitrarily close to one (see Proposition~\ref{density of hyperbolic times}). This is a considerable improvement on the positive but possibly small frequency of hyperbolic times used in most arguments with a similar flavor. 

\item The abundance of hyperbolic times given by our Pliss-like Lemma is used to prove that, in our setting, limits of Gibbs $cu$-states are Gibbs $cu$-states (Theorem ~\ref{cu is closed}). This convergence is tricky to prove rigorously. We overcome this difficulty by introducing a useful characterization of Gibbs $cu$-states which does not directly involve disintegration of the measure (Theorem~\ref{characterization}).

\item Ergodic Gibbs $cu$-states are physical measures (Proposition~\ref{pm vs cu}).

\item Finally, distinct ergodic Gibbs $cu$-states cannot get too close to each others; therefore they must either stay apart or collapse into one ergodic Gibbs $cu$-states. This gives upper semi-continuity (see Section~\ref{proof several pm}). 
\end{enumerate}

As already mentioned above, Theorem~\ref{one pm} can be applied to a number of important examples. Certain Derived from Anosov diffeomorphisms, like the described in \cite[Section 6]{AV2018}, are mostly expanding with a unique physical measure, and therefore statistically stable.  Also Theorem~\ref{one pm} can be applied to generic $C^\infty$ perturbation $f$ of the time one map of a hyperbolic geodesic flow on a surface $M$. In this case, it was proved in \cite{D2003} that either $f$ of its inverse $f^{-1}$ is mostly expanding and so our results can be applied.  

Other family of examples where  our theorems are applied  are the examples provided by \cite[Theorem 1]{SW2000} . In this case, it  was proved in \cite{Y2016}  that if $f$ is  a $C^r$, $r>1$ accessible, volume preserving, partially hyperbolic diffeomorphism with one-dimensional center and the center exponent (with respect  to the volume measure) is positive, then it is mostly expanding.

%%%%%%%%%%%%%%%%%%%%%%%%%%%%%%%%%%%%%%%%%%%%%%%%%%%%%%%%%%%%%%%%%%%%%%%%%%%%%

\section{Some background}\label{sec:preliminaries}

\subsection{Dominated splitting and partial hyperbolicity}\label{ssec:PH}

Let $M$ be a  closed Riemannian manifold. 
We denote by $\|\cdot\|$ the norm obtained from the Riemannian structure 
and by $m$ the normalized volume measure on $M$ induced by the Riemannian structure. We often refer to $m$ as ``the Lebesgue measure on $M$". Moreover, if $D$ is a submanifold of $M$ we denote by $\vol_D$ the volume measure on $D$ induced by the Riemannian structure and by $m_D$ its normalization, i.e. $m_D=\vol_D /|\vol_D |$.

A diffeomorphism $f\::\:M\to M$  has a {\em dominated splitting} $F \prec G$ if there is a $Df$-invariant decomposition $TM= F\oplus G$ into complementary subbundles of $TM$ of constant dimensions, and $N \geq 1$ such that
\begin{equation*}\label{dd}
\|Df^N|F_x\|\cdot\|Df^{-N}|G_{f^N(x)}\| <1
\end{equation*}
for every $x\in M$. Any such splitting is necessarily continuous.

A diffeomorphism $f\colon M\rightarrow M$  is {\em partially hyperbolic} if there exists a  continuous $Df$-invariant splitting  $$T M=E^s\oplus E^c\oplus E^u,$$ such that $E^s \prec(E^c \oplus E^u)$ and $(E^s \oplus E^c) \prec E^u $ are both dominated splittings and, moreover, there exists $N \geq 1$ such that 
$\| Df^N \vert E^s \| <1$ and $\|Df^{-N} \vert E^u \| < 1$. We always assume that $\dim E^\sigma\geq1$, $\sigma=s,c,u$ unless stated otherwise. 

We denote by $\PH^{r}(M)$, $r\geq 1$, the set of $C^r$ partially hyperbolic diffeomorphisms defined on $M$. The set $\PH^{r}(M)$ is open in the $C^r$ topology and the bundles vary continuously with the diffeomorphism \cite[Corollary 2.17]{HP2006}. For partially hyperbolic diffeomorphisms, it is a well-known fact that there are foliations $\mathcal{F}^\sigma$ tangent to the distributions $E^\sigma$ for $\sigma=s,u$ \cite{HPS1977}. The leaf of $\mathcal{F}^{\sigma}$ containing $x$ will be called $W^{\sigma}(x)$, for $\sigma=s,u$.

\subsection{Gibbs $u$-states, Gibbs $cu$-states, and physical measures}\label{measure section}

In the following $f\colon M\rightarrow M$  is a $C^r$, $r>1$, partially hyperbolic diffeomorphism with $Df$-invariant splitting  $$T M=E^s\oplus E^c\oplus E^u.$$

Denote by  $\mathbb{M}^1(M)$ the set of Borel probability measures defined on $M$ provided with the weak* topology. We denote by $\mathbb{M}_f^1(M)$
the set of $f$-invariant probability measures. It is well known that $\mathbb{M}_f^1(M)$ is non empty, convex, closed (and so compact) subset of $\mathbb{M}^1(M)$.

\vspace{.5cm}
\noindent\textbf{Notation:} Throughout this work, the ``Riemannian topology'' of $M$, the $C^r$ topology on $\textrm{Diff}^r(M)$ and  the weak* topology on $\mathbb{M}^1(M)$ will be envolved. To avoid confusion we use 
\begin{itemize}
\item capital letters  $U, V,...$ to denote  subsets   of $M$,
\item caligraficral capital letters   $\mathcal{U}, \mathcal{V},...$ to denote subsets  of $\textrm{Diff}^r(M)$, 
\item board capital letters $\mathbb{U}, \mathbb{V},...$ to denote subsets of $\M^1(M)$, and 
\item boldface capital letters $\mathbf{U}, \mathbf{V}, \ldots$ to denote subsets of $\textrm{Diff}^r(M) \times \M^1(M)$.

\end{itemize}
\vspace{.5cm}

A measure   $\mu\in \mathbb{M}_f^1(M)$ is a {\em Gibbs $u$-state} if the conditional measures of $\mu$ with respect to the partition into local strong-unstable manifolds are absolutely continuous with respect to Lebesgue measure along the corresponding local strong-unstable manifold.  

We denote by $\Gu(f)$ the subset of  Gibbs $u$-states for $f$.  If $\mathcal{U}\subseteq \PH^r(M)$, we denote by
$$
\Gu(\mathcal{U}) := \{(g,\mu) : g \in \mathcal{U} \text{ and }\mu \in \Gu(g)\}
$$

For future reference, we list some relevant properties of Gibbs $u$-states. 

In the follows, if $g:M\to M$ is Borel measurable and $\nu$ is a Borel probability measure defined on $M$,  $g_*\nu$ denotes the Borel  probability measure  $\nu\circ g^{-1}$.

\begin{prop}\label{prop:uM1}{\bf[Pesin, Sinai; \cite{PS1982}]} If $f$ is a $C^r$ partially hyperbolic diffeomorphism, $r>1$, then there exists a Gibbs $u$-state. More precisely,  if $D$ is a disk of dimension $\dim (E^u)$ inside a strong unstable leaf, then every accumulation point of the sequence of probability measures 
$$\mu_n=\frac1n\sum_{k=0}^{n-1}f_*^k m_D $$
is a Gibbs $u$-state with densities with respect to the volume measure along the strong unstable leaves satisfying
\begin{equation}\label{eq:udensity}
\frac{\rho(x)}{\rho(y)} = \prod_{n=0}^{\infty} \frac{\det( Df^{-1} \vert E_{f^{-n}(x)}^{u})}{\det( Df^{-1} \vert E_{f^{-n}(y)}^{u})}.
\end{equation}
for any points $x,y$ in the same unstable plaque. As a consequence, the density $\rho$  along the strong unstable leaves are uniformly bounded away from zero and infinity.
\end{prop}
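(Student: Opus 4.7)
\medskip

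The plan is to construct Gibbs $u$-states explicitly via the prescribed Cesàro averages and verify the density formula. The three ingredients are (i) bounded distortion along unstable manifolds, (ii) Krylov–Bogolyubov compactness, and (iii) Rokhlin disintegration to identify the conditional measures of the limit.

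First, I would establish the \emph{bounded distortion} estimate along unstable plaques. Because $f$ is $C^r$ with $r>1$ and $E^u$ is an invariant bundle with exponential backward contraction, the function $x \mapsto \log\det(Df^{-1}|E^u_x)$ is $\alpha$-Hölder along unstable leaves for some $\alpha \in (0,1]$ (this uses Hölder regularity of $E^u$ over the unstable foliation, standard for $C^{1+\alpha}$ partially hyperbolic systems). Combined with the estimate $\operatorname{dist}(f^{-n}(x),f^{-n}(y)) \leq C\lambda^n \operatorname{dist}(x,y)$ for $x,y$ in a common local unstable plaque and some $\lambda < 1$, the infinite product in \eqref{eq:udensity} converges uniformly on such plaques, and its value is bounded above and below by constants $C^{\pm 1} > 0$ depending only on $f$ and the plaque diameter. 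In particular the partial products $\prod_{n=0}^{k-1} \det(Df^{-1}|E^u_{f^{-n}(x)})/\det(Df^{-1}|E^u_{f^{-n}(y)})$ are uniformly controlled for all $k$.

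Next, I would analyze the averages $\mu_n = \frac{1}{n}\sum_{k=0}^{n-1} f^k_* m_D$. Each $f^k_* m_D$ is supported on the disk $f^k(D)$, which lies inside a strong unstable leaf, and its density with respect to $\vol_{f^k(D)}$ equals $|\vol(D)|^{-1} |\det(Df^k|E^u_{f^{-k}(\cdot)})|^{-1}$. By Step~1, the ratio of this density at two points belonging to a common local unstable plaque is bounded independently of $k$. By weak-$*$ compactness of $\mathbb M^1(M)$, the sequence $\mu_n$ has accumulation points $\mu$, and the Krylov–Bogolyubov argument shows any such $\mu$ is $f$-invariant.

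Finally, I would show that any accumulation point $\mu$ is a Gibbs $u$-state with the claimed density. Cover $M$ by finitely many foliated charts $B_i$ for $\mathcal{F}^u$ with smooth transversals $\Sigma_i$, so that $B_i$ is a continuous union of unstable plaques $P_x$ parametrized by $x \in \Sigma_i$. For each $k$, decompose $f^k(D) \cap B_i$ into plaques and use Step~2 to see that $f^k_*m_D$ restricted to $B_i$ admits a disintegration along plaques whose fiberwise densities (with respect to $m_{P_x}$) have ratios controlled by the bounded distortion constant. Passing to the limit (a subsequence) for a continuous test function $\varphi$ supported in $B_i$, one obtains a representation
\begin{equation*}
\int \varphi \, d\mu = \int_{\Sigma_i} \left( \int_{P_x} \varphi(y) \rho(y)\, dm_{P_x}(y)\right) d\hat{\mu}(x),
\end{equation*}
where $\rho$ is given on each plaque by the infinite product in \eqref{eq:udensity} (this is where the bounded-distortion control on the partial products lets us pass the $k\to\infty$ limit inside the density). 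By uniqueness of the Rokhlin disintegration, these are the conditional measures of $\mu$ along unstable plaques, and their densities are exactly those of \eqref{eq:udensity}, uniformly bounded away from $0$ and $\infty$.

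The main obstacle is the limit step in the final paragraph: the geometric arrangement of the plaques of $f^k(D)$ inside a fixed foliated chart changes with $k$, so turning the pointwise density control of $f^k_* m_D$ into a statement about the conditional measures of $\mu$ requires extracting a plaque-wise weak limit and then invoking uniqueness of disintegration. This is delicate but follows standard Pesin–Sinai lines once the Hölder regularity of $E^u$ and the backward exponential contraction are in hand.
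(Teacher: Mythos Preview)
The paper does not prove this proposition: it is quoted as a classical result of Pesin and Sinai \cite{PS1982} and no argument is supplied. Your outline---bounded distortion via H\"older regularity of $x \mapsto \log\det(Df^{-1}\vert E^u_x)$ along unstable leaves, Krylov--Bogolyubov to get invariance of the weak-$*$ limit, and Rokhlin disintegration in foliated charts to identify the conditional densities---is precisely the standard Pesin--Sinai construction (see also \cite[Chapter~11]{BDV2005}), so there is nothing to compare against and your approach is the expected one.

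One small caution on the last step: you are right that passing the plaque-wise density control to the limit is the delicate point. The cleanest way to organize it is to first prove absolute continuity of the conditional measures of $\mu$ with uniformly bounded densities (which follows from the uniform distortion bound on the partial products and a standard compactness argument on the space of plaques in a foliated box), and only then deduce the exact formula \eqref{eq:udensity} from $f$-invariance of $\mu$ together with the change-of-variables identity $\rho(x)/\rho(f^{-1}(x)) = \text{const} \cdot \det(Df^{-1}\vert E^u_x)$, iterated and normalized. Trying to pass the full infinite product through the weak-$*$ limit directly is messier than necessary.
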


Clearly, convex combinations of Gibbs $u$-states are Gibbs $u$-states. Recall that if $\mu$ is any $f$-invariant measure, the limit 
$$\mu_x = \lim_{n \to \infty} \frac{1}{n} \sum_{k=0}^{n-1} \delta_{f^k(x)} $$
exists and is ergodic $\mu$-almost everywhere, and 
$$ \int \varphi \ d\mu = \int \left( \int \varphi \ d\mu_x \right) \ d\mu$$
for every continuous function $\varphi: M \to \mathbb{R}$.

\begin{prop}[{{\cite[Lemma 11.13]{BDV2005}}}] \label{prop:uM2}  Let $f:M \to M$ be a $C^r$ partially hyperbolic diffeomorphism,  $r>1$. If $\mu$ is a Gibbs $u$-state, then $\mu_x$ is a Gibbs $u$-state for $\mu$-almost every $x$. In other words, every Gibbs $u$-state $\mu$ is a convex combination of ergodic Gibbs $u$-states. 
\end{prop}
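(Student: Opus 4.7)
The plan is to read off the Gibbs $u$-property of each ergodic component $\mu_x$ from that of $\mu$ via an iterated Rokhlin disintegration on a suitable refined partition. First I would fix a measurable partition $\xi$ subordinate to $\fu$ whose atoms are open unstable plaques; by definition $\mu$ is a Gibbs $u$-state if and only if the conditionals $\mu^\xi_y$ are absolutely continuous with respect to $\vol_{\xi(y)}$, and by Proposition~\ref{prop:uM1} the corresponding densities are uniformly bounded away from $0$ and $\infty$. Next, I would invoke the ergodic decomposition theorem to produce a measurable partition $\mathcal{E}$ of $M$ whose atoms are the level sets of the measurable map $y \mapsto \mu_y$, with $\mu_y$ being the conditional of $\mu$ on $\mathcal{E}(y)$.

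The key step is to consider the common refinement $\eta := \xi \vee \mathcal{E}$ and compute its Rokhlin conditionals $\mu^\eta_y$ in two ways. Applying Rokhlin first along $\xi$ and then along $\mathcal{E}$ restricted to $\xi(y)$ gives a normalised restriction of $\mu^\xi_y$ to $\mathcal{E}(y)\cap\xi(y)$, which remains absolutely continuous with respect to $\vol_{\xi(y)}$, since the restriction of an absolutely continuous measure to a positive-mass subset is again absolutely continuous. Applying Rokhlin first along $\mathcal{E}$ and then along $\xi$ restricted to $\mathcal{E}(y)$ yields $(\mu_y)^\xi_y$. Uniqueness of Rokhlin disintegration identifies these two expressions for $\mu$-a.e. $y$, whence $(\mu_y)^\xi_y \ll \vol_{\xi(y)}$, so $\mu_y$ is itself a Gibbs $u$-state; writing $\mu = \int \mu_y\, d\mu(y)$ then exhibits $\mu$ as a convex combination of ergodic Gibbs $u$-states.

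The main obstacle I anticipate is the measurability bookkeeping required to legitimate the iterated conditioning: one must verify that $\eta$ is a bona fide measurable partition, that for $\mu$-a.e. $y$ the trace of $\xi$ on $\mathcal{E}(y)$ is a nontrivial $\mu_y$-measurable partition subordinate to $\fu$, and that the positive-mass hypothesis needed to normalise the restrictions above holds. All of this is standard but technical; the essential input is that the uniform density bounds from Proposition~\ref{prop:uM1} together with $\mu = \int \mu_y\, d\mu(y)$ force, via a Fubini-type argument, $\mu_y(\xi(y)) > 0$ for $\mu$-a.e. $y$, at which point the uniqueness argument above closes.
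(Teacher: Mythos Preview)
The paper does not supply its own proof of this proposition; it is simply quoted from \cite[Lemma~11.13]{BDV2005}. So there is nothing to compare against directly, and your outline has to be judged on its own merits.

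Your framework of iterated Rokhlin disintegration is the standard one, and is essentially what the cited reference does. However, you are missing the key observation that makes the argument go through cleanly: the map $y\mapsto\mu_y$ is \emph{constant along local unstable leaves}. Indeed, if $z\in\xi(y)$ then $d(f^{-n}(y),f^{-n}(z))\to 0$ exponentially, so the Birkhoff averages of every continuous function coincide at $y$ and $z$, whence $\mu_y=\mu_z$. This means every atom of $\xi$ lies entirely inside a single atom of $\mathcal{E}$, so in fact $\eta=\xi\vee\mathcal{E}=\xi$ and the two iterated disintegrations both yield $\mu^\xi_y$ itself. The conditionals of $\mu_y$ along unstable plaques are then literally equal to the conditionals of $\mu$, and the Gibbs $u$-property transfers immediately.

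Without this observation your argument has a genuine gap. What your iterated conditioning produces is the conditional of $\mu_y$ on the set $\xi(y)\cap\mathcal{E}(y)$, not on the full plaque $\xi(y)$. Absolute continuity of that restricted conditional with respect to $\vol_{\xi(y)}$ does not by itself yield absolute continuity of $(\mu_y)^\xi_y$ on the whole plaque, which is what the Gibbs $u$-property requires. Your proposed fix via a Fubini-type argument showing $\mu_y(\xi(y))>0$ addresses a different issue and would not close this gap; what is really needed is $\xi(y)\subset\mathcal{E}(y)$, and that is exactly the constancy-along-leaves fact above.
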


To complete our knowledge about the structure of the set of Gibbs $u$-state   we have,

\begin{prop}[{{\cite[Remark 11.15]{BDV2005}, \cite[Theorem 5]{BDPP2008}}}] \label{prop:uM7} 

Let $f:M \to M$ be a $C^r$ partially hyperbolic diffeomorphism,  $r>1$. Then the set $\Gu(f)\subseteq \M^1(M)$ is closed (and so compact)  and convex. Moreover, the map $\PH^r(M)\ni f\mapsto\Gu(f)\subset \mathbb{M}^1(M)$ is upper semicontinuous. 
\end{prop}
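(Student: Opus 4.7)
The plan is to prove the three assertions of Proposition~\ref{prop:uM7} in turn, with the main effort going to upper semicontinuity. Convexity is elementary: if $\mu_1,\mu_2 \in \Gu(f)$ and $t \in [0,1]$, then on any foliation box of $\fu$ the conditional measures of $t\mu_1 + (1-t)\mu_2$ along a plaque $P$ are convex combinations (with weights determined by the transversal densities) of those of $\mu_1,\mu_2$, hence remain absolutely continuous with respect to $\vol_P$. Moreover, closedness of $\Gu(f)$ is the special case $f_n \equiv f$ of upper semicontinuity, so it suffices to establish the latter.

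To prove upper semicontinuity, fix $f_n \to f$ in $\PH^r(M)$ and $\mu_n \in \Gu(f_n)$ with $\mu_n \to \mu$ weakly. That $\mu$ is $f$-invariant is standard, since $f_n$ converges uniformly to $f$. To see that $\mu \in \Gu(f)$, I would argue locally: fix $x_0 \in M$ and a small foliation box $U$ of $\fu$ around $x_0$ together with a $C^1$ transversal $\Sigma$ that meets each $\fu$-plaque of $U$ exactly once. The $C^1$ continuity of the strong unstable foliation with respect to the diffeomorphism (the HPS-type result cited in Section~\ref{ssec:PH}) ensures that, for $n$ large, $\fu_{f_n}\big|_U$ is also a foliation by $C^1$ plaques that are $C^1$-close to those of $\fu_f$, and $\Sigma$ remains a one-to-one transversal to $\fu_{f_n}$ on $U$.

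By Proposition~\ref{prop:uM1}, the conditional measure $\mu_n^{P_n(y)}$ of $\mu_n$ along the $\fu_{f_n}$-plaque $P_n(y)$ through $y \in \Sigma$ is absolutely continuous with respect to $\vol_{P_n(y)}$ with a positive density $\rho_n^{P_n(y)}$ whose ratios obey the product formula \eqref{eq:udensity} applied to $f_n$. Because $f_n \to f$ in $C^r$ with $r>1$ and $E^u$ varies continuously, these densities (normalized to unit mass on each plaque) are uniformly bounded away from $0$ and $\infty$ and form an equicontinuous family. Denoting by $\hat\mu_n$ the induced transversal measure on $\Sigma$, for every $\phi \in C^0(M)$ supported in $U$ one has
\begin{equation*}
\int \phi \, d\mu_n \;=\; \int_\Sigma \int_{P_n(y)} \phi(z)\, \rho_n^{P_n(y)}(z)\, d\vol_{P_n(y)}(z)\, d\hat\mu_n(y).
\end{equation*}
Passing to a subsequence, $\hat\mu_n \to \hat\mu$ weakly on $\Sigma$; the $C^1$-convergence of the plaques $P_n(y) \to P(y)$ together with equicontinuity of the normalized densities makes the inner integrals converge, uniformly in $y \in \Sigma$, to the corresponding integral over the $\fu_f$-plaque $P(y)$ with density $\rho^{P(y)}$ given by \eqref{eq:udensity} for $f$. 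Taking limits in the displayed identity yields the same representation for $\mu$ in $U$, so $\mu|_U$ disintegrates along $\fu_f$ with absolutely continuous conditionals; since $x_0$ was arbitrary, $\mu \in \Gu(f)$.

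The chief technical difficulty, and what I expect to be the main obstacle, is the careful comparison of the disintegrations of $\mu_n$ with respect to the \emph{moving} foliations $\fu_{f_n}$ against the limit disintegration with respect to $\fu_f$. The key ingredients that make this work are the $C^1$-continuity of the strong unstable foliation as a function of the diffeomorphism (so that the holonomies between $\Sigma$ and nearby $\fu_{f_n}$-plaques are close to the identity) and the fact that the product formula \eqref{eq:udensity} delivers uniformly bounded, equicontinuous densities along plaques. Together these allow the exchange of the limit in $n$ with the plaque integrals and the identification of the limiting disintegration.
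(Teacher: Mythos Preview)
The paper does not prove Proposition~\ref{prop:uM7}; it is stated as a cited result from \cite[Remark 11.15]{BDV2005} and \cite[Theorem 5]{BDPP2008}, with no argument given in the text. So there is no ``paper's own proof'' to compare against.

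Your sketch is essentially the standard argument one finds in those references. The three ingredients you isolate are exactly the right ones: (a) $C^1$ continuity of the strong unstable lamination as a function of the diffeomorphism, so that on a fixed foliation box the $\fu_{f_n}$-plaques through a common transversal $\Sigma$ converge in $C^1$ to the $\fu_f$-plaques; (b) the explicit product formula \eqref{eq:udensity} for the conditional densities, which, because $r>1$ and $E^u$ is H\"older and varies continuously, gives densities that are uniformly bounded and uniformly H\"older along plaques, hence equicontinuous; and (c) passage to the limit in the disintegration formula, identifying the limit as a disintegration of $\mu$ along $\fu_f$ with absolutely continuous conditionals. Your remark that closedness is the constant-sequence case of upper semicontinuity is also the efficient way to organize the statement.

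One small point worth tightening if you write this out in full: when you pass to a subsequence so that $\hat\mu_n \to \hat\mu$ on $\Sigma$, the resulting integral representation is a priori only along that subsequence, but since $\mu_n \to \mu$ along the full sequence the left-hand side is already determined, so the representation you obtain is genuinely a disintegration of $\mu$ (and uniqueness of Rokhlin disintegration then forces the conditionals of $\mu$ to be the limits you computed). You allude to this, but it is the step where carelessness most often creeps in.
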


The last statement in the proposition above is equivalent to say  that,  given any sufficiently small $C^r$ neighborhood $\mathcal{U}$ of $f$, the set 
$\Gu(\mathcal{U})$ is closed in $\mathcal{U} \times \mathbb{M}^1(M)$.

Given a partially hyperbolic diffeomorphism $f: M \to M$, the {\em minimum central Lyapunov exponents} is the measurable function
\begin{align*}
\lambda^c(f,\cdot): M & \to \mathbb{R} \nonumber \\
x & \mapsto \liminf_{n \to \infty} \frac{1}{n} \log \| (Df^n \vert E_x^c )^{-1}\|^{-1} 
\end{align*}

We say that $f$ has {\em positive central Lyapunov exponents} with respect to the invariant measure $\mu$ if $\lambda^c(f,x)>0$ $\mu$-almost everywhere. We find it convenient to write $ E^{cu} = E^c \oplus E^u$. To say that $f$ has positive central Lyapunov exponents with respect to $\mu$ is then the same thing to say that 
\[ \liminf_{n \to \infty} \frac{1}{n} \log \| (Df^n \vert E_x^{cu} )^{-1}\|^{-1}>0 \]
$\mu$-almost everywhere. Indeed, it follows from the definition of partial hyperbolicity that  $\|Df^{-1}|E^u_x\|<\|Df^{-1}|E^c_x\|$ for every $x\in M$. Hence

$$ \liminf_{n \to \infty} \frac{1}{n} \log \| (Df^n \vert E_x^c )^{-1}\|^{-1} =\liminf_{n \to \infty} \frac{1}{n} \log \| (Df^n \vert E_x^{cu} )^{-1}\|^{-1}.$$

Recall that a $C^{r}$ partially hyperbolic diffeomorphism $f:M\to M$, $r>1$, is {\em mostly expanding} if all the Gibbs $u$-states of the diffeomorphism have positive central Lyapunov exponents. We denote by $\mathcal{U}_{\mathcal{ME}}\subset \PH^{r}(M)$ the set of $C^r$ mostly expanding partially hyperbolic diffeomorphism, with $r>1$. It was showed in \cite[Theorem B]{AV2018} that $\mathcal{U}_{\mathcal{ME}}$ is an open set in the $C^{r}$ topology, $r>1$.

 For our purpose, it is useful to consider  the {\em integrated minimum central Lyapunov exponent} defined by
\begin{align*}
\hat{\lambda}^c:\Gu(\mathcal{U}_{\mathcal{ME}}) & \to \mathbb{R} \nonumber \\
(g,\mu) & \mapsto \int  \lambda^c(g,x) \ d\mu(x) \label{def:lambdagorro}
\end{align*}

\begin{prop}[{{\cite[Proposition 3.4]{AV2018}}}] \label{semicont of Lyapunov exponent}
The function $\hat{\lambda}^c $ is lower semicontinuous.
\end{prop}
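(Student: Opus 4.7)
The plan is to express $\hat\lambda^c$ as a supremum of jointly continuous functions; the lower semicontinuity then follows as an elementary consequence.

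First I would observe that, for any $g \in \mathcal{U}_{\mathcal{ME}}$ and any $x \in M$, the sequence
\[ a_n^g(x) := \log \|(Dg^n|E^c_{g,x})^{-1}\| \]
is subadditive along the orbit in the sense that $a_{m+n}^g(x) \le a_m^g(x) + a_n^g(g^m x)$. This is immediate from the submultiplicativity of the operator norm applied to the chain rule decomposition $Dg^{m+n}|E^c_x = (Dg^n|E^c_{g^m x})\circ (Dg^m|E^c_x)$ together with the $Dg$-invariance of $E^c$. Since $g$ is $C^1$, $a_n^g$ is continuous and uniformly bounded on $M$, so Kingman's subadditive ergodic theorem applies to every $g$-invariant measure $\mu$: the limit $\lim_n \frac{1}{n} a_n^g(x)$ exists $\mu$-a.e. and equals the decreasing limit of $\ell \mapsto \frac{1}{\ell}\int a_\ell^g\,d\mu$. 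Consequently
\[ \hat\lambda^c(g,\mu) \;=\; -\lim_{\ell\to\infty}\frac{1}{\ell}\int a_\ell^g\,d\mu \;=\; \sup_{\ell \ge 1}\, \Phi_\ell(g,\mu), \qquad \Phi_\ell(g,\mu):=\frac{1}{\ell}\int \log\|(Dg^\ell|E^c_{g,x})^{-1}\|^{-1}\,d\mu(x). \]

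Next I would verify that for each fixed $\ell$ the map $\Phi_\ell$ is continuous on $\Gu(\mathcal{U}_{\mathcal{ME}})$. This requires three ingredients, all standard in our setting: (i) the central bundle $E^c_g$ depends continuously on $g$ in the $C^0$ topology (by \cite[Corollary~2.17]{HP2006}, cited above); (ii) the map $g \mapsto Dg^\ell$ is continuous in the $C^0$ norm for $g$ in the $C^r$ topology with $r \ge 1$; and (iii) the function $x \mapsto \log\|(Dg^\ell|E^c_{g,x})^{-1}\|^{-1}$ depends continuously on $x$ for fixed $g$. Combining (i)--(iii), if $g_n \to g$ in $C^r$ then the continuous functions $\psi_n(x):=\frac{1}{\ell}\log\|(Dg_n^\ell|E^c_{g_n,x})^{-1}\|^{-1}$ converge uniformly on $M$ to $\psi(x):=\frac{1}{\ell}\log\|(Dg^\ell|E^c_{g,x})^{-1}\|^{-1}$. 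Therefore, if $(g_n,\mu_n) \to (g,\mu)$ in $\mathrm{Diff}^r(M) \times \M^1(M)$, uniform convergence of $\psi_n$ combined with weak-$*$ convergence of $\mu_n$ gives $\int \psi_n\,d\mu_n \to \int \psi\,d\mu$, i.e.\ $\Phi_\ell(g_n,\mu_n) \to \Phi_\ell(g,\mu)$.

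Finally, since $\hat\lambda^c = \sup_\ell \Phi_\ell$ is a pointwise supremum of a countable family of continuous functions on the subspace $\Gu(\mathcal{U}_{\mathcal{ME}})$, it is automatically lower semicontinuous, which is the assertion.

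Essentially no part of this is delicate; the only point deserving care is making sure that the joint continuity of $\Phi_\ell$ really survives the fact that $E^c$ is not a fixed bundle but rather varies with $g$. That is handled by the continuous dependence of the partially hyperbolic splitting on the diffeomorphism, which is why the statement is confined to the open set $\mathcal{U}_{\mathcal{ME}} \subset \PH^r(M)$.
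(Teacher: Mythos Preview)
Your argument is correct and is precisely the standard one: write $\hat\lambda^c$ as the supremum over $\ell$ of the averages $\Phi_\ell(g,\mu)=\tfrac{1}{\ell}\int\log\|(Dg^\ell|E^c_{g,x})^{-1}\|^{-1}\,d\mu$ via Kingman's subadditive ergodic theorem, check that each $\Phi_\ell$ is jointly continuous in $(g,\mu)$ thanks to the continuous dependence of the splitting on the diffeomorphism, and conclude lower semicontinuity from the sup-of-continuous representation.

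Note that the present paper does not actually give a proof of this proposition; it is quoted verbatim from \cite[Proposition~3.4]{AV2018}. Your argument is essentially the one given there (and is the canonical proof of such statements), so there is nothing to compare.
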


Following \cite{ABV2000}, if $f$ is mostly expanding,  we say that an $f$-invariant measure $\mu$  is a {\em Gibbs $cu$-state}  if $\mu$ has positive Lyapunov exponent and the conditional measures of $\mu$ along the corresponding   local (Pesin) center-unstable manifolds are almost everywhere absolutely continuous with respect to Lebesgue measure on these manifolds.
For each  $f\in \mathcal{U}_{\mathcal{ME}}$, we denote by $\Gcu(f)$ the subset of  Gibbs $cu$-states. If $\mathcal{U}\subseteq \mathcal{U}_{\mathcal{ME}}$, we denote by
$$
\Gcu(\mathcal{U}) := \{(g,\mu) : g \in \mathcal{U} \text{ and }\mu \in \Gcu(g)\}.
$$

Every Gibbs $cu$-state is in fact a Gibbs $u$-state with positive central Lyapunov exponents, although the converse is not true (see  the example in \cite[page 8]{AV2018}). We also have the following analogue of Proposition~\ref{prop:uM2}.

\begin{prop}[{\cite[Lemma 2.4]{V2007}}]\label{prop:cuM2}  Let $f:M \to M$ be a $C^r$ partially hyperbolic diffeomorphism,  $r>1$. Then every Gibbs $cu$-state $\mu$ is a convex combination of ergodic Gibbs $cu$-states. 
\end{prop}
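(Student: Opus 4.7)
The plan is to apply the Birkhoff ergodic decomposition $\mu=\int \mu_x\,d\mu(x)$ and to show that, for $\mu$-a.e.\ $x$, the ergodic component $\mu_x$ is itself a Gibbs $cu$-state. This exhibits $\mu$ as the required convex combination. Two things have to be verified: that $\mu_x$ has positive central Lyapunov exponent, and that its conditional measures along local Pesin center-unstable manifolds are absolutely continuous. The first is essentially automatic: $\lambda^c(f,\cdot)$ is defined as a $\liminf$ of time averages, hence is $f$-invariant and so constant on each ergodic component; as it is positive on a set of full $\mu$-measure by hypothesis, it is positive $\mu_x$-a.e.\ for $\mu$-a.e.\ $x$.

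For the second property I would follow the scheme used in the proof of Proposition~\ref{prop:uM2} for Gibbs $u$-states, adapted to the Pesin setting. Fix a Pesin block $\La$ of positive $\mu$-measure on which the local Pesin center-unstable manifolds $W^{cu}_{loc}(z)$ have uniform size and vary continuously with $z\in\La$, and build a measurable partition $\xi$ of $M$ whose atoms inside $\La$ are open neighbourhoods of $z$ in $W^{cu}_{loc}(z)$. By hypothesis the conditionals $\mu^\xi_z$ are absolutely continuous with respect to $\vol_{W^{cu}_{loc}(z)}$. The key geometric input is that every $y\in W^{cu}_{loc}(z)$ satisfies $d(f^{-n}y,f^{-n}z)\to 0$ exponentially, by Pesin's unstable manifold theorem. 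Applied to continuous observables and combined with the two-sided Birkhoff theorem, this yields $\mu_y=\mu_z$ for $\vol_{W^{cu}_{loc}(z)}$-a.e.\ $y$, hence, by absolute continuity of $\mu^\xi_z$, for $\mu^\xi_z$-a.e.\ $y$. This says that $\xi$ refines, mod $\mu$-zero, the partition $\eta$ into ergodic components, and then the standard hierarchy-of-conditionals identity $\mu^\xi_z=(\mu_z)^\xi_z$ transfers absolute continuity from $\mu$ to $\mu_z$.

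The main obstacle is not the dynamics but the measure-theoretic bookkeeping. Pesin center-unstable manifolds are defined only on a full-$\mu$-measure set, and uniform geometry is only available block by block; so the partition $\xi$ has to be constructed on a single Pesin block and the global statement obtained by exhausting $M$ (mod $\mu$-zero) by countably many blocks of increasing $\mu$-measure. A secondary subtlety is that the backward-iteration argument requires the set of base points to be $f^{-1}$-invariant mod $0$, which is handled by intersecting with the set of two-sided Birkhoff-regular points. Once these details are arranged, the decomposition $\mu=\int \mu_x\,d\mu(x)$ together with the now-verified Gibbs $cu$-property of $\mu_x$ for $\mu$-a.e.\ $x$ gives exactly the asserted convex combination.
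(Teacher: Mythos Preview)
The paper does not give its own proof of this proposition; it is quoted verbatim as \cite[Lemma~2.4]{V2007} and used as a black box. Your sketch follows precisely the standard route one finds in that reference (and in the analogous Proposition~\ref{prop:uM2} for Gibbs $u$-states): take the ergodic decomposition $\mu=\int\mu_x\,d\mu(x)$, observe that positivity of $\lambda^c$ is inherited by the components, and then transfer absolute continuity along Pesin $cu$-manifolds to each $\mu_x$ via the hierarchy-of-conditionals identity $(\mu_x)^{\xi}_z=\mu^{\xi}_z$ once one knows that the partition $\xi$ into local Pesin $cu$-plaques refines the ergodic partition.

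Your identification of the genuine measure-theoretic bookkeeping issues---working one Pesin block at a time to get uniform geometry and a bona fide measurable partition, and exhausting $M$ by a countable union of such blocks---is exactly right, and your backward-asymptotics argument for showing that $\mu_y=\mu_z$ along a Pesin $cu$-leaf is the standard one. One small clarification worth making explicit: to conclude $\mu_y=\mu_z$ from backward asymptotics you need both $y$ and $z$ to lie in the full-$\mu$-measure set where the two-sided Birkhoff averages exist and agree; this is fine for $\mu^{\xi}_z$-a.e.\ $y$ (by the disintegration formula), which is all you use, but it would not a priori hold for $\vol_{W^{cu}_{loc}(z)}$-a.e.\ $y$ as you also write. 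Since the subsequent step only invokes the $\mu^{\xi}_z$-a.e.\ version, the argument goes through.
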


It is rather straightforward to see that an ergodic Gibbs $cu$-satate must be a physical measure. In fact it is a special case of the situation treated in \cite{MR983869} (and which has become an underlying paradigm for much of smooth ergodic theory), in which it was shown that if an ergodic measure has only non-zero Lyapunov exponents and is absolutely continuous along Pesin's unstable manifolds, then it is a physical measure.   The next results says that, in the present context, the the converse is also true.

\begin{prop}[{{\cite[Lemma 4.4]{AV2018}}}] \label{pm vs cu}
Let $f:M \to M$ be a $C^r$ mostly expanding partially hyperbolic diffeomorphism, $r>1$.  Then the set of physical measures coincides with the set of ergodic Gibbs $cu$-states.
\end{prop}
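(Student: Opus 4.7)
The proof has two directions. For the easy implication ``$\mu$ ergodic Gibbs $cu$-state implies $\mu$ physical'' I would run the classical Pesin--Ledrappier basin argument: since $f$ is mostly expanding and $E^s$ is uniformly contracting, $\mu$ is hyperbolic and admits unstable Pesin manifolds tangent to $E^{cu}$ at $\mu$-a.e.\ point. By Birkhoff the set of $\mu$-generic points has full $\mu$-measure and, by absolute continuity of the Gibbs $cu$-conditionals, meets a $\mu$-typical unstable Pesin plaque $W$ in a set of positive leaf-Lebesgue measure $m_W$. Points on a common strong stable leaf share forward Birkhoff averages for continuous test functions, so the strong stable saturation of this set is contained in $B(\mu)$, and absolute continuity of the strong stable foliation makes this saturation of positive $m$-measure in $M$.

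For the converse, let $\mu$ be a physical measure. The first step is to identify $\mu$ as a Gibbs $u$-state: since $B(\mu)$ has positive Lebesgue measure, Fubini with respect to the absolutely continuous strong unstable foliation yields an unstable disk $D$ with $m_D(D\cap B(\mu))>0$; writing $\nu$ for the normalization of $m_D|_{D\cap B(\mu)}$, dominated convergence applied to the Cesaro averages of each $\varphi\in C^0(M)$ on the equicontinuous set $D\cap B(\mu)$ gives
\begin{equation*}
\int\varphi\,d\mu \;=\; \lim_{n\to\infty}\int\varphi\,d\!\left(\frac{1}{n}\sum_{k=0}^{n-1}f^k_*\nu\right),
\end{equation*}
so $\mu$ is an accumulation point of Cesaro averages of pushforwards of leaf-Lebesgue on a strong unstable disk, hence a Gibbs $u$-state by Proposition~\ref{prop:uM1}. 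The mostly expanding hypothesis then forces $\lambda^c(f,\cdot)>0$ $\mu$-a.e., so $\mu$ is hyperbolic and $\dim E^{cu}$-dimensional unstable Pesin manifolds exist $\mu$-a.e.

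The main obstacle is promoting $\mu$ from ``Gibbs $u$-state with positive central exponents'' to a genuine Gibbs $cu$-state. My plan is to disintegrate a conditional $\mu_W$ on a typical unstable Pesin plaque $W$ along its strong unstable subfoliation: the leaf-conditionals are absolutely continuous with respect to strong unstable Lebesgue by the Gibbs $u$-property (densities given by \eqref{eq:udensity}), so the task reduces to absolute continuity of the transverse quotient measure on an $E^c$-direction transversal inside $W$. Working in Pesin charts where $Df^{-n}|E^{cu}$ contracts exponentially on average, a bounded-distortion argument along strong unstable plaques shows that the center holonomy between nearby strong unstable leaves inside $W$ is absolutely continuous with log-bounded Jacobians; $f$-invariance of $\mu_W$ then propagates the leaf-Lebesgue absolute continuity to the transverse direction, yielding $\mu_W\ll m_W$. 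Once $\mu$ is known to be Gibbs $cu$, Proposition~\ref{prop:cuM2} decomposes it into ergodic Gibbs $cu$-states; each such component is physical by the easy direction, and since basins of distinct physical measures are disjoint and their union has full Lebesgue measure (Theorem~\ref{MACA1teoC}), the positive measure of $B(\mu)$ forces $\mu$ to coincide with exactly one ergodic component, so $\mu$ itself is an ergodic Gibbs $cu$-state.
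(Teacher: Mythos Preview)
The paper does not prove this proposition; it is quoted from \cite[Lemma~4.4]{AV2018}, with the remark preceding it that the direction ``ergodic Gibbs $cu$-state $\Rightarrow$ physical'' is the classical Pesin--Ledrappier mechanism. Your first paragraph reproduces that mechanism correctly.

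Your converse argument, however, has a genuine gap at the step where you promote a Gibbs $u$-state with positive central exponents to a Gibbs $cu$-state. The sketch you give --- disintegrate the plaque conditional $\mu_W$ along the strong unstable subfoliation and argue that the transverse quotient is absolutely continuous via ``$f$-invariance of $\mu_W$'' --- invokes only the Gibbs $u$-property, positivity of the central exponents, and invariance of $\mu$. Nowhere does it use that $\mu$ is physical. If the argument worked, it would show that \emph{every} Gibbs $u$-state with positive central exponents is a Gibbs $cu$-state; the paper states explicitly (in the paragraph just before Proposition~\ref{prop:cuM2}) that this implication is \emph{false}, citing an example in \cite[page~8]{AV2018}. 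The individual steps are also not well founded: $\mu_W$ is not $f$-invariant, since $f$ does not preserve a single Pesin plaque, and absolute continuity of the $E^u$-holonomy inside $W$ constrains how the quotient measure transforms under holonomy, not whether it is absolutely continuous to begin with.

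The route taken in \cite{AV2018} bypasses this promotion entirely. Mostly expanding implies the NUE-condition of \cite{ABV2000}, so the construction there produces finitely many ergodic Gibbs $cu$-states whose basins already cover Lebesgue-almost all of $M$; this is precisely the content of Theorem~\ref{MACA1teoC}. Since $m(B(\mu))>0$ and basins of distinct ergodic measures are disjoint, $\mu$ must equal one of these pre-built ergodic Gibbs $cu$-states. In other words, your final paragraph \emph{is} the whole converse argument once you feed in Theorem~\ref{MACA1teoC}; the intermediate detour through ``$\mu$ is Gibbs $u$, hence Gibbs $cu$'' is unnecessary and, in the form you wrote, does not go through.
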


A central result in this paper is the following Gibbs $cu$-states version of Proposition~\ref{prop:uM7}.

\begin{maintheorem}\label{cu is closed}
Let $\mathcal{U}_{\mathcal{ME}}$ be the  $C^r$ open set of mostly expanding diffeomorphisms, $r>1$. 
Then, the map $\mathcal{U}_{\mathcal{ME}}\ni f\mapsto\Gcu(f)\subset \mathbb{M}^1(M)$ is upper semicontinuous. 
\end{maintheorem}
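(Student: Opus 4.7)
The plan is as follows. Let $(f_n,\mu_n)$ be a sequence in $\Gcu(\ME)$ with $f_n \to f$ in the $C^r$ topology and $\mu_n \to \mu$ in the weak$^*$ topology; I need to show that $\mu \in \Gcu(f)$. Every Gibbs $cu$-state is in particular a Gibbs $u$-state, so Proposition~\ref{prop:uM7} delivers $\mu \in \Gu(f)$ for free, and because $f \in \ME$ this already yields positive central Lyapunov exponent $\mu$-almost everywhere. What remains, and what the entire work of the proof goes into, is the absolute continuity of the conditional measures of $\mu$ along the local Pesin center-unstable manifolds of $f$.

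The strategy is to control the relevant densities uniformly along the sequence by means of hyperbolic times supplied by Lemma~\ref{Pliss-like lemma}. First I would fix a $C^r$ neighborhood $\cU$ of $f$ contained in $\ME$ and small enough that $\Gu(\cU)$ is compact (Proposition~\ref{prop:uM7}). Combining this compactness with the lower semicontinuity from Proposition~\ref{semicont of Lyapunov exponent}, pick $\si>0$ with $\hat{\lambda}^c(g,\nu) \geq 2\si$ for every $(g,\nu) \in \Gu(\cU)$. Next, choose $\ell$ so large that the observables
\[ \varphi_{g,\ell}(x) \;=\; \tfrac{1}{\ell}\log \|(Dg^\ell|\ecu_x)^{-1}\|^{-1} \]
satisfy $\int \varphi_{g,\ell}\,d\nu \geq \si$ uniformly over $(g,\nu) \in \Gu(\cU)$ and, moreover, a prescribed fraction $1-\ka$ of the $\nu$-mass lies in $\{\varphi_{g,\ell} \geq \Gamma\}$ for some threshold $\Gamma$ just below $\si$, with $\ka$ as small as one wishes. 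Feeding the pointwise ergodic theorem (applied to the indicator of $\{\varphi_{g,\ell} < \Gamma\}$) along a $\mu_n$-typical orbit into Lemma~\ref{Pliss-like lemma}, with the uniform lower bound $L$ on $\varphi_{g,\ell}$ coming from compactness of $M$ and $C^r$-boundedness over $\cU$, produces $g^\ell$-hyperbolic times along that orbit with density arbitrarily close to one.

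At each hyperbolic time, standard bounded-distortion arguments along strong-unstable plaques produce Pesin center-unstable disks of uniform geometric size, on which the Riemannian volume has density uniformly bounded above and below. This is exactly the input for which the characterization of Gibbs $cu$-states in Theorem~\ref{characterization} is tailored: it recasts absolute continuity along Pesin leaves as a positive-mass statement about ``good'' embedded disks with distortion bounded by a universal constant, a property stable under weak$^*$ limits once the distortion and density bounds are uniform in $n$. Transferring this property from the $\mu_n$ to $\mu$ then gives $\mu \in \Gcu(f)$. The hard part will be precisely this transfer: Pesin leaves vary only measurably with the base point and with the dynamics, so there is no hope of disintegrating the $\mu_n$ against leaves that survive in the limit; Theorem~\ref{characterization} is what sidesteps this, while Lemma~\ref{Pliss-like lemma} is indispensable because the classical Pliss Lemma only guarantees hyperbolic times of some positive density, which would leak a definite proportion of mass away in the limit and destroy the mass-on-good-disks estimate that $\mu$ must inherit.
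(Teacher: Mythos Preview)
Your outline is correct and follows essentially the same route as the paper: the Pliss-like Lemma~\ref{Pliss-like lemma} is used (via Proposition~\ref{density of hyperbolic times} and Lemma~\ref{large pesin blocks}) to obtain hyperbolic-time frequency arbitrarily close to one uniformly over a neighborhood of $f$, the resulting uniform Pesin disks and density bounds feed into the characterization Theorem~\ref{characterization}, and the inequality \eqref{characterizationinequality} is then passed to the limit. The only step you leave implicit and which the paper handles explicitly is the convergence of the disks $W_r^{cu}(f_n,\cdot)$ to disks in $\cW_\ell(f)$ (Lemma~\ref{closeness of pesin blocks} together with item~(ii) of Theorem~\ref{unstable manifolds}); this is what makes the supremum on the right-hand side of \eqref{characterizationinequality} land in $\cW_\ell(f)$ rather than in $\cW_\ell(f_n)$.
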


Theorem~\ref{cu is closed} says that for every sequence $f_n $, $n\geq 1$, converging to $f\in\mathcal{U}_{\mathcal{ME}}$ in the $C^r$ topology, and every sequence $\mu_n\in \Gcu(f_n) $, all accumulation points of $\mu_n$  (in the weak* topology) belong to the set $\Gcu(f) $.
Notice that Theorem~\ref{cu is closed} is not a consequence of Proposition~\ref{prop:uM7}. When dealing  with Gibbs $u$-states, one considers disintegration along unstable manifolds (which are defined at every point, are tangent to $E^u$ and have uniform size). On the other hand, when one deals with Gibbs $cu$-states, one considers disintgration along Pesin unstable manifolds (which are defined almost everywhere, are tangent to $E^c \oplus E^u$, and do not have uniform size). Theorem~\ref{cu is closed} is new and Sections \ref{uniform estimates} and \ref{proof of prop} are entirely dedicated to the proof of this result.

%%%%%%%%%%%%%%%%%%%%%%%%%%%%%%%%%%%%%%%%%%%%%%%%%%%%%%%%%%%%%%%%%%%%%%%%%%%%%

\section{Uniform estimates of non-uniform hyperbolicity}\label{uniform estimates}

The apparently paradoxical title of this section reflects much of the spirit of non-uniform hyperbolicity in the presence of dominated splittings and partial hyperbolicity. Unlike 'genuine' non-uniformly hyperbolic systems, in which the angle between stable and unstable bundles may be arbitrarily small, these often allow some explicit form of robustness. An important manifestation of such robustness properties is that the measure of sets on which certain degrees of hyperbolicity hold may be uniformly bounded away from zero or even uniformly close to one.  Central to this theme is our Pliss-like Lemma~\ref{Pliss-like lemma} which we shall now prove.

\subsection{Proof of Lemma~\ref{Pliss-like lemma}}

Just as in Ma\~n\'e's proof of Pliss' Lemma \cite[Lemma 11.8]{M1987}, we define a function $S:\{0, \ldots,N\} \rightarrow \R$ by taking $S(0)=0$ and $S(n)=\sum_{j =1}^n a_j-n \gamma$ for $1\leq n \leq N$. Defining $1<n_1<\dots<n_k \leq N$ as the maximal sequence such that
$S(n_i)\geq S(n)$ holds for every $0\leq n< n_i$ and $i =1, \ldots, k$, one may easily check that the $n_i$ satisfy (\ref{pliss condition}). It remains is to show is that $k \geq \theta N$.

We set $F=\{i\in \{1, \dots , N\}: a_i < \Gamma \}$ and write $\{1,\dots,N\}\setminus\{n_{1},\dots,n_{k}\}$ as the finite union $\bigcup_{\alpha\in\Lambda} I_\alpha$ of pairwise disjoint intervals in $\mathbb{N}$ with cardinality $|I_\alpha|$. Note that 
\begin{equation}\label{sum on interval}
\sum_{i \in I_\alpha} a_i < | I_\alpha| \gamma
\end{equation}
for every $\alpha \in \Lambda$, for else the maximality of the sequence $n_i$ would be violated. We can bound $a_i$ from below by either $L$ or $\Gamma$, depending on whether or not $i$ belongs to $F$. Therefore
\begin{equation}\label{split sum on interval}
\sum_{i \in I_\alpha} a_i  = \sum_{i\in I_\alpha \cap F} a_i + \sum_{i \in I_\alpha \cap F^c} a_i 
\geq |I_\alpha \cap F| L + |I_\alpha \cap F^c| \Gamma. 
\end{equation}
Combining (\ref{sum on interval}) and (\ref{split sum on interval}) we obtain
\begin{equation}\label{ineq 1}
|I_\alpha \cap F| L + |I_\alpha \cap F^c| \Gamma < |I_\alpha| \gamma.
\end{equation}
Using the identity $|I_\alpha| = |I_\alpha \cap F | + |I_\alpha \cap F^c |$, rearranging terms, and summing over $\alpha$, (\ref{ineq 1}) becomes
\begin{equation}\label{ineq 2}
(\Gamma-L) \sum_{\alpha \in \Lambda} |I_\alpha \cap F|> (\Gamma-\gamma) \sum_{\alpha \in \Lambda} |I_\alpha|.
\end{equation}

Recall that $\{ I_\alpha: \alpha \in \Lambda \}$ is the family of disjoint intervals in $\{1,\dots,N\}\setminus\{n_{1},\dots,n_{k}\}$. In particular,
\begin{equation}\label{sum of intervals}
\sum_{\alpha \in \Lambda} |I_\alpha| = N-k.
\end{equation}
Moreover,  by hypotheses we have $ |F| \leq \kappa N$, and then
\begin{equation}\label{cardinality of F}
\sum_{\alpha \in \Lambda} |I_\alpha \cap F| \leq |F| \leq \kappa N.
\end{equation}
Combining (\ref{ineq 2}) with (\ref{sum of intervals}) and (\ref{cardinality of F}) gives
\begin{eqnarray}
(\Gamma - L) \kappa N &>& (\Gamma-\gamma) (N-k)\nonumber\\
&>& (\Gamma-\gamma)N -(\Gamma-\gamma) k\label{ineq 3}
\end{eqnarray}
Rearranging terms in (\ref{ineq 3}) we obtain
$$(\Gamma -\gamma) k>\left[(\Gamma -\gamma) -(\Gamma - L) \kappa\right] N$$
and then
$$
k> \left[1-\frac{\Gamma - L}{\Gamma -\gamma}\kappa\right] N$$
\noindent shows that $k > N \theta$, where $\theta=\left[1-\frac{\Gamma - L}{\Gamma -\gamma}\kappa\right]$.
%\end{proof}

\subsection{Abundance of hyperbolic times}

We recall (see \cite{ABV2000}) that, given $0<\sigma<1$,  an integer $n\geq 0$ is a {\em $\sigma$-hyperbolic time} for $x\in M$ if 
\begin{equation*}
\prod_{j=n-k+1}^n \| Df^{-1} \vert E_{f^j(x)}^{cu}\| \leq \sigma^k \quad \text{ for all } 1\leq k \leq n.
\end{equation*}

Recall from section \ref{measure section} that  if  $f\in \ME$ and  $\mu\in \Gu(f)$ then
$$\lambda^c(f,x)=\liminf_{n \to \infty} \frac{1}{n} \log \| (Df^n \vert E_x^{cu} )^{-1}\|^{-1} >0$$ 
$\mu$-almost everywhere. The integrated minimum central Lyapunov exponent $\hat{\lambda}^c(f,\cdot)$ is therefore positive on $\Gu(f)$. Since $\Gu(f)$ is compact (Proposition~\ref{prop:uM7})  and $\hat{\lambda}^c$ is lower semi-continuous on $\Gu(f)$ (Proposition~\ref{semicont of Lyapunov exponent}), there is a positive lower bound for $\hat{\lambda}^c(f,\mu)$ on $\Gu(f)$.
We can therefore  fix some $0<\sigma<1$ such that
\begin{equation}\label{sigma}
0< \log \sigma^{-1} < \inf_{\mu \in \Gu(f)} \hat{\la}^c(f,\mu)
\end{equation} 
and write
\begin{equation*}
\tau_x^\ell(f,\sigma) = \{ n \in \mathbb{N}: n \text{ is a $\sigma^\ell$ hyperbolic time for $x$ under $f^\ell$} \}.
\end{equation*}
When it is not necessary to emphasize the dependency of $\sigma$, by simplicity we  write $\tau_x^\ell(f,\sigma) = \tau_x^\ell(f)$. 

The next result says that the frequency of hyperbolic times can be taken arbitrarily close to one. The price to pay is that we may have to take a large iterate of $f$.

First we fix some notation. In what follows, we  use bold capital letters $\mathbf{U}, \mathbf{V},...$ to denote open sets  of the (fibered) space $\Gu(\mathcal{U}_{\mathcal{ME}})$ in the topology induced by the product topology on $\textrm{Diff}^r(M)\times \mathbb{M}^1(M)$.

\begin{prop}\label{density of hyperbolic times}
Let $f:M \rightarrow M$, $r>1$, be a $C^r$ mostly expanding diffeomorphism, $\mu$ a Gibbs $u$-state of $f$, and let $0< \sigma< 1$ be  such that (\ref{sigma}) holds. Then, given any $\epsilon>0$,  there exists a neighborhood $\mathbf{U}$ of $(f,\mu)$ in $\Gu(\mathcal{U}_{\mathcal{ME}})$ and some natural number $\ell_0\geq 1$ such that for every $(g,\nu) \in \mathbf{U}$, and every $\ell \geq \ell_0$, there is a set $A \subset M$ with $\nu(A)>1-\epsilon$ such that
\begin{equation*}
\liminf_{N \rightarrow \infty} \frac{ | \tau_x^\ell (g, \sigma) \cap \{1, \ldots, N\}|}{N}  \geq 1-\epsilon  
\end{equation*} 
for every $x \in A$.

\end{prop}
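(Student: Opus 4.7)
The plan is to apply Lemma~\ref{Pliss-like lemma} to the finite sequence
\[
  a_j := \phi_\ell\bigl((g^\ell)^j x\bigr),\qquad \phi_\ell(g,y):=-\log\|(Dg^\ell|E^{cu}_y)^{-1}\|,\qquad j=1,\dots,N,
\]
observing that $n$ is a $\sigma^\ell$-hyperbolic time for $g^\ell$ at $x$ precisely when $\sum_{j=m+1}^n a_j\geq (n-m)\,\ell\log\sigma^{-1}$ for every $0\leq m<n$. A uniform lower bound $a_j\geq L:=-\ell C_0$ holds on any compact $C^r$-neighborhood $\mathcal{U}_0\subset\mathcal{U}_{\mathcal{ME}}$ of $f$, using $\|(Dg^\ell|E^{cu}_y)^{-1}\|\leq\|Dg^{-1}\|_\infty^\ell$. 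The compactness of $\Gu(\mathcal{U}_0)$ (Proposition~\ref{prop:uM7}), lower semicontinuity of $\hat\lambda^c$ (Proposition~\ref{semicont of Lyapunov exponent}), and \eqref{sigma} together allow us, after shrinking $\mathcal{U}_0$, to fix $\beta>0$ with $\hat\lambda^c(g,\nu)>\log\sigma^{-1}+3\beta$ on all of $\Gu(\mathcal{U}_0)$. Setting $\gamma:=\ell\log\sigma^{-1}$ and $\Gamma:=\ell(\log\sigma^{-1}+\beta)$, Lemma~\ref{Pliss-like lemma} yields a frequency of hyperbolic times at least $1-K\kappa$ whenever the proportion of $j\leq N$ with $a_j<\Gamma$ does not exceed $\kappa$, where $K:=(\log\sigma^{-1}+\beta+C_0)/\beta$ is independent of $\ell$.

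Controlling the density of these bad iterates relies on the fact that, by Proposition~\ref{prop:uM2}, every ergodic component of an element of $\Gu(g)$ is itself a Gibbs $u$-state lying in $\Gu(\mathcal{U}_0)$. Ergodicity then forces the (a.e.\ constant) pointwise value of $\lambda^c(g,\cdot)$ on such a component to equal its integrated exponent, giving $\lambda^c(g,y)>\log\sigma^{-1}+3\beta$ for $\nu$-almost every $y$ and every $(g,\nu)\in\Gu(\mathcal{U}_0)$. The ergodic theorem for the superadditive sequence $\phi_n$ yields $\phi_\ell(g,y)/\ell\to\lambda^c(g,y)$ $\nu$-a.e., so $\nu(Z_\ell(g))\to 0$ as $\ell\to\infty$ for each fixed $(g,\nu)$, where $Z_\ell(g):=\{\phi_\ell(g,\cdot)<\Gamma\}$.

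The central technical difficulty is to make this decay uniform in $(g,\nu)$ and in $\ell$ simultaneously. Fix a non-increasing continuous cutoff $\chi:\mathbb{R}\to[0,1]$ equal to $1$ on $(-\infty,\log\sigma^{-1}+\beta]$ and vanishing on $[\log\sigma^{-1}+2\beta,\infty)$, and set
\[
  \tilde\psi_\ell(g,y):=\chi\bigl(\phi_\ell(g,y)/\ell\bigr),\qquad H_L(g,y):=\sup_{\ell\geq L}\tilde\psi_\ell(g,y).
\]
Because $E^{cu}$ varies continuously with $g$, each $\tilde\psi_\ell$ is continuous on $\mathcal{U}_0\times M$, so each $H_L$ is upper semicontinuous there with values in $[0,1]$; approximating $H_L$ from above by continuous functions shows that $(g,\nu)\mapsto\int H_L(g,\cdot)\,d\nu$ is upper semicontinuous on $\mathcal{U}_0\times\M^1(M)$. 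Since $H_L(f,y)\searrow 0$ for $\mu$-a.e.\ $y$ (by the Kingman limit combined with the choice of $\chi$), monotone convergence gives $\int H_L(f,\cdot)\,d\mu\to 0$ as $L\to\infty$. Hence, given $\eta>0$, we may pick $\ell_0$ large and a neighborhood $\mathbf{U}$ of $(f,\mu)$ in $\Gu(\mathcal{U}_0)$ on which $\int H_{\ell_0}(g,\cdot)\,d\nu<\eta$; since $\mathbf{1}_{Z_\ell(g)}\leq\tilde\psi_\ell\leq H_{\ell_0}$ whenever $\ell\geq\ell_0$, this gives $\nu(Z_\ell(g))<\eta$ uniformly for $(g,\nu)\in\mathbf{U}$ and $\ell\geq\ell_0$. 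Birkhoff's theorem applied to $g^\ell$ and the observable $\mathbf{1}_{Z_\ell(g)}$, combined with Markov's inequality, then produces a set $A=A(g,\nu,\ell)$ with $\nu(A)>1-\sqrt\eta$ on which $\limsup_N N^{-1}|\{j\leq N:(g^\ell)^j x\in Z_\ell(g)\}|\leq\sqrt\eta$. For each $x\in A$ and every $N$ sufficiently large, Lemma~\ref{Pliss-like lemma} yields at least $\bigl(1-K\sqrt\eta-o(1)\bigr)N$ hyperbolic times in $\{1,\dots,N\}$; choosing $\eta$ so small that both $\sqrt\eta<\epsilon$ and $K\sqrt\eta<\epsilon$ hold finishes the argument. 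The principal obstacle is precisely this combined uniformity, resolved by the upper semicontinuous majorant $H_L$, which decouples the dependence on $(g,\nu)$ from the dependence on $\ell$.
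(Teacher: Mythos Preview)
Your overall strategy coincides with the paper's: apply Lemma~\ref{Pliss-like lemma} to the sequence $a_j=\phi_\ell\bigl(g,(g^\ell)^{j-1}x\bigr)$ (there is a harmless off-by-one in your indexing), control the fraction of indices with $a_j<\Gamma$ through the $\nu$-measure of the sub-$\Gamma$ level set $Z_\ell(g)$, and then combine Birkhoff's theorem with a Chebyshev/Markov bound to produce the set $A$. The only substantive divergence is in how you obtain smallness of $\nu(Z_\ell(g))$ uniformly in $(g,\nu)$ near $(f,\mu)$.

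There your argument has a genuine gap. You assert that $H_L(g,y)=\sup_{\ell\geq L}\tilde\psi_\ell(g,y)$ is upper semicontinuous on $\mathcal{U}_0\times M$ and then approximate it from above by continuous functions to conclude that $(g,\nu)\mapsto\int H_L(g,\cdot)\,d\nu$ is upper semicontinuous. But a pointwise supremum of continuous functions is \emph{lower} semicontinuous, not upper; consequently $(g,\nu)\mapsto\int H_L(g,\cdot)\,d\nu$ is lower semicontinuous, which is exactly the wrong direction: it does not let you conclude that the integral stays below $\eta$ on a neighborhood of $(f,\mu)$. Switching to $1-H_L$ or to an infimum does not help either, since an infimum (which would indeed be u.s.c.) no longer dominates $\mathbf{1}_{Z_\ell(g)}$. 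The paper handles this step differently, without any supremum over $\ell$: it proves a short auxiliary result (Lemma~\ref{open in product topology}) showing that for the positivity set $U_\varphi$ of a continuous function $\varphi$, the map $(\varphi,\nu)\mapsto\nu(U_\varphi)$ is lower semicontinuous on $C^0(M,\mathbb{R})\times\M^1(M)$. Applying this with $\varphi=\phi_\ell(g,\cdot)/\ell-\Gamma$ shows directly that $\{(g,\nu):\nu(U_g^\ell)>1-\epsilon\kappa\}$ is open, which replaces your $H_L$ device entirely.
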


Before proving Proposition~\ref{density of hyperbolic times} we need an auxiliary result. There is a  well known characterization of weak* convergence of probability measures on a compact metric space, saying  that a sequence of measures $\mu_n$ converges to $\mu$ if and only if $\liminf_{n \rightarrow \infty} \mu_n (U)\geq \mu(U)$ whenever $U$ is an open set. In other words, the function
\begin{equation*}
\M^1(M) \ni \mu \mapsto \mu(U) \in \mathbb{R}
\end{equation*}
is lower semi-continuous whenever $U \subset M$ is open. Lemma~\ref{open in product topology} can be seen as a slight variation of that. 

Let $C^0(M, \mathbb{R})$ be the space of continuous functions $M \to \mathbb{R}$ endowed with the $C^0$ topology.

\begin{lemma} \label{open in product topology}
For any $\varphi \in C^0(M, \mathbb{R})$ denote by $U_{\varphi}\subseteq M$ the (open) set on which $\varphi$ is positive. Then the map
\begin{equation*}
C^0(M, \mathbb{R}) \times \M^1(M) \ni (\varphi,\mu) \mapsto \mu(U_{\varphi}) \in \mathbb{R} 
\end{equation*}
is lower semi-continuous in the product topology on $C^0(M, \mathbb{R})\times \M^1(M)$.
\end{lemma}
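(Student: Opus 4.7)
The plan is to reduce to the standard lower semi-continuity of weak* convergence against indicator functions of open sets, but carried out uniformly in the function variable $\varphi$ as well. The key is to sandwich $\mathbf{1}_{U_\varphi}$ from below by a single continuous function that remains below $\mathbf{1}_{U_{\varphi'}}$ whenever $\varphi'$ is $C^0$-close to $\varphi$.

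First, for each $\delta > 0$ I would define a continuous cut-off $h_\delta:\mathbb{R} \to [0,1]$ by $h_\delta(t)=0$ for $t\le \delta$, $h_\delta(t)=1$ for $t\ge 2\delta$, and affine between, and set $\psi_\delta = h_\delta \circ \varphi$. Then $\psi_\delta$ is continuous with $0 \le \psi_\delta \le \mathbf{1}_{U_\varphi}$, and pointwise $\psi_\delta(x) \nearrow \mathbf{1}_{U_\varphi}(x)$ as $\delta \to 0$, so by dominated convergence
\[
\int \psi_\delta \, d\mu \longrightarrow \mu(U_\varphi) \qquad (\delta \to 0).
\]
Given $\eta>0$, fix $\delta>0$ with $\int \psi_\delta \, d\mu > \mu(U_\varphi) - \eta$.

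Next I would observe the crucial robustness property of $\psi_\delta$: if $\|\varphi' - \varphi\|_\infty < \delta$, then at every point $x$ where $\psi_\delta(x)>0$ one has $\varphi(x)>\delta$, hence $\varphi'(x) > 0$, i.e.\ $x \in U_{\varphi'}$. Therefore $\psi_\delta \le \mathbf{1}_{U_{\varphi'}}$ pointwise. Define the product-topology neighborhood
\[
\mathbf{W} = \Bigl\{(\varphi',\mu'): \|\varphi' - \varphi\|_\infty < \delta, \ \int \psi_\delta\, d\mu' > \int \psi_\delta \, d\mu - \eta \Bigr\},
\]
which is open, since the second condition is an open condition in the weak* topology (continuity of integration against the fixed continuous function $\psi_\delta$). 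For every $(\varphi',\mu') \in \mathbf{W}$,
\[
\mu'(U_{\varphi'}) \ \ge\ \int \psi_\delta\, d\mu' \ >\ \int \psi_\delta \, d\mu - \eta \ >\ \mu(U_\varphi) - 2\eta.
\]
Since $\eta>0$ is arbitrary, this gives lower semi-continuity at the point $(\varphi,\mu)$.

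There is no real obstacle; the only subtle point is the interchange of limits (in $\delta$ and along a net converging to $(\varphi,\mu)$), which is circumvented by fixing $\delta$ \emph{first} based on $\mu$ and $\eta$, and then exploiting that the same continuous function $\psi_\delta$ controls $\mathbf{1}_{U_{\varphi'}}$ from below for all $\varphi'$ in a full $C^0$-neighborhood of $\varphi$. This is exactly what makes the argument work in the product topology rather than only separately in each variable.
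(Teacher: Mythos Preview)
Your proof is correct and follows essentially the same strategy as the paper's: both construct a single continuous function below $\mathbf{1}_{U_\varphi}$ whose $\mu$-integral is close to $\mu(U_\varphi)$ and which remains below $\mathbf{1}_{U_{\varphi'}}$ for all $\varphi'$ in a $C^0$-ball around $\varphi$, then invoke weak*-continuity of integration against that fixed function. The only cosmetic difference is that the paper builds this function via inner regularity (a compact $C\subset U_\varphi$) plus a Urysohn function, whereas you obtain it directly as $h_\delta\circ\varphi$; your construction is slightly more explicit but the logical structure is identical.
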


The proof is straightforward but included for the sake of completeness.

\begin{proof}
Fix some pair $(\varphi,\mu) \in C^0(M, \mathbb{R})\times \M^1(M)$ and an arbitrary $\epsilon>0$. We need to show that there are  open neighborhood $\mathcal{U}\subseteq C^0(M, \mathbb{R})$ of $\varphi$ and $\mathbb{U}\subseteq  \M^1(M)$ of $\mu$  such that $\nu(U_{\phi})>\mu(U_{\varphi})-\epsilon$ for every $(\phi,\nu) \in \mathcal{U}\times \mathbb{U}$.

By regularity of $\mu$ there is some compact set $C\subset U_{\varphi}$ such that $\mu(C)>\mu(U_{\varphi})-\epsilon$. Since $\varphi$ is positive on $C$, it follows by compactness that we can find some number $\beta$ that satisfies $0<\beta<\inf_{x \in C} \varphi(x)$. Observe that $U_{\varphi-\beta} \supset C$.

Let $\mathcal{U}$ be the open ball of radius $\beta$ around $\varphi$ in $C^0(M, \mathbb{R})$. Thus if $\phi \in \mathcal{U}$ and $x \in U_{\varphi-\beta}$ we have $\phi(x)> \varphi(x)- \beta>0$. Hence
$$
C\subset U_{\varphi-\beta} \subset U_{\phi} 
$$
for every $\phi \in \mathcal{U}$.

Let $\rho:M \rightarrow [0,1]$ be a continuous function satisfying $ \rho \vert C = 1$  and $ \rho \vert U_{\varphi-\beta}^c  = 0.$
In particular, for every $\phi \in \mathcal{U}$, 

 $$\mu(U_{\phi})\geq \mu(U_{\varphi-\beta})\geq \int \rho \ d\mu \geq \mu(C)> \mu(U_{\varphi})-\epsilon$$

Now let $\mathbb{U}$ be the open neighborhood of $\mu$ in $\M^1(M)$ defined by
\begin{equation*}
\mathbb{U}=\{\nu \in \M^1(M): \int \rho \ d\nu> \mu(U_{\varphi})-\epsilon \}.
\end{equation*}
Then, if $(\phi,\nu) \in \mathcal{U} \times \mathbb{U}$ we have
\begin{equation*}
\nu(U_{\phi})\geq \nu(U_{\varphi-\beta})\geq \int \rho \ d\nu \geq \nu(C)> \nu(U_{\varphi})-\epsilon. 
\end{equation*}
\end{proof}

\begin{proof}[Proof of Proposition~\ref{density of hyperbolic times}] 

Recall the choice of $0<\sigma<1$ in \eqref{sigma}. We write $\gamma= \log \sigma^{-1}$ and fix some $\Gamma$ with 
\begin{equation}\label{eq:Gamma}
\gamma < \Gamma <  \inf_{\mu \in \Gu(f)} \hat{\la}^c(f,\mu). 
\end{equation} 
We also fix some 
$$L < \inf_{x \in M} \log \|Df^{-1}\vert E_x^{cu}\|^{-1}.$$
For every $\ell \geq 1$ and $g\in \ME$, we define the family $\{U_g^\ell\} $ of opens sets in $M$ by
\begin{equation*}
U_g^\ell = \{ x \in M: \frac{1}{\ell} \log \|(Dg^\ell \vert E_x^{cu})^{-1}\|^{-1} > \Gamma \}\end{equation*}

For every $\mu\in\Gu(f)$, from \eqref{eq:Gamma} we have
$$\Gamma <  \int\lambda^c(f,x)d\mu=\int \liminf  \frac{1}{\ell}\log\|(Df^\ell|E^{cu})^{-1}\|^{-1}d\mu$$
Since $\lambda^c(f,\cdot)$ is $f$-invariant, if $\mu$ is ergodic, then for $\mu$-almost every $x\in M$
\begin{equation}\label{eq:Gamma2}
\lambda^c(f,x)=\lim\inf \frac{1}{\ell}\log\|(Df^\ell|E^{cu}_x)^{-1}\|^{-1}>\Gamma.
\end{equation}
If $\mu$ is not ergodic, then we write $\mu$ as convex combination of ergodic measures (see Proposition~\ref{prop:uM7}), all of them are Gibbs $u$-state satisfiying \eqref{eq:Gamma2}. So any convergent subsequence of   $\frac{1}{\ell}\log\|(Df^\ell \vert E_x^{cu})^{-1}\|^{-1}$ must  converges $\mu$-almost everywhere to some limit larger than $\Gamma$.  Hence, we must have 
\begin{equation*}
\lim_{\ell \rightarrow \infty} \mu(U_f^\ell) = 1.
\end{equation*} 

Fix $\epsilon>0$. Take 
\begin{equation}\label{eq:kappa}
\kappa = \epsilon \frac{\Gamma-\gamma}{\Gamma-L}
\end{equation} 
and choose $\ell_0\geq 1$ so that  for every $\ell\geq\ell_0$ we have
\begin{equation*}
\mu(U_f^\ell)>1-\epsilon \kappa.
\end{equation*}

It follows from Lemma~\ref{open in product topology} that the set
\begin{equation*} 
\mathbf{U} = \{(g,\nu): \nu(U_g^\ell)>1-\epsilon \kappa  \text{ and } 
\inf_{x \in M} \log \|Dg^{-1}\vert E_x^{cu}\|^{-1} > L \}
\end{equation*}
is open in $\Gu(\mathcal{U}_{\mathcal{ME}})$.

Pick any pair $(g,\nu) \in \mathbf{U}$. We shall prove that $(g,\nu)$ satisfies the conclusion of Proposition~\ref{density of hyperbolic times}.  

Consider the function 
\begin{equation*} 
F(x) = \lim_{n \rightarrow \infty} \frac{1}{n} \#\{0 \leq k \leq n-1: g^{\ell k}(x) \in U_g^\ell \}
\end{equation*}
of the frequency of visits to the set $U_g^\ell$. By Birkhoff's Ergodic Theorem  it is well defined $\nu$-almost everywhere and satisfies 
\begin{equation}\label{eq:desF}
\int F \ d\nu = \nu(U_g^\ell )>1-\epsilon \kappa.
\end{equation}

Let $A = \{x \in M: F(x)>1-\kappa\}$. Chebyshev's inequality  and \eqref{eq:desF} gives 
$$\nu(M \setminus A) = \nu(\{x: 1-F(x) \geq \kappa \}) \leq \frac{1}{\kappa} \int 1-F \ d\nu
< \frac{\epsilon \kappa}{\kappa} = \epsilon. $$
In other words, $\nu(A) > 1 - \epsilon$ and the proof will be complete once we have proved that for every $x \in A$,
\begin{equation*}
\liminf_{N \rightarrow \infty} \frac{ | \tau_x^\ell (g) \cap \{1, \ldots, N\}|}{N}  \geq 1-\epsilon .
\end{equation*} 
To this end, fix $x\in A$ and  $N_0\geq 1$ is such that 
\begin{equation}\label{high frequency of visits}
\frac{1}{N} \#\{0 \leq k \leq N-1: g^{\ell k}(x) \in U_g^\ell \} >1-\kappa
\end{equation} 
for every $N \geq N_0$. Let
$$a_i = \frac{1}{\ell} \log \|(Dg^\ell \vert E_{g^{\ell(i-1)}(x)}^{cu})^{-1}\|^{-1}.$$ 
Then (\ref{high frequency of visits}) implies that
\begin{equation*}
\# \{i \in \{1, \ldots, N \} : a_i < \Gamma \} \leq \# \{i \in \{1, \ldots, N \} : a_i \leq \Gamma \} <\kappa N.
\end{equation*} 
We can therefore conclude from Lemma~\ref{Pliss-like lemma} that, for $\epsilon>0$, there exist $\kappa>0$ defined by \eqref{eq:kappa} and there exist $1<n_1< \ldots < n_{m} \leq N$ with $m > (1-\kappa \frac{\Gamma-L}{\Gamma-\gamma}) N = (1-\epsilon)N$ such that
\begin{equation}
\sum_{j=n+1}^{n_i} \frac{1}{\ell} \log \|(Dg^\ell \vert E_{g^{\ell(j-1)}(x)}^{cu})^{-1}\|^{-1} 
=  \frac{1}{\ell} \log  \prod_{j=n+1}^{n_i}\| Dg^{-\ell} \vert E_{g^{\ell j}(x)}^{cu}\|^{-1}  \geq \gamma (n_i-n). \label{hyp time 1}
\end{equation}
for every  $1\leq i \leq m$ and every $0\leq n <n_i$. Writing $k=n_i-n$ and remembering that $\gamma = \log \sigma^{-1}$, (\ref{hyp time 1}) may be more conveniently expressed by
\begin{equation*}
\prod_{j=n_i-k+1}^{n_i} \|Dg^{-\ell}\vert E_{g^{\ell j}(x)}^{cu} \| \leq \sigma^{\ell k}
\end{equation*}
for every $1\leq i \leq m$ and every $1 \leq k\leq n_i$. That is, each $n_i$ is a $\sigma^\ell$ hyperbolic time for $x$ under $g^\ell$.
\end{proof}

\subsection{Pesin blocks of uniform measure}

We now change our focus a bit. Fix $0<\sigma<1$. Instead of considering hyperbolic times of a given point $x$, we consider the set  
\begin{equation}\label{eq:deflambdan}
\Lambda_\ell^n(f,\sigma) = \{x\in M\::\:  \prod_{j=0}^{k-1} \|Df^{-\ell}\vert E_{f^{-\ell j}(x)}^{cu}\| \leq \sigma^{\ell k}, \, \forall 1 \leq k \leq n\}
\end{equation}
of points which are hyperbolic time iterates of some other point. We are particularly interested in the set 
\begin{equation}\label{eq:deflambda}
\Lambda_\ell (f,\sigma) = \bigcap_{n \geq 1} \Lambda_\ell^n(f,\sigma),
\end{equation}
which we call a \emph{Pesin like block} of $f$. When it is not necessary to emphasize the dependency of $\sigma$,  we  write $\Lambda_\ell^n(f)=\Lambda_\ell^n(f,\sigma)$ or $\Lambda_\ell (f)=\Lambda_\ell (f,\sigma)$ respectively in order to simplify notation. 

\begin{remark}
The Pesin like blocks $\Lambda_\ell (f)$ are different from the Pesin blocks $\Bl(\ell, f^{-1})$ considered by Avila and Bochi  in \cite{AB2012}. For example, for points in $\Lambda_\ell(f)$, the Lyapunov exponent in the $E^{cu}$ bundle is bounded below by a fixed number $\log \sigma^{-1}$, whereas for points in $\Bl(\ell, f^{-1})$, they are bounded below by $1/\ell$. Our notion is therefore more restrictive, and suitable to a situation where Lyapunov exponents are almost everywhere bounded away from zero with respect to a relevant set of measures (which is not the case in \cite{AB2012}). A main ingredient in our work is that $\Lambda_\ell(f)$ has large $\mu$-measure for large $\ell$  and $\mu \in \Gcu(f)$ in a way which is uniform in a neighborhood of $f$ (see Lemma~\ref{large pesin blocks}). It is for this reason that we have proved the Pliss-like Lemma (Lemma~\ref{Pliss-like lemma}). Avila and Bochi obtain similar results for the set $\Bl(\ell, f^{-1})$ using a very elegant application of the Maximal Ergodic Theorem . The current work could perhaps be made a few pages shorter by working with $\Bl(\ell, f^{-1})$ rather than $\Lambda_\ell(f)$ and making use of their results. However, we think that our estimates on the size of $\Lambda_\ell(f)$ is of independent interest, as well as being more intuitive for those who are used to arguments involving Pliss' Lemma.
\end{remark}

\begin{lemma}\label{large hyperbolic set of iterate}
Given $f:M \rightarrow M$ mostly expanding, $\mu \in \Gu(f)$, a number $0<\sigma<1$ satisfying (\ref{sigma}) and $\epsilon>0$, there exist a neighborhood $\mathbf{U}$ of $(f,\mu)$ in $\Gu(\mathcal{U}_{\mathcal{ME}})$ and an integer $\ell_0\geq 1$ such that $\nu(\Lambda_\ell (g, \sigma)) > 1-\epsilon$ for every $(g,\nu) \in \mathbf{U}$ and every $\ell \geq \ell_0$.
\end{lemma}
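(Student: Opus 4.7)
The plan is to apply Proposition~\ref{density of hyperbolic times} with a parameter $\epsilon'\in(0,\epsilon/2)$ in place of $\epsilon$, obtaining a neighborhood $\mathbf{U}$ of $(f,\mu)$ in $\Gu(\mathcal{U}_{\mathcal{ME}})$ and an integer $\ell_0\geq 1$ so that for every $(g,\nu)\in\mathbf{U}$ and every $\ell\geq\ell_0$ there exists $A\subset M$ with $\nu(A)>1-\epsilon'$ on which
\[ \liminf_{N\to\infty}\frac{1}{N}\bigl|\tau_x^\ell(g,\sigma)\cap\{1,\ldots,N\}\bigr|\geq 1-\epsilon'. \]
This is a forward-orbit frequency statement, whereas the target is a measure estimate on $\Lambda_\ell(g,\sigma)$, a set whose definition is couched in terms of backward orbits. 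The upgrade will come from a simple duality between the two.

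Setting $H_n:=\{z\in M:n\in\tau_z^\ell(g,\sigma)\}$, a direct unpacking of \eqref{eq:deflambdan} via the change of index $i=n-j$ shows that $x\in\Lambda_\ell^n(g,\sigma)$ if and only if $n$ is a $\sigma^\ell$-hyperbolic time for $g^{-\ell n}(x)$ under $g^\ell$. Equivalently,
\[ \Lambda_\ell^n(g,\sigma)=g^{\ell n}(H_n), \]
so the $g$-invariance of $\nu$ yields $\nu(\Lambda_\ell^n(g,\sigma))=\nu(H_n)$ for every $n\geq 1$. Applying Fatou's lemma to the $[0,1]$-valued functions $\phi_N:=\frac{1}{N}\sum_{n=1}^{N}\mathbf{1}_{H_n}$, and using $\liminf_N\phi_N\geq(1-\epsilon')\mathbf{1}_A$ $\nu$-almost everywhere, one gets
\[ \liminf_{N\to\infty}\frac{1}{N}\sum_{n=1}^{N}\nu(H_n)=\liminf_{N\to\infty}\int\phi_N\,d\nu\geq\int\liminf_N\phi_N\,d\nu\geq(1-\epsilon')\nu(A)\geq(1-\epsilon')^2. \]

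To finish, I would note that the sets $\Lambda_\ell^n(g,\sigma)$ form a decreasing family in $n$ (each increment imposes one further product condition), so $\nu(\Lambda_\ell^n)=\nu(H_n)$ is monotone in $n$; a bounded monotone real sequence converges and shares its limit with its Ces\`aro averages. Combined with continuity of $\nu$ along the nested intersection in \eqref{eq:deflambda}, this yields
\[ \nu(\Lambda_\ell(g,\sigma))=\lim_{n\to\infty}\nu(\Lambda_\ell^n(g,\sigma))=\lim_{N\to\infty}\frac{1}{N}\sum_{n=1}^{N}\nu(H_n)\geq(1-\epsilon')^2>1-\epsilon, \]
as required. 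The only genuinely delicate point is the bridge between the forward-orbit assertion produced by Proposition~\ref{density of hyperbolic times} and the backward-orbit assertion demanded by Lemma~\ref{large hyperbolic set of iterate}; the pushforward identity $\Lambda_\ell^n=g^{\ell n}(H_n)$ together with invariance is exactly what reconciles the two, after which Fatou plus the monotone/Ces\`aro observation finish the proof.
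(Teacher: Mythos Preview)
Your proof is correct and follows essentially the same route as the paper's. Both arguments hinge on the same duality $\Lambda_\ell^n(g,\sigma)=g^{\ell n}(H_n)$ (the paper expresses it as $\sum_{n=1}^{N}\chi_{\Lambda_\ell^n}\circ g^{n\ell}(x)=|\tau_x^\ell\cap\{1,\ldots,N\}|$), then bound the Ces\`aro averages $\frac{1}{N}\sum_{n=1}^{N}\nu(\Lambda_\ell^n)$ from below by $(1-\epsilon')^2$ and use that $\nu(\Lambda_\ell^n)$ is monotone to pass to the intersection. The only difference is cosmetic: where the paper introduces the auxiliary sets $A_n=\{x:\inf_{k\geq n}\frac{1}{k}|\tau_x^\ell\cap\{1,\ldots,k\}|>1-\epsilon'\}$ and $B_n\supset A_n$ to locate a threshold $N$ beyond which the average is large, you obtain the same lower bound on the Ces\`aro averages directly via Fatou's lemma. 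Your presentation is arguably cleaner, but the mathematical content is identical.
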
 

\begin{proof}
Fix $(f,\mu) \in \Gu(\mathcal{U}_{\mathcal{ME}})$ and $\epsilon>0$ arbitrarily. Choose some $\epsilon'>0$ small enough that 
\begin{equation}\label{eq:epsilon}
(1-\epsilon')^2>1-\epsilon.
\end{equation}

Proposition~\ref{density of hyperbolic times} guarantees the existence of an open neighborhood $\mathbf{U}$ of $(f,\mu)$ in $\Gu(\mathcal{U}_{\mathcal{ME}})$ and a positive integer $\ell_0\geq1$ such that, given any $(g,\nu) \in \mathbf{U}$, there is some set $A \subset M$, with $\nu(A)>1-\epsilon'$, such that 
\begin{equation*}
\liminf_{N \rightarrow \infty} \frac{ | \tau_x^\ell (g) \cap \{1, \ldots, N\}|}{N}  > 1-\epsilon'  
\end{equation*} 
for every $x \in A$. We will prove that if $(g,\nu)$ belongs to $\mathbf{U}$, then $\nu(\Lambda_\ell^n(g))>1-\epsilon$. 
Let
\begin{equation*}
A_n=\{x \in M: \inf_{k \geq n}  \frac{ | \tau_x^\ell (g) \cap \{1, \ldots, k\}|}{k}>1-\epsilon'\}.
\end{equation*}
Note that $A_n$ is an increasing sequence of measurable sets  such that $A\subset \cup A_n$ and, by our choice of $\mathbf{U}$ and $\ell$, we have that $\nu(\bigcup_{n \in \mathbb{N}} A_n)>1-\epsilon'$. Therefore, we can (and do) fix some integer $N\geq 1$ such that $\nu(A_N)>1-\epsilon'$. Likewise, let
\begin{equation*}
B_n = \{ x \in M: \frac{ | \tau_x^\ell (g) \cap \{1, \ldots, n\}|}{n}>1-\epsilon'\}.
\end{equation*}
The sequence $B_n$ does not have to be increasing, but we have $B_n \supset A_n$ for every $n \in \mathbb{N}$ so that, in particular, 

\begin{equation}\label{eq:measureBn}
\nu(B_N)> 1-\epsilon'.
\end{equation}

Denotes by $\chi_B$ the characteristic function of a Borelean set $B\subseteq M$. Observe that
\begin{equation*}
\sum_{n=1}^N \chi_{\Lambda_\ell^n(g)}\circ g^{n\ell} (x) = |\tau_x^\ell \cap \{1, \ldots, N\}|
\end{equation*}
for every $x \in M$. Consequently

\begin{eqnarray*} 
\nu(\Lambda_\ell^n(g))  &=& \int \chi_{\Lambda_\ell^n(g)} \ d\nu \\
&=& \int \frac{1}{N} \sum_{n=1}^N \chi_{\Lambda_\ell^n(g)}\circ g^{n\ell} \ d\nu \\
&\geq& \int_{B_N}  \frac{1}{N} \sum_{n=1}^N  \chi_{\Lambda_\ell^n(g)}\circ g^{n\ell} \ d\nu \\
&\geq& \int_{B_N} 1-\epsilon' \ d\nu.
\end{eqnarray*}
Recalling  \eqref{eq:measureBn} and \eqref{eq:epsilon}, we obtain 

$$\nu(\Lambda_\ell^n(g)) >\int_{B_N} 1-\epsilon' \ d\nu>(1-\epsilon')^2>1-\epsilon.$$

Recall that $\Lambda_\ell(g) = \bigcap_n \Lambda_\ell^n (g)$, and that the $\Lambda_\ell^n(g)$ form a nested decreasing sequence in $n$. The proof follows readily. 
\end{proof}

%\begin{remark}\label{rmk:singlemu} Let $f:M\to M$ be a $C^r$-partially hyperbolic diffeomorphism, $r>1$, and let $\mu$ be a Gibbs $u$-state for $f$ such that for $\mu$-almost every point $x\in M$, $\lambda^c(f,x)>0$. Let  $\hat{\lambda}^c(f,\mu):=\int\lambda^c(f,x)d\mu>0$, and fix any $0<\sigma<1$  such that $\log \sigma^{-1}<\hat{\lambda}^c(f,\mu)$. Then the pertinent conclusions of Proposition~\ref{density of hyperbolic times} and Lemma~\ref{large hyperbolic set of iterate} remain valid for the system $(f,\mu)$. In particular, given  $\epsilon>0$, there exist an integer $\ell_0:=\ell_0(f,\mu, \sigma)\geq 1$ such that $\mu(\Lambda_\ell (f,\sigma)) > 1-\epsilon$ for every  $\ell \geq \ell_0$.  Nevertheless uniformity of $\ell_0$ with respect of  $f$, $\mu$ and $\sigma$  follows from the mostly expanding condition and compactness of Gibbs $u$-states as we see in the next lemma.
%\end{remark}

Next Lemma is an improvement of Lemma \ref{large hyperbolic set of iterate}. It says that, not only can  the number $\ell_0$ in Lemma \ref{large hyperbolic set of iterate} be taken uniform in a neighbourhood of the pair $(f,\mu)$ in $\Gu(\mathcal{U}_{\mathcal{ME}})$; it can indeed be choosen so that it holds simultaneously for every Gibbs $u$-state of every diffeomophism in a neighbourhood of $f$.

\begin{lemma}\label{large pesin blocks}
Let $f:M \to M$ be a $C^r$  mostly expanding diffeomorphism, $r>1$. Given any $\epsilon>0$, there exists $\ell_0$ and a $C^r$ neighborhood $\mathcal{U}$ of $f$ such that $\nu(\Lambda_\ell(g))>1-\epsilon$ for every $\ell \geq \ell_0$, $g \in \mathcal{U}$ and $\nu \in \Gu(g)$.
\end{lemma}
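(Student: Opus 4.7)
The plan is to promote Lemma~\ref{large hyperbolic set of iterate} from a local statement (valid in a neighborhood of one pair $(f,\mu)$) to one which is uniform over all Gibbs $u$-states simultaneously, using the compactness of $\Gu(f)$ together with the upper semicontinuity of $g \mapsto \Gu(g)$ from Proposition~\ref{prop:uM7}.

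Fix $\varepsilon > 0$. For each $\mu \in \Gu(f)$, Lemma~\ref{large hyperbolic set of iterate} supplies an open neighborhood $\mathbf{U}_\mu$ of $(f,\mu)$ in $\Gu(\mathcal{U}_{\mathcal{ME}})$, and an integer $\ell_\mu \geq 1$, such that $\nu(\Lambda_\ell(g)) > 1 - \varepsilon$ for every $(g,\nu) \in \mathbf{U}_\mu$ and every $\ell \geq \ell_\mu$. Since $\{f\} \times \Gu(f)$ is compact in $\Gu(\mathcal{U}_{\mathcal{ME}})$ (the first factor is a point and the second is compact by Proposition~\ref{prop:uM7}), we may extract a finite subcover $\mathbf{U}_{\mu_1}, \ldots, \mathbf{U}_{\mu_k}$, and set
\[
\mathbf{U} \;=\; \bigcup_{i=1}^{k} \mathbf{U}_{\mu_i}, \qquad \ell_0 \;=\; \max_{1 \leq i \leq k} \ell_{\mu_i}.
\]
By construction, $\nu(\Lambda_\ell(g)) > 1 - \varepsilon$ whenever $(g,\nu) \in \mathbf{U}$ and $\ell \geq \ell_0$. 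It remains to find a $C^r$ neighborhood $\mathcal{U}$ of $f$ such that for every $g \in \mathcal{U}$ and every $\nu \in \Gu(g)$, the pair $(g,\nu)$ belongs to $\mathbf{U}$.

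This is the only nontrivial step, and it is essentially a tube-lemma argument driven by upper semicontinuity of the Gibbs $u$-state map. Since $\mathbf{U}$ is open in $\Gu(\mathcal{U}_{\mathcal{ME}})$, write $\mathbf{U} = \widetilde{\mathbf{U}} \cap \Gu(\mathcal{U}_{\mathcal{ME}})$ for some $\widetilde{\mathbf{U}}$ open in $\textrm{Diff}^r(M) \times \M^1(M)$. Suppose for contradiction that no such $\mathcal{U}$ exists. Then we can choose a sequence $g_n \to f$ in the $C^r$ topology together with $\nu_n \in \Gu(g_n)$ such that $(g_n, \nu_n) \notin \widetilde{\mathbf{U}}$. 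By compactness of $\M^1(M)$, pass to a subsequence so that $\nu_n \to \nu$ in the weak* topology. The upper semicontinuity of $g \mapsto \Gu(g)$ (Proposition~\ref{prop:uM7}) forces $\nu \in \Gu(f)$, so $(f,\nu) \in \{f\} \times \Gu(f) \subset \widetilde{\mathbf{U}}$. But $\widetilde{\mathbf{U}}$ is open and $(g_n, \nu_n) \to (f, \nu)$, so $(g_n, \nu_n) \in \widetilde{\mathbf{U}}$ for all large $n$, a contradiction.

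Thus such a $C^r$ neighborhood $\mathcal{U}$ of $f$ exists, and for any $g \in \mathcal{U}$, $\nu \in \Gu(g)$, and $\ell \geq \ell_0$, we have $\nu(\Lambda_\ell(g)) > 1 - \varepsilon$, as required. The main obstacle in this proof is precisely the passage from the fiberwise local statement to a uniform base-space neighborhood; the key conceptual point is that the closedness of $\Gu(\mathcal{U}_{\mathcal{ME}})$ in $\mathcal{U}_{\mathcal{ME}} \times \M^1(M)$, combined with weak* compactness of $\M^1(M)$, plays the role of the classical tube lemma for compact-fibered projections.
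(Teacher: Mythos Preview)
Your proof is correct and follows essentially the same approach as the paper: apply Lemma~\ref{large hyperbolic set of iterate} pointwise over $\Gu(f)$, extract a finite subcover by compactness (Proposition~\ref{prop:uM7}), take the maximum of the finitely many $\ell_\mu$, and then use upper semicontinuity of $g\mapsto\Gu(g)$ to pass from an open set in $\Gu(\mathcal{U}_{\mathcal{ME}})$ to a genuine $C^r$ neighborhood of $f$. The only cosmetic difference is in this last step: the paper refines each $\mathbf{U}_i$ to a product box $\mathcal{U}_i\times\mathbb{U}_i$ and intersects the first factors, whereas you run a sequential tube-lemma argument directly; both accomplish the same thing.
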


\begin{proof}
Fix $\epsilon>0$. 
 Since $\Gu(f)$ is compact (see Proposition~\ref{prop:uM7}), it follows from Lemma~\ref{large hyperbolic set of iterate} that there are open sets $\mathbf{U}_1,\dots,\mathbf{U}_n\subset \Gu(\mathcal{U}_{\mathcal{ME}})$, where $\mathbf{U}_i=\mathcal{U}_i \times \mathbb{U}_i$,  and integers $\ell_1, \ldots, \ell_n$ such that 
\begin{enumerate}
\item[(i)] $\Gu(f) \subset  \mathbb{U}_1 \cup \ldots \cup  \mathbb{U}_n$, and
\item[(ii)] $\nu(\Lambda_{\ell} (g))>1-\epsilon$ whenever $(g,\nu) \in \mathbf{U}_i$ for some $i=1, \ldots, n$ and every $\ell\geq \ell_i$.
\end{enumerate} 

Let $\mathcal{U} = \mathcal{U}_1 \cap \ldots \cap \mathcal{U}_n$ and $\mathbb{U} = \mathbb{U}_1 \cup \ldots \cup \mathbb{U}_n$. Then
\begin{equation*}
 \Gu(f) \subset \mathcal{U} \times \mathbb{U} \subset \bigcup_{i=1}^{n} \mathcal{U}_i \times \mathbb{U}_i.
 \end{equation*}
 It follows from Proposition~\ref{prop:uM7} that, upon possibly reducing $\mathcal{U}$, we may (and do) suppose that $\Gu(g) \subset \mathcal{U} \times \mathbb{U}$ for every $g \in \mathcal{U}$. Let $\ell_0= \prod_{i=1}^n \ell_i $.  Given any $g \in \mathcal{U}$ and any $\nu \in \Gu(g)$ there exists some $i = 1, \ldots, n$ such that $(g,\nu) \in \mathcal{U}_i \times \mathbb{U}_i$. Since $\ell_0\geq \ell_i$, for every $i=1,\dots, n$, from  (ii) above we have  $\nu(\Lambda_\ell(g)) >1-\epsilon$ for every $\ell\geq \ell_0$. 
 
\end{proof}

\subsection{Unstable manifolds and uniform densities}

Central to our argument is that the size of local unstable manifolds can be controlled on the sets $\Lambda_\ell(f,\sigma)$, uniformly in a neighbourhood of a given mostly expanding diffeomorphism. This was done from scratch in \cite[Theorem 4,7]{AB2012} using graph transforms. In this work some further properties of unstable manifolds are needed which are not stated in \cite[Theorem 4,7]{AB2012}.

\begin{Theorem}\label{unstable manifolds}
Let $f: M \to M$ be a $C^r$ mostly expanding diffeomorphism, with $r>1$. Then, given any $\ell \in \N$, there are a $C^r$ neighborhood $\cU$ of $f$ and  numbers  $r=r(\ell)>0, C=C(\ell)\geq 0, \delta=\delta(\ell)>0$ for which the following holds:
\begin{enumerate}
\item Given any $g \in \cU$  and $x \in \Lambda_\ell(g, \sigma)$, there is a $C^1$ embedded disk $W_{r}^{cu}(g,x)$ of dimension $\dim E^{cu}$  and radius ${r}>0$, centered at $x$, such that 
\[T_y W_{r}^{cu}(g,x) =E_y^{cu}(g)\]
 for every $y \in W_{r}^{cu}(g,x)$. 
 
\item $W_{r}^{cu}(g,x)$ depends continuously on both $x$ and $g$  in the $C^1$ topology;

\item $W_{r}^{cu}(g,x) \subset \Lambda_\ell(g,\sigma^{1/2})$;

\item if $y \in W_{r}^{cu}(g,x)$, then 
\[\di(f^{-n}(x),f^{-n}(y)) \leq C\sigma^{n/2}\di(x,y)\]
for every $n\geq 0$;

\item if  $y,z  \in \Lambda_\ell (g, \sigma) \cap B_{\delta} (x)$, then either 
\[W_{r}^{cu}(g,y)\cap W_{r}^{cu}(g, z) = \emptyset\]
 or  
\[W_{r}^{cu}(g, y) \cap B_{2 \delta}(x) = W_{r}^{cu}(g, z)\cap B_{2 \delta}(x).\]
\end{enumerate}
\end{Theorem}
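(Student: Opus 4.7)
The plan is to adapt the standard graph transform construction used in the proof of \cite[Theorem 4.7]{AB2012}, with added attention to the continuity in $(g,x)$ and to the local product structure (5). The key input is that the defining condition \eqref{eq:deflambdan} of $\Lambda_\ell(g,\sigma)$ provides uniform multiplicative backward contraction along the $g^{-\ell}$-orbit of $x$: namely, $\prod_{j=0}^{k-1}\|Dg^{-\ell}|E^{cu}_{g^{-\ell j}(x)}\|\leq \sigma^{\ell k}$ for every $k\geq 1$. Combined with partial hyperbolicity, the $C^r$-continuous dependence of the splitting $E^s\oplus E^{cu}$ on $g$ (\cite[Corollary 2.17]{HP2006}), and H\"older continuity of the bundle $E^{cu}(g)$, these estimates extend to nearby diffeomorphisms $g \in \cU$ with a controlled loss of exponent, say from $\sigma$ to $\sigma^{1/2}$.

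First I would fix a small $\alpha>0$ and build, for each $y$ in a small neighborhood of any point $g^{-n\ell}(x)$ of the backward orbit, a cone field $\cC_\alpha(y)$ of opening $\alpha$ around $E^{cu}_y(g)$. Domination and the multiplicative estimate above imply that for $\alpha$ small enough (depending only on $\ell$ and $f$, and therefore uniform on a $C^r$ neighborhood $\cU$), the cone field is strictly $Dg^{-\ell}$-invariant along the orbit of $x$ and $Dg^{-\ell}$ contracts any vector in it by at least $\sigma^{\ell/2}$. Standard Hadamard–Perron graph transform applied to a disk of fixed radius $r_0=r_0(\ell)>0$ through $g^{-n\ell}(x)$, tangent to $\cC_\alpha$, then produces, after $n$ forward iterates of $g^\ell$, a disk $D_n$ through $x$ of radius bounded below by some $r=r(\ell)>0$ and tangent to the cone $\cC_\alpha$. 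The $C^{1+\alpha'}$ graph transform estimates provided by $C^r$ regularity ($r>1$) give that $D_n$ is Cauchy in the $C^1$ topology, and the limit is the desired disk $W^{cu}_r(g,x)$, tangent to $E^{cu}$ at every point; this also gives (2), since the whole construction depends continuously on the data $(g,x)$ via the cone fields and the chart-wise derivative of $g^\ell$.

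Item (4) is built into the graph transform: the derivative of $g^{-1}$ restricted to $T W^{cu}_r(g,x)$ is close to $Dg^{-1}|E^{cu}$, so iterating backward contracts distances within the disk at rate $\sigma^{1/2}$ (up to a uniform multiplicative constant $C(\ell)$ arising from the non-conformality of $Dg^{-1}$ over $\ell$ iterates). For item (3), given $y \in W^{cu}_r(g,x)$, the exponential decay of $\di(g^{-\ell j}(x),g^{-\ell j}(y))$ combined with H\"older continuity of $E^{cu}$ shows that the partial products $\prod_{j=0}^{k-1}\|Dg^{-\ell}|E^{cu}_{g^{-\ell j}(y)}\|$ differ from those at $x$ by a factor that is summable in $k$; choosing $r$ sufficiently small ensures the bound $\sigma^{\ell k/2}$ for all $k$, so $y \in \Lambda_\ell(g,\sigma^{1/2})$. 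For item (5), if $W^{cu}_r(g,y)\cap W^{cu}_r(g,z)\neq \emptyset$ at some point $w \in B_{2\delta}(x)$, both disks are graphs over $E^{cu}_w$ of functions that are fixed points of the same graph transform (the one associated to the backward orbit of $w$, which is also a Pesin-like point by (3)); uniqueness of such a fixed point, together with an explicit bound of $r$ versus $\delta$, forces the two disks to coincide on $B_{2\delta}(x)$.

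The main obstacle will be to ensure that all constants $r(\ell)$, $C(\ell)$, $\delta(\ell)$ and the cone opening $\alpha$ can be chosen uniformly for $g$ in a whole $C^r$ neighborhood $\cU$ of $f$ and for all $x \in \Lambda_\ell(g,\sigma)$ simultaneously. This relies on compactness of $M$, uniform bounds on $\|Dg\|$ and on the H\"older modulus of $E^{cu}(g)$ for $g \in \cU$, and the fact that the key estimate in the definition of $\Lambda_\ell$ is itself uniform (the bound $\sigma^{\ell k}$ does not depend on the point). The secondary subtlety is the uniqueness argument behind (5): one must verify that the graph transform attached to the backward orbit of $w$ has a unique fixed point in the space of $C^1$ disks through $w$ tangent to $\cC_\alpha$, which is the classical contraction-mapping argument but needs to be carried out in our uniform setting.
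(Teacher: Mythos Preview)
Your approach is correct in outline but takes a genuinely different route from the paper. The paper does not build $W_r^{cu}(g,x)$ as a limit of graph transforms; instead it invokes the locally invariant plaque family theorem of Hirsch--Pugh--Shub (in the version of \cite{CP2015} that includes continuous dependence on $g$) for the dominated splitting $E^{cu}\prec E^s$ of $g^{-1}$, and simply \emph{defines} $W_r^{cu}(g,x)$ to be the plaque $D_g(x,r)$ of suitably small radius. Items (iii) and (iv) then follow from an elementary finite induction on the plaques (no Cauchy sequence of disks, no limiting procedure), and item (i) is proved \emph{a posteriori} by contradiction: if some tangent vector to the plaque were not in $E^{cu}$, backward iterates would align with $E^s$ by domination, contradicting the uniform transversality of the plaque family. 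For (v), rather than invoking uniqueness of a graph-transform fixed point, the paper gives the classical self-coherence argument via stable manifolds: if the intersection of two disks were not relatively open in both, one would find two distinct points on the same local stable leaf whose backward orbits remain uniformly bounded, contradicting backward expansion along $W^s$.

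Your route is more self-contained---it does not rely on the plaque family theorem and stays closer in spirit to \cite{AB2012}---but it carries more analytic overhead (cone fields, H\"older estimates on $E^{cu}$, $C^1$ convergence of the $D_n$). The paper's route is shorter because continuity in $(g,x)$ and the $C^1$ structure of the disks come for free from the plaque family; the only real work is the induction showing that backward $g^\ell$-iterates of $D_g(x,r)$ stay inside the $c_1$-neighborhood where the $\sigma^{-1/2}$-distortion bound holds. One small caution on your argument for (v): the intersection point $w$ lies only in $\Lambda_\ell(g,\sigma^{1/2})$ by (iii), not in $\Lambda_\ell(g,\sigma)$, so the graph transform you associate to $w$ must be set up with the weaker exponent; this is harmless but should be made explicit.
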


We give a complete proof of Theorem \ref{unstable manifolds}. Our approach is a combination of the methods of \cite{ABV2000} and \cite{Abdenur2011}. In particular, we make use of so-called locally invariant plaque families introduced in \cite[Theorem 5.5]{HPS1977}. Unfortunately, \cite{HPS1977} only considers plaque families of a single diffeomorphism, but it is implicit in the construction that these plaques depend continuously, not only on the point in the manifold, but also as a function of the diffeomorphism in question. We cite a version of the plaque family theorem due to \cite[Lemma 3.5]{CP2015} that takes this into account.

\begin{Theorem}[\cite{HPS1977,CP2015}]\label{plaques}
Suppose that a diffeomorphism $f: M \to M$ has a dominated splitting $F \prec G$. Then there exist a $C^1$ neighborhood $\cU$ of $f$, a number $\rho>0$, and a continuous family of embeddings
\[\cU \times M \ni (g,x) \mapsto \Phi_{g,x} \in \Emb^1(\R^{\dim(F)}, M) \]
such that for every $g \in \cU$ and every $x \in M$ we have 
\begin{itemize}
\item $D \Phi_{g,x} (0) = F_x (g) \text{, and}$
\item $g ( \Phi_{g,x}(B_{\rho}(0))) \subset \Phi_{g,g(x)} (\R^{\dim F})$.
\end{itemize}
\end{Theorem}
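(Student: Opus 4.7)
The plan is to apply Theorem \ref{plaques} to $f^{-1}$ to obtain a continuous family of $C^1$-embedded $cu$-plaques, and then use the hyperbolic-times data encoded in $\Lambda_\ell(g,\sigma)$ to upgrade these plaques to genuine unstable disks satisfying all five listed properties.

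First, observe that for any $g$ in a $C^1$ neighborhood $\mathcal{V}$ of $f$, the partial-hyperbolicity relation $E^s \prec E^{cu}$ for $g$ becomes the dominated splitting $E^{cu} \prec E^s$ for $g^{-1}$. Applying Theorem \ref{plaques} with $F = E^{cu}$ to the inverse map, I obtain a number $\rho>0$ and a continuous family
\[\mathcal{V} \times M \ni (g,x) \mapsto \Phi_{g,x} \in \Emb^1(\R^{\dim E^{cu}}, M)\]
of embeddings with $D\Phi_{g,x}(0)\cdot \R^{\dim E^{cu}} = E^{cu}_x(g)$ and the backward local invariance
\[g^{-1}(\Phi_{g,x}(B_\rho(0))) \subset \Phi_{g,g^{-1}(x)}(\R^{\dim E^{cu}}).\]
For $x \in \Lambda_\ell(g,\sigma)$ I will then define $W_r^{cu}(g,x) := \Phi_{g,x}(B_r(0))$ for a sufficiently small $r = r(\ell) > 0$. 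Property (2) is inherited directly from the continuity of $(g,x) \mapsto \Phi_{g,x}$ in the $C^1$ topology.

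The heart of the argument is a cone-invariance / graph-transform computation along backward orbits, similar in spirit to \cite[Section 2]{ABV2000} and \cite{Abdenur2011}. Since $x \in \Lambda_\ell(g,\sigma)$ satisfies
\[\prod_{j=0}^{k-1} \|Dg^{-\ell}|E^{cu}_{g^{-\ell j}(x)}\| \leq \sigma^{\ell k} \qquad \forall k\geq 1,\]
the backward iterates $g^{-\ell k}(W_r^{cu}(g,x))$ are contained in the plaques at $g^{-\ell k}(x)$ and, provided $r$ is small enough compared to $\rho$ and the cone-width of the plaques, shrink exponentially. The standard cone-invariance argument (using that plaque tangents start in a narrow cone around $E^{cu}$ and that backward iterates by $g^{-\ell}$ contract this cone) then shows that the tangent spaces of the backward iterates converge exponentially in $C^0$ to $E^{cu}$. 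Pushing forward and invoking the Hölder continuity of $Dg$ (here $r>1$ is genuinely needed) upgrades the tangency everywhere along the plaque, giving (1). Property (4) with rate $\sigma^{1/2}$ and a constant $C = C(\ell)$ is a direct consequence of the same backward contraction: the loss from $\sigma$ to $\sigma^{1/2}$ absorbs non-linear distortion over a single $\ell$-block, while $C$ absorbs the intermediate iterates that are not multiples of $\ell$ via the uniform bound on $\|Dg^{-1}|E^{cu}\|$ coming from partial hyperbolicity.

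Property (3) will follow from (4) by a telescoping argument: if $y \in W_r^{cu}(g,x)$, then (4) gives $d(g^{-\ell j}(x), g^{-\ell j}(y)) \leq C\sigma^{\ell j/2} d(x,y)$, and the Hölder continuity of $z \mapsto \log\|Dg^{-\ell}|E^{cu}_z\|$ (again using $r > 1$) implies
\[\bigl|\log\|Dg^{-\ell}|E^{cu}_{g^{-\ell j}(y)}\| - \log\|Dg^{-\ell}|E^{cu}_{g^{-\ell j}(x)}\|\bigr| \leq K\bigl(C\sigma^{\ell j/2} d(x,y)\bigr)^\alpha\]
for some $\alpha \in (0,1]$ and constant $K$; summing over $j$ produces a finite error, and taking $r$ small enough degrades the exponential rate at $y$ from $\sigma$ only as far as $\sigma^{1/2}$. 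Property (5) is the standard local coherence: if $W_r^{cu}(g,y)$ and $W_r^{cu}(g,z)$ share a point $w$, then both coincide with the locally unique plaque through $w$ provided by Theorem \ref{plaques}, and choosing $\delta = \delta(\ell)$ much smaller than $r$ forces the two disks to agree on $B_{2\delta}(x)$; otherwise they are disjoint. I expect the main obstacle to be the cone-invariance bootstrap that turns a plaque only \emph{approximately} tangent to $E^{cu}$ into a disk tangent to $E^{cu}$ at every point, together with ensuring $r, C, \delta$ are simultaneously uniform in $g \in \cU$ and in $x \in \Lambda_\ell(g,\sigma)$; the first uses the Hölder regularity from $r>1$, and the second follows because $\rho$ in Theorem \ref{plaques} is uniform in $(g,x)$ and the hyperbolic-times bound in the definition of $\Lambda_\ell(g,\sigma)$ is the same at every point.
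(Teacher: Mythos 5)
You have written a proof of Theorem~\ref{unstable manifolds}, not of the stated Theorem~\ref{plaques}. Theorem~\ref{plaques} is the locally invariant plaque-family theorem cited from \cite{HPS1977,CP2015}; it has exactly two bullet points (center tangency and local $g$-invariance) and is not proved anywhere in this paper. Your proposal, by contrast, begins by ``apply[ing] Theorem~\ref{plaques} to $f^{-1}$'' and then sets out to verify ``all five listed properties,'' which are the five items (i)--(v) of Theorem~\ref{unstable manifolds}. As a proof of the statement you were actually given, this is circular: one cannot use Theorem~\ref{plaques} as a black box to establish Theorem~\ref{plaques}. The proof of Theorem~\ref{plaques} itself is a graph-transform argument of Hadamard--Perron type on the Banach manifold of candidate sections, in which the contraction of the transform and the continuity in $(g,x)$ come from the domination $F\prec G$ and from the continuous dependence of $F(g),G(g)$ on $g$; none of the machinery you invoke ($\Lambda_\ell$, hyperbolic times, H\"older bootstrap) appears there.

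Read charitably as a proof of Theorem~\ref{unstable manifolds}, your skeleton does match the paper's: apply the plaque theorem to $g^{-1}$ with $F=E^{cu}$, set $W_r^{cu}(g,x)$ to be a small sub-disk of the plaque, and propagate the defining inequality of $\Lambda_\ell(g,\sigma)$ along backward $\ell$-blocks by strong induction to get the $\sigma^{1/2}$ contraction. But your two appeals to H\"older continuity of $Dg$ (``here $r>1$ is genuinely needed'') are both misplaced. For the everywhere-tangency claim, the paper never uses H\"older regularity: it argues by contradiction via domination alone, observing that a vector $v\in T_y W_r^{cu}(g,x)\setminus E^{cu}_y(g)$ would satisfy $\angle(Dg^{-n}v, E^s_{g^{-n}(y)})\to 0$ by the dominated splitting, while $Dg^{-n}v$ remains tangent to disks $D_g(g^{-n}(x),\cdot)$ that are uniformly transversal to $E^s$. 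For item (iii), the paper needs only \emph{uniform} continuity of $\log\|Dg^{-\ell}\vert E^{cu}_\cdot\|$ on the compact manifold $M$ to choose $c_1$ with $\|Dg^{-\ell}\vert E^{cu}_y\| < \sigma^{-1/2}\|Dg^{-\ell}\vert E^{cu}_x\|$ whenever $\di(x,y)<c_1$; there is no need to sum a convergent series of H\"older errors, and in fact such a sum is not what yields the clean multiplicative bound in the inductive step. In short, Theorem~\ref{unstable manifolds} is a purely $C^1$ statement and the regularity $r>1$ plays no role in its proof; your reliance on it to close the tangency and the Pesin-block inclusion is an artifact, not a necessity.
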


We make some remarks and set some notation that will be used in the proof of Theorem~\ref{unstable manifolds}. Fix a mostly expanding diffeomorphism $f: M \to M$. The inverse of $f$ has a dominated splitting $E^{cu} \prec E^s$. Hence, according to Theorem~\ref{plaques} there are a $C^1$ neighborhood $\cU$ of $f$, a number $\rho>0$ and a family $\{\Phi_{g,x}\}_{g \in \cU, x \in M}$ such that $D \Phi_{g,x}(0) = E_x^{cu}(g)$ and 
\[g^{-1}(\Phi_{g,x}(B_\rho(0))) \subset \Phi_{g,g^{-1}(x)}(\R^{cu}),\]
where $cu$ stands for the dimension of $E^{cu}$.

 Let 
\[V_g(x) = \Phi_{g,x}(\R^{cu})\]
and for $c>0$ let 
\[ D_g(x,c) = \{y \in V_g(x): \di_{V_g(x)}(x,y) < c \}\]
and write $\partial D_g(x,c)$ for $\overline{D_g(x,c)}\setminus D_g(x,c)$. 
For sufficiently small $c$ (but uniform in $x$ and $g$), the set $D_g(x,c)$ is a disk of radius $c$ in the sense that
\[\di_{V_g(x)}(x,y) = c\]
for every $y \in \partial D_g(x,c)$. 
Take $c_0>0$ small enough so that $D_g(x,c_0)$ is a disk of radius $c_0$ and also so that 
\[D_g(x,c_0) \subset \Phi_{g,x}(B_\rho(0))\]
for every $(g,x) \in \cU \times M$. Then $g^{-1}(D_g(x,c_0)) \subset V_g(g^{-1}(x))$ for every pair $(g,x) \in \cU \times M$. The same is true if $c_0$ is replaced with any number smaller than $c_0$. In particular, given any $c_1 \leq c_0$, there exists $c_2 < c_1$ such that 
\[ g^{-1}(D_g(x,c_2)) \subset D_g(g^{-1}(x), c_1)\]
for every pair $(g,x) \in \cU \times M$. This can be carried out for iterates of $g$ as well: Given any $\ell \in \N$ and any $c_1<c_0$ there exists $c_2<c_1$ such that 
\[g^{-k}(D_g(x,c_2)) \subset D_g(g^{-k}(x),c_1)\]
for every pair $(g,x) \in \cU \times M$ and every $1 \leq k \leq \ell$.

\begin{proof}[Proof of theorem \ref{unstable manifolds}]

Fix a mostly expanding diffeomorphism $f: M \to M$, $0<\sigma<1$ as in (\ref{sigma}), and some number $\ell \in \N$. Let $\cU$, $\{V_g(x)\}_{g \in \cU, x \in M}$ and $c_0$ be as in the above discussion. We shall also assume that $c_0$ is sufficiently small so that each $D_g(x,c_0)$ is uniformly transversal to $E^s(g)$ for every $(g,x) \in \cU \times M$. By uniform continuity of $\log \| Df^{-\ell} \vert E_x^{cu}\|$, there exists $0<c_1< c_0$  such that 
\[\|Df^{-\ell} \vert E_y^{cu}(f)\| < \sigma^{-1/2} \|Df^{-\ell} \vert  E_x^{cu}(f)\| \]
whenever $\di(x,y) < c_1$. Upon possibly reducing $\cU$, we may assume that 
\[\|Dg^{-\ell} \vert E_y^{cu}(g)\| < \sigma^{-1/2} \|Dg^{-\ell} \vert  E_x^{cu}(g)\| \]
for every $g \in \cU$ and every pair $x,y \in M$ such that $\di(x,y) < c_1$.

Let $r=r(\ell) < c_1$ be such that 
\[g^{-i}(D_g (x,r)) \subset D_{g}(g^{-i}(x), c_1)\]
for every $g \in \cU$,  $x \in M$,  and $1 \leq i \leq \ell$.

Suppose in what follows that $g \in \cU$ and that $x \in \Lambda_\ell(g)$ for some $\ell$. We claim that for every $y \in D_{g}(x,r)$ and every $n \geq 1$ we have
\begin{equation}\label{ind1}
\prod_{k=0}^{n-1} \| Dg^{-\ell} \vert E_{g^{-\ell k}(y)}^{cu} (g) \| \leq \sigma^{\ell n/2},
\end{equation}

Let us prove this by strong induction. For $n=1$, (\ref{ind1}) follows directly from the definition of $\Lambda_\ell(g,\sigma)$. Suppose that (\ref{ind1}) holds for every $1 \leq n \leq N$. We shall prove that it also holds for $n=N+1$. 

First notice that, since  (\ref{ind1}) is assumed to hold for every $y \in D_g(x,r)$, it implies that 
\begin{equation}\label{ind2}
\di_{V_g (g^{-\ell n}(x))} (g^{-\ell n}(x), g^{-\ell n}(y)) \leq \sigma^{\ell n/2} \di_{V_g(x)} (x,y)
\end{equation}
for every $y \in D_g(x,r)$. Consequently, 
\begin{equation}\label{ind3}
g^{-\ell n} (D_{g}(x,r)) \subset D_{g}(g^{-\ell n}(x), r\, \sigma^{\ell n/2}) \subset D_{g}(g^{-\ell n}(x), c_1).
\end{equation}

Now,(\ref{ind3}) said in particular that for every $1\leq n \leq N$ and for every $y \in D_g(x,r)$ we have 
\[\di (g^{-\ell n}(x), g^{-\ell n}(y)) 
\leq r\, \sigma^{\ell n/2} < r < c_1.\]
Hence
\[\|Dg^{-\ell } \vert E_{f^{-\ell n}(y)}^{cu} \| \leq \sigma^{-1/2} \|Df^{-\ell } \vert E_{f^{-\ell n}(y)}^{cu} \|\]
for every $1 \leq n \leq N$.
It follows that
\[\prod_{k=0}^{N} \| Dg^{-\ell} \vert E_{g^{-\ell k}(y)}^{cu} \|
\leq \sigma^{-(N+1)/2} \prod_{k=0}^{N} \| Dg^{-\ell} \vert E_{g^{-\ell k}(x)}^{cu} \| 
\leq \sigma^{-(N+1)/2} \sigma^{\ell (N+1)} \leq \sigma^{\ell (N+1)/2},\]
which is just our inductive hypothesis for $n=N+1$. 
The induction is thus complete and we conclude that (\ref{ind1}) (and hence (\ref{ind2}) and (\ref{ind3})) holds for every $n \geq 1$.

Let $W_{r}^{cu}(g,x) = D_g(x, r)$. Then (ii) is true by construction. Item (iii) is given by our inductive step and (iv) is a consequence thereof. To see why (i) holds, we argue by contradiction. Suppose that for some $g \in \cU$ and $x \in \Lambda_\ell(g,\sigma)$ we have that $W_{r}^{cu}(x)$ is not tangent to $E^{cu}(g)$. Then there is some $y \in W_{r_\ell}^{cu}(x)$ and some $v \in T_y W_{r}^{cu}(x)$ such that $v \notin E_y^{cu}(g)$. By domination, the angle between $Dg^{-n}v$ and $E_{g^{-n}(y)}^s(g)$ then tends to zero as $n \to \infty$. But $Dg^{-n}v \in T D_g(g^{-n}(x), \delta_1)$ for every $n\geq 0$ and must therefore have angle to $E^s(g)$ which is bounded away from zero --- a contradiction. We conclude that indeed $W_{r}^{cu}(x)$ must be tangent to $E^{cu}(g)$.

It remains to prove item (v). To this end, we first show that for $g \in \cU$, the family $\{W_{r}^{cu}(x): x \in \Lambda_\ell (g,\sigma) \}$ is \emph{self-coherent}: given any $x,y \in \Lambda_\ell(g,\sigma)$, the intersection $W_{r}^{cu}(x) \cap W_{r}^{cu}(y)$ is an open subset of both $W_{r}^{cu}(x)$ and $W_{r}^{cu}(y)$. The argument is classic. Suppose it is not true. Then there is a point $z$ in $\partial (W_{r}^{cu}(x) \cap W_{r}^{cu}(y))$ which is in the interior of both $W_{r}^{cu}(x)$ and $W_{r}^{cu}(y)$. Let $w_1$ be a point in $W_{r}^{cu}(x)\setminus W_{r}^{cu}(y)$ close to $z$ and $w_2$ a point in $W_{r}^{cu}(y) \setminus W_{r}^{cu}(x)$ close to $z$ such that $w_1$ and $w_2$ lie on the same local stable manifold. By the construction of $W_{r}^{cu}(x)$ and $W_{r}^{cu}(y)$, 
\[\di(f^{-n}(w_1),f^{-n}(w_2)) \leq \di(f^{-n}(w_1),f^{-n}(z))+\di(f^{-n}(z),f^{-n}(w_2))
\leq 2 c_1 \]
for every $n \geq 0$. But $w_1$ and $w_2$ are on the same unstable manifold, so $\di(f^{-n}(w_1),f^{-n}((w_2))$ must grow larger than $2 c_1$.

Let $\delta=\delta(\ell)>0$ be small enough so that if $g \in \cU$,  $x \in M$, and $y\in B_{\delta}(x) \cap \Lambda_\ell(g,\sigma)$, then $\partial W_{r}^{cu}(y) \cap B_{2 \delta}(x) = \emptyset$. Now given any $y,z \in B_{\delta}(x) \cap \Lambda_\ell(g,\sigma)$, by the self-coherent property we have that 
\[\partial( W_{r}^{cu}(x) \cap W_{r}^{cu}(y)) \subset \partial W_{r}^{cu}(x) \cup \partial W_{r}^{cu}(y).\]
Hence 
\[W_{r}^{cu}(y) \cap B_{2 \delta}(x) = W_{r}^{cu}(z) \cap B_{2 \delta}(x)\]
as required.

\end{proof}

\subsection{Lamination bundles and disintegration}

The notion of Gibbs $cu$-states involves disintegration of a measure along Pesins' unstable manifolds. Broadly speaking, two ways to disintegrate a measure along unstable manifolds appear in the literature. One of them, used in \cite{ABV2000} and the works influenced by it, uses a so-called foliated box. In such a box, unstable manifolds are graphs of functions from one Euclidean ball to another Euclidean ball. This approach is often practical under the presence of dominated splittings. The other approach, often used in the more general setting of non-uniform hyperbolicity,  considers the union of unstable manifolds of points in the intersection of a Pesin  block with a small ball. Here we use a variation of this latter approach.

Let us set up some notation for this section. Let $f:M\to M$ be a $C^r$-partially hyperbolic diffeomorphism, $r>1$, and let $\mu$ be an ergodic Gibbs $u$-state with positive center Lyapunov exponents for $f$. Let  $\hat{\lambda}^c(f,\mu):=\int\lambda^c(f,x)d\mu>0$, and fix any $0<\sigma<1$  such that $\log \sigma^{-1}<\hat{\lambda}^c(f,\mu)$. It is well know from Pesin Theory, that $\mu$-a.e. $x\in M$ there exists $W^{cu}(x)$ center-unstable manifolds. We denote by 
$$\cW(f)=\{W\::\: W \mbox{ is a center-unstable manifold }\}.$$

For  every $\ell\geq 1$,  we fix  $r=r(\ell)>0$ and $\delta=\delta(\ell)>0$ as in Theorem~\ref{unstable manifolds}. This implies that there exist a center-unstable lamination
$$\cW_\ell(f,\sigma) = \{W_{r}^{cu}(x): x \in \Lambda_\ell(f,\sigma)\}.$$

By uniqueness of Pesin's unstable manifolds, we have that $W_r^{cu}(x) \subset W^{cu}(x)$ $\mu$-almost everywhere. In particular, $W_r^{cu}(x)$ $\mu$-almost everywhere of class $C^r$ rather than just $C^1$, as stated in Theorem~\ref{unstable manifolds}.

Given a mostly expanding diffeomorphism $f: M \to M$, we fix, for every $\ell \in \N$ values $r = r(\ell)$ and $\delta = \delta(\ell)$ so that the conclusions of Theorem \ref{unstable manifolds} hold in a neighborhood of $f$. Then, for every  $x\in M$, we write

\begin{equation} \label{partition}
\cQ(\ell,\sigma,x, f) = \{W_{r}^{cu}(y)\cap B_{2\delta} (x): y \in \Lambda_\ell(f,\sigma) \cap \overline{B_\delta (x)} \}
\end{equation}
and 
\begin{equation} \label{lamination bundle}
Q(\ell, \sigma, x,f) = \bigcup_{D \in \cQ(\ell,\sigma, x, f)} D.
\end{equation}

We refer to the set $Q( \ell, \sigma, x,f)$ as a \emph{lamination bundle}. Thus whenever we talk about ``the lamination bundle $Q( \ell, \sigma, x,f)$", the corresponding $r>0$ and $\delta>0$ are implicitly defined.  In particular (see  part v in Theorem \ref{unstable manifolds}) $\cQ(\ell,x)$ is a partition of $Q(\ell,x)$ for every $x\in M$. This is indeed the reason why, in the definition of lamination bundle, we take intersections of unstable manifolds with $B_{2 \delta}(x)$. Note also that we only consider unstable manifolds of points in the smaller ball $B_\delta(x)$. The reason for this is that by doing so we get a lower bound for the volume of the leaves in $\cQ(\ell, \sigma, x,f)$.  When it is not necessary to emphasize the dependency of one particular variable, $f$ or $\sigma$ for instance, for  simplicity of notation we will omit it,  writing for example $\cQ(\ell, x)=\cQ(\ell, \sigma,x, f) $ or $Q(\ell, x)=Q(\ell, \sigma, x, f)$ respectively.

Let $\mu$ be any Borel measure and  $Q = Q(\ell, x)$ a lamination bundle. Clearly, the partition  $\cQ = \cQ(\ell,  x)$ is  measurable in the sense of Rokhlin. We may therefore decompose $\mu_Q$ with respect to $\cQ$: there exists a measurable family of probability measures $\{\mu_D: D \in \cQ\}$ (usually called {\em conditional  measures}) and a measure $\hat{\mu}$ on $\cQ$ (usually called {\em factor measure}) with $|\hat{\mu}| = \mu(Q) $ such that 
\begin{equation}\label{disintegration}
\int_Q \varphi \ d\mu= \int \varphi \ d\mu_Q= \int_{\cQ} \left( \int_D \varphi(x) \ d\mu_D(x) \right) \ d\hat{\mu}(D)
\end{equation}
for every continuous $\varphi:M \to \mathbb{R}$.

Suppose that a  Gibbs $u$-state  $\mu$ with positive central Lyapunov exponent  is also a Gibbs $cu$-state for  $f$.   Fix $\epsilon>0$. By Lemma~\ref{large hyperbolic set of iterate}, there exists $\ell_0\geq 0$ such that $\mu(\Lambda_\ell(f))>1-\epsilon$ for every $\ell\geq \ell_0$. Now fix a such $\ell\geq \ell_0$ and consider  any $x\in M$ such that the lamination bundle $Q=Q(\ell,x)$ has  positive $\mu$-measure. Since $\mu$ is a $cu$-Gibbs state, and since Pesin's unstalbe manifold of $\mu$-almost every point $x$ in $Q$ coincides with $W_r^{cu}(x)$, the conditional measures $\mu_D$ are $\hat{\mu}_D$-almost everywhere absolutely continuous to $m_D$, i.e. there is a family of densities $\rho_D$ such that 
\begin{equation}\label{disintegration2}
\int_Q \varphi \ d\mu= \int \varphi \ d\mu_Q= \int_{\cQ} \left( \int_D \varphi(x)\rho_D(x) \ d m_D(x) \right),
\end{equation}
for every continuous function $\varphi:M \to \R$.   By the uniqueness of the disintegration,  comparing \eqref{disintegration} and \eqref{disintegration2} we conclude that $\hat{\mu}$-almost every $D\in\cQ$, $\mu_D=\rho_D \cdot m_D$, so that  $\mu_D$ is absolutely continuous with respect to $m_D$  for $\hat{\mu}$-almost every $D \in \cQ$. 

\begin{remark}\label{rmk:densidad} 
A priori, $\mu_D$ is only assumed to be absolutely continuous  with respect to $m_D$, so in principle the densities $\rho_D$ need only be  measurable and may be zero on sets of positive $m_D$-measure. Yet  it is known \emph{a fortiori} (see e.g. \cite[Theorem 13.1.2]{barreira_pesin_2007}) that in every lamination bundle above, and $\hat{\mu}$-almost every $D \in \cQ$, the density $\rho_D = \frac{d\nu_D}{dm_D}$ satisfies (compare with \eqref{eq:udensity})
\begin{equation}\label{density}
\frac{\rho_D(x)}{\rho_D(y)} = \prod_{n=0}^{\infty} \frac{\det( Df^{-1} \vert E_{f^{-n}(x)}^{cu})}{\det( Df^{-1} \vert E_{f^{-n}(y)}^{cu})}.
\end{equation}
\end{remark}

The limit (\ref{density}) is bounded above and away from zero by constants that depend only on $\ell$ in a neighborhood of $f$. In particular, given any $\ell\geq 1$, there are a neighborhood $\mathcal{U}$ of $f$ and $L>0$, such that for every $g \in \cU$, every $\mu \in \Gcu(g)$ and every $x \in M$, we have
\begin{equation}\label{eq:densidad}
\int \varphi \ d\mu \leq L \int_{\cQ(\ell, x)} \left( \int \varphi \ dm_D \right) \ d\hat{\mu} 
\leq  \sup_{D \in \cQ(\ell, x)} L |\mu| \int \varphi \ dm_D.
\end{equation}

Reciprocally, if we consider any $x\in M$, $\ell>0$, such that the lamination bundle $Q=Q(\ell,x)$ has  $\mu(Q)>0$, assuming that $\hat{\mu}$-almost every $D\in\cQ$, $\mu_D\prec m_D$,  we conclude that we can disintegrate $\mu_Q$ along the Pesin unstable manifolds  (which are precisely the discs belong to $\cQ$) and then $\mu$ has conditional measures absolutely continuous with respect to the Pesin unstable manifolds. But again, from Lemma~\ref{large hyperbolic set of iterate} we conclude that the union of lamination bundles $Q=Q(\ell,x)$ as above has measure bigger that $1-\epsilon$, where  $\epsilon>0$ is arbitrary. So $\mu$ must be a Gibbs $cu$-state.

The discussion above implies in particular that if $\mu$ is not a  a Gibbs $cu$-state for  $f$, then we can find a subset $X\subseteq M$ of positive $\mu$-measure (which is singular with respect to the Lebesgue measure along the center unstable manifolds) such that for every $x\in M$ and every lamination bundle $Q=Q(\ell, x)$, with associated partition $\cQ=\cQ(\ell, x)$,  we have we have $m_D(X) = 0$ for $\hat{\mu}$-almost every $D \in \cQ$.

\section{Proof of Theorem~\ref{cu is closed}} \label{proof of prop}

\subsection{A characterization of Gibbs $cu$-states}
As we outlined in the introduction, proving statistical stability of mostly expanding diffeomorphisms involves proving that a limit of Gibbs $cu$-states is a Gibbs $cu$-state. %Although this may be intuitively clear to many readers in light of Lemma~\ref{large pesin blocks}, 
A direct proof using lamination bundles (or foliated boxes) would be clumsy and difficult to make rigorous. 
Our first goal is to answer the follwing question: Which among the Gibbs $u$-states with positive Lyapunov exponents along the central direction are Gibbs $cu$-states?  We intend to answer that question  by introducing a "disintegration-free" characterization and so give a cleaner proof of Theorem~\ref{cu is closed}.

%Let us fix the notation along this section.   Let $f:M\to M$ be a $C^r$-partially hyperbolic diffeomorphism, $r>1$, and let $\mu$ be a Gibbs $u$-state such that $\mu$-almost every point $x\in M$, $\lambda^c(f,x)>0$. Let  $\hat{\lambda}^c(f,\mu):=\int\lambda^c(f,x)d\mu>0$, and fix any $0<\sigma<1$  such that $\log \sigma^{-1}<\hat{\lambda}^c(f,\mu)$.   
%For every $\ell\geq 0$, also we fix $r=r(\ell)>0$, $r/2>\delta=\delta(\ell)>0$ given by Theorem~\ref{unstable manifolds}.   This implies that there exist a center unstable lamination
%$$\cW_\ell(f) = \{W_{r}^{cu}(x): x \in \Lambda_\ell(f,\sigma)\}$$
%which allow us to build laminations bundles on every $x\in M$. For an $x\in M$ fixed, we write $Q=Q(\ell,x)$ the lamination bundle defined by \eqref{lamination bundle} with associated partition $\cQ=\cQ(\ell,x)$ defined by  \eqref{partition}. For  our fixed Gibbs $u$-state $\mu$, we denote by $\mu _Q$ the restriction of $\mu$ to $\Q$,  $\{\mu_D: D \in \cQ\}$  denotes the family of conditional probability measures  of $\mu_Q$ with respect to $Q$ and  $\hat{\mu}$ denotes the factor measure defined on $\cQ$. Recall that $\mu_Q$,  $\hat{\mu}$,  and $\{\mu_D: D \in \cQ\}$ satisfies the relation \eqref{disintegration}.

%For $\epsilon>0$, let $\ell_0=\ell_0(\epsilon)>0$ given by Remark~\ref{rmk:singlemu}. In particular, 
%$\mu(\Lambda_\ell (f,\sigma)) > 1-\epsilon$ for every  $\ell \geq \ell_0$, where $\Lambda_\ell (f,\sigma)$ is defined in \eqref{eq:deflambdan}-\eqref{eq:deflambda}. 

\begin{Theorem} \label{characterization}
Let $f:M \to M$ be a $C^r$-partially hyperbolic diffeomorphism, $r>1$, and  let $\mu$ be a Gibbs $u$-state such that $\mu$ has positive central Lyapunov exponents.  Then the following are equivalent:
\begin{enumerate}
\item $\mu$ is a Gibbs $cu$-state.
\item Given any $\epsilon>0$ and  sufficiently large $\ell$,  there exists $K>0$ such that  
\begin{equation} \label{characterizationinequality}
\int \varphi \: d\mu < \epsilon + K \cdot \sup_{W\in\cW_\ell(f)}\int \varphi \: d m_W
\end{equation}
for every continuous function $\varphi:M \to [0,1]$. Moreover, if $f$ is mostly expanding then there is a  $C^r$ neighborhood $\cU$ of $f$, such that the constant $K\geq 0$ can be chosen independent of every $g\in \cU$ and every $\mu\in\Gcu(g)$.
\end{enumerate}
\end{Theorem}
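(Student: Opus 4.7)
Fix $\varepsilon>0$. The strategy is to reduce the integral $\int \varphi\,d\mu$ to a finite sum of integrals over lamination bundles, where the disintegration along Pesin unstable plaques is well-behaved. By Lemma~\ref{large pesin blocks}, choose $\ell_0$ and (in the mostly expanding case) a neighborhood $\cU$ of $f$ such that $\nu(\Lambda_\ell(g,\sigma))>1-\varepsilon/2$ for every $\ell\ge\ell_0$, $g\in\cU$, $\nu\in\Gcu(g)$. Cover $M$ by finitely many balls $B_{\delta(\ell)}(x_1),\dots,B_{\delta(\ell)}(x_N)$; since any $y\in\Lambda_\ell\cap B_\delta(x_i)$ lies in $W_{r}^{cu}(y)\cap B_{2\delta}(x_i)\in\cQ(\ell,x_i)$, the Pesin block is covered by $\bigcup_i Q(\ell,x_i)$. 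On each $Q(\ell,x_i)$ I apply the disintegration with the density bound $\rho_D\le L$ from Remark~\ref{rmk:densidad} (equation~\eqref{eq:densidad}). Finally, for each plaque $D=W\cap B_{2\delta}(x_i)\in\cQ(\ell,x_i)$ with $W\in\cW_\ell(f)$, a volume comparison
\[\int\varphi\,dm_D=\frac{1}{|\operatorname{vol}_D|}\int_D\varphi\,d\operatorname{vol}_W\le \frac{|\operatorname{vol}_W|}{|\operatorname{vol}_D|}\int\varphi\,dm_W\le C\int\varphi\,dm_W\]
yields \eqref{characterizationinequality} with $K=LCN$. Uniformity of $K$ in a neighborhood follows from the uniform bounds of $L$, $C$ (continuous dependence of unstable plaques on $(g,x)$ via Theorem~\ref{unstable manifolds}) and $N$.

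\textbf{Plan for $(2) \Rightarrow (1)$.} I argue by contrapositive. Assume $\mu$ is a Gibbs $u$-state with positive central Lyapunov exponents that is \emph{not} a Gibbs $cu$-state. By the discussion at the end of Section~\ref{uniform estimates}, there exist $\ell_0$ and $x_0\in M$ such that $\mu(Q_0)>0$ for $Q_0=Q(\ell_0,x_0)$, and a Borel $X\subset Q_0$ with $\mu(X)=:c>0$ and $m_D(X)=0$ for $\hat\mu$-a.e.\ $D\in\cQ(\ell_0,x_0)$. Removing the $\mu$-null set $\bigcup\{D:m_D(X)>0\}$ I may assume $m_D(X)=0$ for \emph{every} $D\in\cQ(\ell_0,x_0)$. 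I set $\varepsilon=c/3$ and, for any $\ell\ge\ell_0$ and any prospective $K>0$, aim to produce a continuous $\varphi:M\to[0,1]$ with $\int\varphi\,d\mu>\varepsilon+K\sup_W\int\varphi\,dm_W$. For $W=W_{r}^{cu}(z)\in\cW_\ell(f)$ with $z\in\overline{B_{\delta}(x_0)}$, self-coherence (Theorem~\ref{unstable manifolds}(v)) identifies $W\cap B_{2\delta}(x_0)$ with a plaque of $\cQ(\ell_0,x_0)$, so $m_W(X)=0$ directly. By outer regularity of each $m_W$ and of $\mu$, I would construct a small open neighborhood $U\supset X$ with $\mu(U)>2c/3$ while $\sup_W m_W(U)<c/(3K)$; any continuous $\varphi$ with $\chi_X\le\varphi\le\chi_U$ then violates \eqref{characterizationinequality}.

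\textbf{Main obstacle.} The hard part is producing a \emph{single} open $U\supset X$ which is simultaneously thin for every $m_W$ in the infinite family $\cW_\ell(f)$, including those $W=W_{r}^{cu}(z)$ with base point $z$ outside $\overline{B_{\delta(\ell_0)}(x_0)}$ that nevertheless enter $B_{2\delta}(x_0)$. For these $W$'s the intersection with $B_{2\delta}(x_0)$ need not be a plaque of $\cQ(\ell_0,x_0)$, so the vanishing $m_W(X)=0$ is not automatic. I would overcome this by exploiting the continuous dependence of the plaques $x\mapsto\Phi_{g,x}$ (Theorem~\ref{plaques}), which extends $W_{r}^{cu}(\cdot)$ as a continuous family over all of $M$ (not merely over $\Lambda_\ell$); this compactifies the parameter space of laminations touching $Q_0$. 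A finite-cover argument combined with the outer regularity of finitely many measures $m_W$ then produces the required uniform $U$. Once this is settled, the proof is complete and, as a byproduct, the constant $K$ in \eqref{characterizationinequality} can indeed be chosen independent of $g$ in a $C^r$-neighborhood of $f$ and of $\nu\in\Gcu(g)$, since every ingredient (Pesin-block measure, density bound, volume ratios) is uniform.
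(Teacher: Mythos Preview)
Your plan for $(1)\Rightarrow(2)$ is essentially the paper's own argument: cover $M$ by finitely many balls of radius $\delta(\ell)$, disintegrate $\mu$ in each lamination bundle, use the uniform density bound from Remark~\ref{rmk:densidad}, and compare $m_D$ with $m_W$ via a volume ratio. The constants combine exactly as you say, and uniformity in a $C^r$ neighborhood follows from Theorem~\ref{unstable manifolds} and Lemma~\ref{large pesin blocks}.

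The gap is in $(2)\Rightarrow(1)$. Your reduction to ``find a single open $U\supset X$ with $\sup_W m_W(U)$ small'' is the right target, but the proposed mechanism---compactness of the plaque family plus outer regularity of finitely many $m_W$---does not deliver it. Even with a compact parameter space, the map $W\mapsto m_W(U)$ is only \emph{upper} semicontinuous in $W$ for a fixed open $U$, so smallness at finitely many $W$'s does not propagate to the supremum. There is also a mismatch of scales: you fix $X\subset Q(\ell_0,x_0)$ and then, for $\ell\ge\ell_0$, invoke self-coherence to identify $W\cap B_{2\delta}(x_0)$ with a plaque of $\cQ(\ell_0,x_0)$; but $W\in\cW_\ell(f)$ is a disk of radius $r(\ell)$ through a point of $\Lambda_\ell(f,\sigma)$, which need not lie in $\Lambda_{\ell_0}(f,\sigma)$, so item~(v) of Theorem~\ref{unstable manifolds} does not apply across different $\ell$'s.

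The paper avoids both issues by two devices you are missing. First, it works at a \emph{single} $\ell$ but with two contraction rates: by Theorem~\ref{unstable manifolds}(iii), every $W\in\cW_\ell(f,\sigma)$ is contained in $\Lambda_\ell(f,\sigma^{1/2})$, so any such $W$ meeting $B_\delta(x)$ has $W\cap B_{2\delta}(x)$ contained in a plaque of the enlarged bundle $Q'=Q(\ell,\sigma^{1/2},x)$. Supporting the test function inside $B_\delta(x)$ then reduces $\sup_{W\in\cW_\ell(f)}$ to $\sup_{D\in\cQ'}$ for one fixed partition. Second, the uniform smallness over all $D\in\cQ'$ is obtained not by outer regularity but by the separating-function Lemma~\ref{separating function}: starting from a continuous $\phi$ with $\int\phi\,dm_{Q'}<b^2$, Chebyshev on the factor measure isolates the ``bad'' plaques (those with $\int_D\phi\,dm_D\ge b$) into a closed set $K$ of $\mu$-measure $<b$, and a continuous cutoff $\xi$ vanishing on $K$ yields $\int_D\phi\xi\,dm_D<b$ for \emph{every} $D\in\cQ'$, while $\int\phi\xi\,d\mu$ stays close to $\int\phi\,d\mu$. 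This is the substitute for the uniform-outer-regularity step that your outline leaves unproven.
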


Another characterization was provided in \cite[Theorem A]{V2009} where the author  proved that an ergodic Gibbs $u$-state $\mu$  with positive central Lyapunov exponents is a Gibbs $cu$-state if and only if there is a local center-unstable manifold contained (Lebesgue-mod 0) in the basin of $\mu.$ Unfortunately, we are not able to apply that characterization here, as we have no means of finding such  center-unstable manifold contained in the basin of $\mu$.

Our characterization is rather subtle and logically intricate, but useful.  It can be stated in symbolic form like this: $\mu \in \Gcu(f) \Longleftrightarrow$
\begin{gather*}
\forall  \epsilon>0,\   \exists \ell_0>0, \  \forall \ell \geq \ell_0, \ \exists K>0 ,\  \forall \varphi \in C^0(M, [0,1]),  \ \exists W \in \cW_\ell(f): \\
\int \varphi \ d\mu < \epsilon + K \int \varphi \ dm_W.
\end{gather*}

It is an expression of quantifier rank equal to six and must be dealt with very carefully. We believe that it reflects an inherent intricacy of the notion of Gibbs $cu$-states (or SRB measures more generally) which is not always apreciated. It also explains why a carefully written proof of  convergence of Gibbs $cu$-states is harder than one may think.

\begin{proof}[Proof that (i) implies (ii) in Theorem~\ref{characterization}]

Suppose that $\mu \in \Gcu(f)$ and fix some $\epsilon>0$ and $\ell \geq \ell_0 $. % Choose $\ell_0\geq 1$ as in Remark~\ref{rmk:singlemu} and let $\ell \geq \ell_0 $. Choose $\delta>0$ small enough so that, given any $x \in M$,  $Q(\ell,  \delta; x)$ is a lamination bundle with associated partition $\cQ(\ell, \delta; x)$. 
Choose $x_1, \ldots, x_k$, $k=k(\ell)\geq 1$, such that  $\{B_\delta(x_i): i =1, \ldots, k \}$ is a cover of $M$ . Let $\cQ_i = \cQ(\ell; x_i)$ and $Q_i = Q(\ell; x_i)$ for $1 \leq i \leq k$.

For every $i \in \{1, \ldots, k \}$, we denote $\mu_i$  the restriction of  $\mu $ to $Q_i$. Denotes by $\{\mu^i_D: D \in \cQ_i\}$ the family of conditional measures of $\mu_i$ with respect to the partition $\cQ_i$ and by $\hat{\mu}_i$ the factor measures defined on $\cQ_i$. %Recall that we may decompose  $\mu_i$ with respect to the partition $\cQ_i$, i.e. to find a measurable family of probability measures $\{\mu_D: D \in \cQ_i\}$ and a factor measure $\hat{\mu}_i$ on $\cQ_i$ with $|\hat{\mu}_i| = \mu(Q_i) $ satisfying \eqref{disintegration}.

Let $L=L(\ell)>0$ be as in the Remark~\ref{rmk:densidad} such that for every $i \in \{1, \ldots, k \}$ and $\hat{\mu}_{i}$-almost every $D \in \cQ_i$, the density of $\mu^i_D$ with respect to $m_D$ is bounded above by $L$. Then
\begin{equation*}
\int_D \varphi \ d\mu^i_D \leq L \int_D \varphi \ dm_D
\end{equation*}
for $\hat{\mu}_{i}$-almost every $D \in \cQ_i$.  Let $S=S(\ell)$ be an upper bound for $\{ |\vol_D|^{-1}: D \in \cQ_i, 1\leq i \leq k \}$ and let $T=T(\ell)$ be an upper bounded for $\{ |\vol_W|: W \in \cW_\ell(f) \}$. 

On the one hand we have
\begin{eqnarray}
\int_{Q_i} \varphi \ d\mu & = &\int_{\cQ_i} \left( \int_D \varphi \ d\mu^i_D \right) \ d\hat{\mu}_i (D) \nonumber \\
& \leq& \hat{\mu_i}(\cQ_i)  L \sup_{D \in \cQ_i} \int_D\varphi \ dm_D \nonumber \\
& = &\mu(Q_i) L \sup_{D \in \cQ_i} \int_D \varphi \ d\frac{\vol_D}{|\vol_D|} \nonumber  \\
& \leq& L S \sup_{D \in \cQ_i} \int \varphi \ d\vol_D. \label{eq:desigualdad45}
\end{eqnarray}

On the other hand, since for every $D\in\cQ_i$ there is a $W\in\cW_\ell(f)$ such that $D\subseteq W$, and the rank of $\varphi$ is the unit interval, then 

\begin{eqnarray}
\sup_{D \in \cQ_i} \int_{Q_i} \varphi \ d\vol_D& \leq &\sup_{W \in \cW_\ell(f)}\int_{W} \varphi d\vol_W \nonumber \\
& \leq &\sup_{W \in \cW_\ell(f)}\int_{W} \varphi |\vol W|\frac{\ d\vol_W}{|\vol W|}\nonumber \\
& =& \sup_{W \in \cW_\ell(f)}\int_{W} \varphi |\vol W| \ dm_W \nonumber\\
& \leq& T  \sup_{W \in \cW_\ell(f)} \int \varphi \ dm_W. \label{eq:desigualdad46}
\end{eqnarray}

Notice that $\Lambda_\ell(f) \subset Q_1 \cup \ldots \cup Q_k$ and that $\mu(\Lambda_\ell(f))>1-\epsilon$. So from \eqref{eq:desigualdad45} and \eqref{eq:desigualdad46} we have
we obtain,
\begin{eqnarray*}
\int \varphi \ d\mu & <& \epsilon + \sum_{i=1}^k \int_{Q_i} \varphi \ d\mu \\
&\leq & \epsilon+ k L S   \sup_{D \in \cQ_1\cup \ldots \cup \cQ_k} \int \varphi \ d\vol_D \\
& \leq &\epsilon + k L S T  \sup_{W \in \cW_\ell(f)} \int \varphi \ dm_W
\end{eqnarray*}

The proof follows by taking $K=k L ST$.

For the second part of the statement, we note that $K=K(\ell)>0$ depends on  the cardinality $k=k(\ell)\geq 1$ of the finite covering by balls of radius $\delta(\ell,f)>0$,  and the bounds $S=S(\ell)>0$ and $T=T(\ell)$  and the bound $L=L(\ell)>0$ provides in Remark~\ref{rmk:densidad}. All this constants can be chosen uniformly in a neighborhood $\cU$ of $f$, independent of the measure $\mu\in\Gcu(g)$, $g\in\cU$,  according to Lemma~\ref{large pesin blocks} and Theorem~\ref{unstable manifolds}.
\end{proof}

The converse statement in Theorem~\ref{characterization} is harder to prove. We need an auxiliary result. Recall that if $Q$ is a lamination bundle with associated partition $\cQ$, such that $\mu(Q)>0$, we  define the Borel measure $m_Q$ on $Q$ as 
$$\int\varphi\, dm_Q=\int_\cQ\int_D\varphi\,d m_D d\hat{\mu}(D),$$
for every $\varphi\in C^0(M,\mathbb{R})$.

\begin{lemma} \label{separating function}
Let  $Q=Q(\ell, x)$ be a lamination bundle with associated partition $\cQ = \cQ(\ell, x)$,  $\mu$ be a Borel measure such that $\mu(Q)>0$ and  $\phi:M \to [0,1]$  a continuous function. Then, given any $b>0$ such that $b^2 > \int \phi \ d m_Q$, there exists a continuous function $\xi:M \to [0,1]$ with $1-\xi$ supported in $B_{2\delta}(x)$ such that
\begin{itemize}
\item[{i.}] $\mu(\{x: \xi(x)<1 \}) < b$; and \\
\item[{ii.}] $\displaystyle \int_D  \phi\cdot \xi \ dm_D < b$, for every $D \in \cQ$.
\end{itemize}
\end{lemma}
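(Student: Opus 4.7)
The plan is to apply Markov's inequality to identify the ``bad'' discs (those where $\int_D \phi\,dm_D$ is large) and then construct $\xi$ as a Urysohn-type cutoff vanishing on the union of the bad discs. One technicality needs care: because each disc $D = W_r^{cu}(y) \cap B_{2\delta}(x)$ uses the \emph{open} ball, the union of bad discs is not closed in $M$. This is handled by shrinking the ball by a small amount $\eta$.

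First, by the definition of $m_Q$ the hypothesis reads
\[ \int_{\cQ} I(D)\, d\hat{\mu}(D) < b^2, \qquad I(D) := \int_D \phi\, dm_D. \]
Applying Markov's inequality, the set of bad discs $B := \{D \in \cQ : I(D) \geq b\}$ satisfies $\hat{\mu}(B) < b$. Let $A := \bigcup_{D\in B} D$. Since the conditional measures $\mu_D$ of $\mu$ along $\cQ$ are probability measures supported on $D$,
\[ \mu(A) = \int_{\cQ} \mu_D(A)\, d\hat{\mu}(D) = \hat{\mu}(B) < b. \]
If $\xi$ were allowed to be the characteristic function of $M \setminus A$, both conditions (i) and (ii) would follow immediately; the remainder of the proof produces a continuous approximation.

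Next, by the continuous dependence of the discs on the base point (Theorem~\ref{unstable manifolds}(ii)), $I : \cQ \to [0,1]$ is continuous, so $B = I^{-1}([b, \infty))$ is closed in $\cQ$. Since $\cQ$ is a continuous image of the compact set $\Lambda_\ell(f) \cap \overline{B_\delta(x)}$, both $\cQ$ and $B$ are compact. For $\eta > 0$, set
\[ A_\eta := A \cap \overline{B_{2\delta - \eta}(x)}. \]
This set is compact (it is the image of a closed subset of $B \times \overline{D^{cu}(0, r)}$ under the continuous parametrization of the discs). Moreover, for every $D \in B$,
\[ m_D(D \setminus A_\eta) = m_D\bigl(D \cap (B_{2\delta}(x) \setminus \overline{B_{2\delta - \eta}(x)})\bigr) \]
is upper semicontinuous in $D$ (by the Portmanteau theorem, using the continuous variation of $m_D$) and decreases to zero as $\eta \to 0^+$, because the open annuli $B_{2\delta}(x) \setminus \overline{B_{2\delta-\eta}(x)}$ shrink monotonically to the empty set. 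A Dini-type argument on the compact set $B$ then permits me to fix $\eta > 0$ so small that $m_D(D \setminus A_\eta) < b$ uniformly in $D \in B$.

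Finally, since $\mu(A_\eta) \le \mu(A) < b$ and $A_\eta \subset \overline{B_{2\delta - \eta}(x)} \subsetneq B_{2\delta}(x)$, outer regularity of $\mu$ yields an open $U$ with $A_\eta \subset U$, $\overline U \subset B_{2\delta}(x)$, and $\mu(U) < b$. By Urysohn's lemma, take a continuous $\xi : M \to [0,1]$ with $\xi \equiv 0$ on $A_\eta$ and $\xi \equiv 1$ on $M \setminus U$. Then $1 - \xi$ is supported in $\overline U \subset B_{2\delta}(x)$, and $\{\xi < 1\} \subset U$ gives (i). For (ii): if $D \notin B$ then $\int_D \phi \xi\, dm_D \le I(D) < b$; if $D \in B$ then $\xi$ vanishes on $D \cap A_\eta$, so
\[ \int_D \phi \xi\, dm_D \le \int_{D \setminus A_\eta} \phi\, dm_D \le m_D(D \setminus A_\eta) < b \]
by the choice of $\eta$. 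The main obstacle is the non-closedness of $A$ caused by the open-ball intersection; the resolution is the slight shrinking to $A_\eta$ combined with the uniform estimate $m_D(D \setminus A_\eta) < b$ on the compact set $B$.
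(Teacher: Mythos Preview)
Your proof is correct and follows essentially the same route as the paper: apply Chebyshev/Markov to isolate the bad discs, truncate their union by intersecting with a slightly smaller closed ball to obtain a compact set, then use regularity and Urysohn. The paper's $K_\tau = K \cap \overline{B_\tau(x)}$ is exactly your $A_\eta$ with $\tau = 2\delta - \eta$. One small slip: the quantity $m_D\bigl(D \cap (B_{2\delta}(x)\setminus \overline{B_{2\delta-\eta}(x)})\bigr)$ is the $m_D$-measure of an \emph{open} set, hence \emph{lower} semicontinuous in $D$ by Portmanteau, not upper. The fix is immediate: bound it above by $m_D\bigl(M\setminus B_{2\delta-\eta}(x)\bigr)$, which is upper semicontinuous (closed set) and still decreases to $0$ as $\eta\to 0$; your Dini argument then goes through verbatim. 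In fact, at this step you are more careful than the paper, which simply asserts the existence of such a $\tau$ uniformly over $D\in\cQ$ without spelling out the compactness/Dini justification.
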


\begin{proof} 

Let $\mathcal{K}$ be the set of those $D \in \cQ$ for which $\int_D \phi \ dm_D \geq b$  and let $K= \bigcup_{D \in \mathcal{K}} D$. 
By Chebyshev's inequality we have
\begin{equation*}
\mu(K) = \hat{\mu}(\mathcal{K}) \leq \frac{1}{{b}} \int_{\mathcal{K}} \left( \int_D \phi \ dm_D \right) \ d\hat{\mu}(D)
\leq  \frac{1}{{b}}\int \phi \ dm_Q < {b}.
\end{equation*}

Fix $0< \tau < 2 \delta$ large enough so that $m_D(D \setminus \overline{B_\tau(x)}) < b$ for every $D \in \cQ$ and let $K_\tau = K \cap \overline{B_\tau (x)}$.  Thus in particular
\[m_D(D \cap K_\tau^c) < b \]
for every $D \in \cK$.

It follows from (ii) in Theorem~\ref{unstable manifolds} that the map $x \mapsto \int \phi \ dm_{D(x)}$ is continuous on $Q$, where $D=D(x)$ is the element of $\cQ$ that contains $x$. Thus $K_{\tau}$ is closed. Note that $\mu(K_\tau) \leq \mu(K) < b$.
Let $U$ be an open neighborhood of $K_{\tau}$ with $\overline{U} \subset B_{2\delta}(x)$  such that $\mu(U) < b$. Such $U$ can always be found due to the regularity of Borel measures.
Let $\xi: M \to [0,1]$ be a continuous function such that $\xi=0$ on $K_\tau$ and $\xi = 1$ on the compliment of $U$. Then 
$$\mu(\{x: \xi(x)<1 \})\leq \mu(U) <b.$$

We claim that $\int \phi\cdot\xi\ dm_D < b$ for every $D \in \cQ$. Indeed, if $D \notin \mathcal{K}$, then 
$$\int \phi\cdot\xi\ dm_D \leq \int \phi \ dm_D<{b}$$ 
since $0\leq \xi \leq 1$ and by the  definition of $\mathcal{K}$. 
On the other hand, if  $D \in \mathcal{K}$, then  
\begin{align*}
\int \phi\cdot\xi\ dm_D & \leq \int \xi \ dm_D \\ & = \int_{D\cap K_\tau} \xi \ dm_D + \int_{D\cap K_\tau^c} \xi \ dm_D \\ & \leq 0 + m_D(D \cap K_\tau^c) < b,
\end{align*}
since $0 \leq \phi \leq 1$ and $\xi$ vanishes at $K_\tau$.
\end{proof}

\begin{proof}[Proof that (ii) implies (i) in Theorem~\ref{characterization}]
Suppose that $\mu$ is not a Gibbs $cu$-state. 
Then there is some measurable set $X \subset M$ with $\mu(X)>0$ for which the following happens: For every $x\in X$ and every  lamination bundle $Q=Q(\ell, \sigma, x)$, 
with associated partition $\cQ=\cQ(\ell,\sigma,x)$,  we have we have $m_D(X) = 0$ for $\hat{\mu}$-almost every $D \in \cQ$. %, where $\mu _Q$ denotes  the restriction of $\mu$ to $\Q$,  $\{\mu_D: D \in \cQ\}$  denotes the family of conditional probability measures  of $\mu_Q$ with respect to $Q$ and  $\hat{\mu}$ denotes the factor measure defined on $\cQ$. Recall that $\mu_Q$,  $\hat{\mu}$,  and $\{\mu_D: D \in \cQ\}$ satisfies the relation \eqref{disintegration}.

Let $\epsilon = \mu(X)/2>0$. We must prove that, given any integer $\ell_0\geq 1$, there exists $\ell \geq \ell_0$ with the property that for every $K>0$ it is possible to find a continuous function $\varphi:M \to [0,1]$ such that
\begin{equation} \label{must prove}
\int \varphi \ d\mu \geq \epsilon + K \int \varphi \ dm_W.
\end{equation}
for every $W \in \cW_\ell(f)$.
To this end, fix $\ell_0>0$ arbitrarily. Thereafter choose $\ell \geq \ell_0$ large enough so that $\mu(\Lambda_\ell(f, \sigma))>1-\epsilon$. 
Choose $x\in M$ such that $\mu(B_{\delta} (x)\cap X))>0$ and  fix  $Q'= Q(\ell,  \sigma^{1/2} ; x)$ and $\cQ' = \cQ (\ell,  \sigma^{1/2} ; x)$. Moreover,  let $\hat{\mu}_{\cQ'}$ be the factor measure of $\mu_{Q'}$ with respect to the partition $\cQ '$ and denote by $m_{Q'}$ the measure $\int_{\cQ'} m_D \ d\hat{\mu}_{\cQ'}(D)$. 

Fix $K>0$ arbitrarily and choose $0<\alpha< \epsilon/3$ such that

$$0<\alpha<\left(\frac{\mu(X)}{2(K+2)}\right)^2.$$

 Note that $\mu_{Q'} \vert (X\cap B_{\delta} (x))$ and $m_{Q'}$ are mutually singular measures. Therefore we can find a continuous function $\phi:M \to [0,1]$ such that  $\supp\phi\subseteq B_{\delta}(x)$,  satisfying
\begin{equation}\label{eq:disin1}
m_{Q'}(X)=0\leq\int\phi\ d m_{Q'}<\alpha<\mu(X)-\alpha<\int\phi\ d\mu.
\end{equation}
We can apply Lemma~\ref{separating function} to  $\phi$ taking $b=\frac{\mu(X)}{2(K+2)}>0$, and we find a continuous function $\xi:M \to [0,1]$ with $1-\xi$ supported in $B_{2\delta}(x)$ satisfying the following: From Lemma~\ref{separating function} item i, and since $0\leq\phi\leq 1$ and $0\leq \xi\leq 1$ we have, 
$$ \int\phi(1-\xi)\ d\mu \leq \int (1-\xi)\ d\mu \leq \mu(\{x\::\: 1-\xi(x)>0\})<b.$$

Combining this last relation with \eqref{eq:disin1} we obtain  
$$
\mu(X)-\alpha < \int\phi \,  d\mu\leq\int \phi\, \xi\ d\mu+\int\phi (1-\xi)\ d\mu< \int \phi\, \xi\ d\mu+b
$$
and since $\alpha<b$,then
\begin{equation}\label{eq:upperbound}
\left(\frac{K+1}{K+2}\right)\mu(X)=\mu(X)-2b< \mu(X)-\alpha-b < \int\phi\, \xi \  d\mu.
\end{equation}
On the other hand, Lemma~\ref{separating function}, item ii., implies immediately that  for every $D\in \cQ'$,
\begin{equation}\label{eq:lowerbound}
\int\phi\,\xi\ dm_D< \frac{\mu(X)}{2(K+2)}
\end{equation}
Now, calling $\varphi= \phi\,\xi$ and combining \eqref{eq:upperbound} and \eqref{eq:lowerbound} we have
\begin{equation}\label{eq:finalbound}
K\int\varphi\ dm_D+\epsilon< K\left(\frac{\mu(X)}{2(K+2)}\right)+\frac{\mu(X)}2=\left(\frac{K+1}{K+2}\right)\mu(X)< \int\varphi\  d\mu.
\end{equation}

Let $W \in \cW_\ell(f)$. If $W \cap B_\delta(x) = \emptyset$, then the right hand side of (\ref{must prove}) vanishes, so there is nothing to prove. Now suppose that  $W \cap B_\delta(x) \neq \emptyset$. From Theorem \ref{unstable manifolds} item (iii), we know that $W \subset \Lambda_{\ell}(f, \sigma^{1/2})$. Therefore, there exists $D \subset \cQ'$ such that $W \cap B_{2\delta}(x) \subset D$. In this case  (\ref{must prove}) follows from \eqref{eq:finalbound}. 

\end{proof}

\subsection{Concluding the proof of Theorem~\ref{cu is closed}}

Everything done so far in section \ref{proof of prop} (briefly speaking, our Pliss-like Lemma and our characterization of Gibbs $cu$-states) have been for the purpose of proving Theorem~\ref{cu is closed}. The proof is based on the observation that the quantities $\ell$ and $K$ in Theorem~\ref{characterization} are uniform in a neighborhood of $f$.

Let $f$ be a $C^r$ mostly expanding diffeomorphism, $r>1$. Consider a sequence $(f_n, \mu_n) \in \Gcu(\mathcal{U}_{\mathcal{ME}})$ such that $f_n$ converges in $C^r$ to some mostly expanding diffeomorphism $f$ and $\mu_n$ converges weakly* to some measure $\mu$. To prove Theorem~\ref{cu is closed} we must establish that $\mu$ is a Gibbs $cu$-state. We do that by showing that the inequality (\ref{characterizationinequality}) passes to the limit. The following straightforward lemma is useful.

\begin{lemma}\label{closeness of pesin blocks}
Let $f:M \to M$ be a $C^r$, $r>1$,  mostly expanding diffeomorphism, $\ell \in \mathbb{N}$, and suppose that $f_n$ is a sequence of $C^r$ mostly expanding diffeomorphisms converging to $f$. Then any accumulation point of $\Lambda_\ell(f_n)$ belongs to $\Lambda_\ell(f)$. That is, 
\begin{equation*}
\bigcap_{n} \overline{\bigcup_{k\geq n} \Lambda_\ell(f_n)} \subset \Lambda_\ell(f).
\end{equation*}
\end{lemma}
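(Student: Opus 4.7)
The plan is to unwind the definition of $\Lambda_\ell(f,\sigma)$ and show that, for each fixed number of iterates $k$, the defining inequality is a closed condition in the product topology on $\mathrm{Diff}^r(M) \times M$. The result then follows by a straightforward diagonal/limit argument.

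Concretely, recall from \eqref{eq:deflambdan} and \eqref{eq:deflambda} that $x \in \Lambda_\ell(f,\sigma)$ iff for every $k \geq 1$,
\[
\Phi_k(f,x) := \prod_{j=0}^{k-1} \|Df^{-\ell}|E^{cu}_{f^{-\ell j}(x)}(f)\| \leq \sigma^{\ell k}.
\]
A point $x$ belongs to $\bigcap_n \overline{\bigcup_{k\geq n}\Lambda_\ell(f_k)}$ exactly when there are sequences $n_i \to \infty$ and $y_i \to x$ with $y_i \in \Lambda_\ell(f_{n_i},\sigma)$. So I would fix such sequences and any $k \geq 1$, and verify
\[
\Phi_k(f_{n_i},y_i) \;\longrightarrow\; \Phi_k(f,x),
\]
whence $\Phi_k(f,x) \leq \sigma^{\ell k}$ because each $\Phi_k(f_{n_i},y_i) \leq \sigma^{\ell k}$. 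Since this holds for every $k$, we conclude $x \in \Lambda_\ell(f,\sigma)$.

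The ingredients for the convergence $\Phi_k(f_{n_i},y_i) \to \Phi_k(f,x)$ are all standard and already invoked in the paper. First, $f_{n_i} \to f$ in $C^r \subset C^1$, and $y_i \to x$, so $f_{n_i}^{-\ell j}(y_i) \to f^{-\ell j}(x)$ for each $0 \leq j \leq k-1$. Second, by \cite[Corollary 2.17]{HP2006} cited in Section~\ref{ssec:PH}, the invariant bundle $E^{cu}$ depends continuously on both the diffeomorphism (in the $C^r$ topology, for $g$ ranging in a neighborhood of $f$ in $\PH^r(M)$) and the base point; hence
\[
E^{cu}_{f_{n_i}^{-\ell j}(y_i)}(f_{n_i}) \;\longrightarrow\; E^{cu}_{f^{-\ell j}(x)}(f)
\]
in the Grassmannian. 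Combined with $C^1$ convergence of $Df_{n_i}^{-\ell}$ to $Df^{-\ell}$, each factor $\|Df_{n_i}^{-\ell}|E^{cu}_{f_{n_i}^{-\ell j}(y_i)}(f_{n_i})\|$ converges to the corresponding factor for $(f,x)$, and the product of the $k$ factors converges likewise.

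There is no real obstacle here: the only subtle point is to make sure that everything is evaluated with the bundle of the \emph{correct} diffeomorphism, i.e.\ $E^{cu}(f_{n_i})$ for the term indexed by $i$, and to invoke the joint continuity of $(g,y) \mapsto E^{cu}_y(g)$ rather than a pointwise continuity. Once this is set up, each fixed $k$ passes to the limit by finite composition, and taking the intersection over $k$ yields $x \in \Lambda_\ell(f,\sigma)$, as required.
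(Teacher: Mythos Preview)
Your proof is correct and follows essentially the same approach as the paper: both arguments fix a finite number of iterates $k$ (the paper calls it $N$), use joint continuity of $(g,y)\mapsto \|Dg^{-\ell}\vert E^{cu}_y(g)\|$ to pass the product inequality to the limit, and then intersect over $k$. The only cosmetic difference is that the paper phrases continuity via an $\eta$-uniform estimate yielding $\Phi_N(f,x)\leq (1+\eta)\sigma^{\ell N}$ and then lets $\eta\to 0$, whereas you invoke convergence $\Phi_k(f_{n_i},y_i)\to\Phi_k(f,x)$ directly; the underlying ingredients are identical.
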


\begin{proof} Let us fix $\ell\geq 1$ and $N\geq 1$. Then, just for  continuity of $(x,f) \to \|Df^{-\ell}\vert E^{cu}(y)\|$, for $\eta>0$ there exist a neighborhood $\mathcal{U}_N$ of $f$, and $\epsilon_N>0$  such that if $g\in \mathcal{U}_N$ and  $x,y\in M$ satisfies  $\di(x,y)<\epsilon_N$, then
\begin{equation}\label{eq:cota4.3.1}
\prod_{j=0}^{N-1} \|Df^{-\ell}\vert E_{f^{-\ell j}(x)}^{cu}\| \prod_{j=0}^{N-1} \|Dg^{-\ell}\vert E_{g^{-\ell j}(y)}^{cu}\|^{-1} < 1+\eta.
\end{equation}

Since $f_n\to f$, we can assume that there exist $n\geq 1$ such that, for every $k\geq n$, $f_k\in\mathcal{U}_N$. If we assume  that $x\in \overline{\bigcup_{k\geq n} \Lambda_\ell(f_n)}$, for every $n\geq 1$, then for  $\epsilon_N>0$ there exist $k\geq n$ and $y\in \Lambda_\ell(f_k) $ such that $\di(x,y)<\epsilon_N$. Since $y\in \Lambda_\ell(f_k) $, then 
\begin{equation}\label{eq:cota4.3.2}
\prod_{j=0}^{N-1} \|Df_k^{-\ell}\vert E_{f_k^{-\ell j}(y)}^{cu}\| \leq \sigma^{\ell N}.
\end{equation}
On the other hand, since  $\di(x,y)<\epsilon_N$, it follows from \eqref{eq:cota4.3.1} and \eqref{eq:cota4.3.2} that
\begin{equation}\label{eq:cota4.3.3}
\prod_{j=0}^{N-1} \|Df^{-\ell}\vert E_{f^{-\ell j}(x)}^{cu}\| \leq (1+\eta)\sigma^{\ell N}
\end{equation}

Since $\eta>0$  and $N\geq 1$ are arbitrary, then $x\in \Lambda_\ell(f)$ as we claim.

\end{proof}

\begin{proof}[Ending the proof of Theorem~\ref{cu is closed}]

Fix $\epsilon >0$ arbitrarily and let $\ell_0\geq 1$ be as in Lemma~\ref{large pesin blocks}. Fix $\ell \geq \ell_0 $ arbitrarily. Since $f_n$ converges to $f$, we have $\mu_n(\Lambda_\ell(f_n))> 1-\epsilon$ for sufficiently large $n$, say $n \geq n_0$. Also, we fix $r=r(\ell)>0$ and  $\delta(\ell)>0$ as in  Theorem~\ref{unstable manifolds}. 

%We can find $k=k(\ell)\geq 1$ and $x_1,\dots x_k\in M$, such that $\{B(x_i,\delta):i=1,\dots k\}$ is a finite covering of $M$.  Denote by $Q_i=Q(\ell, x_i)$, and $\cQ=(\ell, x_i)$ their respective lamination bundle. Note that $\Lambda_\ell(f)\subseteq\cup_{i=1}^k Q_i$.
%For every $i \in \{1, \ldots, k \}$, we denote $\mu_i$  the restriction of  $\mu $ to $Q_i$. Denotes by $\{\mu_D: D \in \cQ_i\}$ the family of conditional measures of $\mu_i$ with respect to the partition $\cQ_i$ and by $\hat{\mu}_i$ the factor measures defined on $\cQ_i$. 

According to Theorem~\ref{characterization}, it suffices to prove that there exists $K>0$ such that, given any continuous function $\varphi : M \to [0,1]$, we have
\begin{equation*}
\int \varphi \ d\mu_n < \epsilon + K \sup_{W \in \cW_\ell(f)} \int \varphi \ d m_W.
\end{equation*}
For every $x \in M$, consider the sets
\begin{equation*}
\cQ_n(\ell, x) = \{W_{r}^{cu}(y)\cap B_{2 \delta} (x): y \in \Lambda_\ell(f_n) \cap \overline{B_\delta (x)} \}
\end{equation*}
and 
\begin{equation*}
Q_n(\ell,  x) = \bigcup_{D \in \cQ_n(\ell,x)} D.
\end{equation*}
Choose points $x_1, \ldots, x_k$, $k=k(\ell)>0$, so that $M \subset B_\delta(x_1)\cup \ldots \cup B_\delta(x_k)$. Write $Q^i_n = Q^i_n(\ell, x_i)$ and $\cQ_n^i = \cQ^i_n(\ell, x_i)$ for $1 \leq i \leq k$ and  we denote $\mu_n^i$  the restriction of  $\mu_n $ to $Q_n^i$. Denote by $\{\mu^i_{n,D}: D \in \cQ_n^i\}$ the family of conditional measures of $\mu_n^i$ with respect to the partition $\cQ_n^i$ and by $\hat{\mu}_n^i$ the factor measures defined on $\cQ_n^i$. 

%\begin{equation*}
%\mu_n \vert Q_i^n = \int_{\cQ_i^n} \left( \int \varphi \ d\mu_D \right) \ d\hat{\mu}_i^n(D)
%\end{equation*}
%be the Rokhlin disintegration of the restriction of $\mu_n$. 

Let $L>1$ be such that, for every $n\geq n_0$, every $i \in \{1, \ldots, k \}$ and $\hat{\mu}_n^i$-almost every $D \in \cQ_n^i$, the density of $\mu^i_{n,D}$ with respect to $m_D$ is bounded above by $L$. Let $K=k L$. Now consider any continuous function $\varphi :M \to [0,1]$. For every $n>n_0$ and every $i \in \{1, \ldots, k\}$ we have
\begin{equation*}
\int_{Q_n^i} \varphi \ d\mu_n = \int_{\cQ_n^i} \left( \int \varphi \ d\mu^i_{n,D} \right) d\hat{\mu}_n^i(D) \leq L \mu(Q_n^i) \sup_{D \in Q^i_n} \int \varphi \ dm_D 
\end{equation*}
 Notice that $\mu_n(Q_n^i)\geq\mu_n(\Lambda_\ell(f_n) \cap B_{ \delta} (x_i))$ for every $1\leq i \leq k$ and every $n \geq n_0$. Since $M = B_{ \delta}(x_1) \cup \ldots \cup B_{ \delta}(x_k)$, this gives us the estimate $\mu_n(Q_n^1 \cup \ldots \cup Q_n^k)>1-\epsilon$. It follows that 
\begin{equation*}
\int \varphi \ d\mu_n < \epsilon + \sum_{i=1}^k L \mu(Q_n^i) \sup_{D \in \cQ_n^i} \int_D \varphi \ dm_D
\leq L k \max_{1\leq i \leq k} \sup_{D \in \cQ_n^i} \int \varphi \ dm_D.
\end{equation*}
 For every $n>n_0$, choose a disk $D_n \in \cQ_n^1 \cup \ldots \cup \cQ_n^k$ such that 
\begin{equation*}
\int \varphi \ dm_{D_n} = \max_{1\leq i \leq k} \sup_{D \in \cQ_n^i} \int \varphi \ dm_D .
\end{equation*}
Each disk $D_n$ is contained in some $W_n \in \cW_\ell(f_n)$. By Lemma~\ref{closeness of pesin blocks}, $W_n$ accumulates on some disk $W \in \cW_\ell(f )$. Therefore
\begin{equation*}
\int \varphi \ d\mu \leq \epsilon + k L \int \varphi \ dm_W. 
\end{equation*}
In particular,
\begin{equation*}
\int \varphi \ d\mu < \epsilon + K \cdot \sup_{W \in \cW_{\ell}(f)} \int \varphi \ dm_W,
\end{equation*}
where $K = kL+1$ is independent of $\varphi$.
\end{proof}

\section{Proof of main theorems}\label{proof several pm}

We are now in a position to prove Theorems~\ref{one pm},~\ref{several pm}, and \ref{weak stability}. 

\subsection{Proof of Theorem~\ref{weak stability}}
 Let $f$ be mostly expanding. By Theorem~\ref{MACA1teoC} there are finitely many physical measures $\nu_1, \ldots \nu_k$. By the definition of weakly statistically stability, it suffices to prove that  given any  sequence of diffeomorphisms $f_n$ converging to $f$ in the $C^r$ topology, $r>1$, and any sequence of physical measures $\mu_n$ of $f_n$, if  $\mu$ is an accumulation point of $\mu_n$, then $\mu$ is a convex combination of $\nu_1, \ldots \nu_k$. 
  
  Let $f_n$ be a sequence of diffeomorphisms converging to $f$ in the $C^r$ topology for some $r>1$. Theorem~\ref{MACA1teoC} again, implies that mostly expanding is an open condition. Then,  we may  suppose that each $f_n$ is mostly expanding. Let $\mu_n$ be a sequence of physical measures for $f_n$ respectively. Proposition~\ref{pm vs cu} implies that each $\mu_n$ is a Gibbs $cu$-state. Upon possibly taking a subsequence, we may suppose that the sequence $\mu_n$ converges to some measure $\mu$. Theorem~\ref{cu is closed} tells us that $\mu$ is a Gibbs $cu$-state. We know that (Proposition~\ref{prop:cuM2}) Gibbs $cu$-states are convex combinations of ergodic Gibbs $cu$-states and, again by Proposition~\ref{pm vs cu}, that ergodic Gibbs $cu$-states are physical measures. It follows that $\mu$ is a convex combination of physical measures of $f$. This completes the proof of Theorem~\ref{weak stability}.

\subsection{Proof of Theorem~\ref{several pm}}
The proof of  upper semi-continuity of the number of physical measures is by contradiction. Suppose, therefore, that upper semi-continuity on the number of physical measures does not hold. That means that there exists some mostly expanding diffeomorphism $f$, and a sequence $f_n \rightarrow f$ of mostly expanding diffeomorphisms converging to $f$ in the $C^r$ topology, all of which have a number of physical measures larger than that of $f$. In other words, if the physical measures of $f$ are $\mu^j, \ j \in J$ for some finite set $J$, there are measures $\nu_n^i, \ i\in I$ for some finite set $I$ with $|I| = |J|+1$ such that 
\begin{enumerate}
\item $\nu_n^i$ is a physical measure for $f_n$ for every $n$,
\item $\nu_n^i \neq \nu_n^{i'}$ for every $n$ and every $i,i' \in I$, with $i \neq i'$.
\end{enumerate}
Upon taking  an appropriate subsequence, we may also assume from Theorem~\ref{weak stability} that
\begin{enumerate}[resume]
\item for each $i \in I$, there exist non-negative numbers $\alpha_{i, j}, \ j \in J$ with $\sum_{j \in J} \alpha_{i,j} = 1$, such that 
$\nu_n^i \to \sum_{j \in J} \alpha_{i, j} \mu^j$. 
\end{enumerate}  

To get a contradiction, we shall prove that each column in the matrix $(\alpha_{i,j})$ can have at most one positive element. Since the number of rows is larger than the number of columns, this implies that $(\alpha_{i,j})$ must have a row of zeros, contradicting $\sum_{j \in J} \alpha_{i,j} = 1$.

To see why each column of $\alpha_{i,j}$ can have at most one positive element, let $P = \{(i,j) \in I \times J : \alpha_{i,j}>0 \}$ and $\alpha = \min \{ \alpha_{i,j}: (i,j) \in P \}$. Recall that each point in the Pesin block $\Lambda_\ell(f_n)$ has an unstable manifold of a fixed size $r=r(\ell)>0$. Moreover, from Lemma~\ref{large hyperbolic set of iterate} we know that it is possible to choose $\ell$ such that $\nu_n^i(\Lambda_\ell(f_n))>1-\alpha/2$ for every large $n$.  

 The angle between $E^s$ and $E^{cu}$ is bounded away from zero in a robust manner. Hence there is some $\rho>0$ such that for every large $n$, and any point $x\in \Lambda_\ell(f_n)$, the set $\Gamma(f_n, \ell, x) = \bigcup_{y \in W_{r}^{cu}(x)} W^s(f_n, y)$ contains the ball $B_\rho(x)$.
 
 We cover the supports of $\mu^j$ by balls $B_{\rho/2}(x_{j,k})$, $k \in K$, where $K$ is some finite set. For sufficiently large $n$ we have
 \begin{enumerate}
 \item[{(iv)}] $\nu_n^i(B_{\rho/2} (x_{j, k})) > 0 $ for every $(i,j, k) \in P \times K$, and
 \item[{(v)}] $\nu_n^i (\Lambda_\ell(f_n)  \cap B_{\rho/2} (x_{j,k}) ) > 0$ for every $(i,j) \in P$ and some $k \in K$.
 \end{enumerate}
 
 Thus given any $i\in I$ choose $j \in J$ such that $\alpha_{i,j} > 0$ and $k \in K$ such that (v) holds. Since  $\nu_n^i$ is an ergodic Gibbs $cu$-state, there is some $x \in B_{\rho/2}(x_{j, k})$ such that $B(\nu_n^i)$ has full leaf volume in $W_{r}^{cu}(x)$. Therefore, by absolute continuity of the stable foliation, $B(\nu_n^i)$ has full volume in $\Gamma(f_n,\ell, x)$. By our choice of $\rho$, we have $\Gamma(f_n,\ell, x) \supset B_{\rho}(x) \supset B_{\rho/2}(x_{j, k})$ so, in particular, $B(\nu_n^i)$ has full volume in $B_{\rho/2}(x_{i,k})$.

Now take any $i' \in I$ different from $i$. We claim that $(i',j) \notin P$. Indeed, if it were not so, then by (iv) we would have $\nu_n^{i'}(B_{\rho/2}(x_{j,k}))>0$ for sufficiently large $n$. Therefore, there would be some $\ell'$ and some $x' \in B_{\rho/2}(x_{j,k})$ such that $B(\nu_n^{i'})$ has full leaf volume in $W_{r(\ell')}^{cu}$. Again, by absolute continuity of the stable foliation, that would imply that $B(\nu_n^{i'})$ has positive volume in $B_{\rho/2}(x_{j,k})$. But that is absurd, since $B(\nu_n^i)$ has full volume in $B_{\rho/2}(x_{j,k})$ and basins of distinct physical measures are disjoint. Thus we have proved that each column in the matrix $(\alpha_{i,j})$ has at most one non-zero entry  and the proof of upper semi-continuity of the number of physical measures is complete. 
 
It remains to prove statistical stability in its most general setting. To this end, we will use the  claimed proved above: there is a neighborhood of $f$ where the number of physical measures is constant. Then we can suppose that $f_n$ is a sequence of mostly expanding diffeomorphisms converging to a mostly expanding diffeomorphism $f$ and that each $f_n$ and $f$ all have the same number of physical measures. 
 We use the notation above, so that the physical measures of $f$ are $\mu^j$, $j \in J$ and those of $f_n$ are $\nu_n^i$, $i \in I$. The difference now is that $|I| = |J|$. By taking subsequences we may assume that $\nu_n^i \rightarrow \sum_{j \in J} \alpha_{i,j} \mu^j$ for some non-negative numbers $\alpha_{i,j}$ with $\sum_{j \in J} \alpha_{i,j} = 1$. It was proved above that in this case, each column of the $|I| \times |J|$ matrix $A = (\alpha_{i,j})_{(i,j) \in I\times J}$ has at most one positive element. Now, $A$ is a square matrix and the sum of the entries in each row is $1$. In particular each row has at least one positive entry. Therefore $A$ must be a permutation matrix, i.e. one for which each column and each row has exactly one entry equal to $1$ and all other entries are zero. Define the map $\tau: I \to J$ so that $\tau(i)$ is the unique element of $A$ such that $\alpha_{i,j} = 1$. Then $\nu_i^n$ converges to $\mu_{\tau(i)}$ for every $i \in I$. That completes the proof of statistical stability.

\subsection{Proof of Theorem~\ref{one pm}}

Let $f:M \to M$ be as in Theorem~\ref{one pm}. Then Theorem~\ref{MACA1teoC} says that $f$ has a finite number of physical measures, whose basin of attraction cover Lebesgue almost every point in $M$. Now, since $f$ is mostly expanding, the basin of each physical measure is open, up to a zero Lebesgue measure set (see e.g. \cite[Lemma 4.5]{AV2018}). Thus it follows from the assumption of transitivity that $f$ has exactly one physical measure. Now, according to Theorem~\ref{several pm}, the number of physical measures varies upper semi-continuously on $f$. Hence there is a $C^r$ neighborhood $\cU$ of $f$ such that every $g \in \cU$ has a unique physical measure $\mu_g$ whose basin has full Lebesgue measure in $M$. Moreover, Theorem ~\ref{several pm} implies that the map $ \Diff(M) \ni g \mapsto \mu_g \in \M^1(M)$ is indeed continuous. 

\vspace{1cm}
\noindent {\bfseries Acknowledgements} We thank the reviewers for them thorough review and highly appreciate the comments and suggestions, which significantly contributed to improving the quality of the publication.

\bibliographystyle{plain}

\bibliography{MACA2}

\def\cprime{$'$}
\begin{thebibliography}{10}

\bibitem{Abdenur2011}
Flavio Abdenur, Christian Bonatti, and Sylvain Crovisier.
\newblock Nonuniform hyperbolicity for c1-generic diffeomorphisms.
\newblock {\em Israel Journal of Mathematics}, 183(1):1, Jun 2011.

\bibitem{A2004}
Jos\'{e}~F. Alves.
\newblock Strong statistical stability of non-uniformly expanding maps.
\newblock {\em Nonlinearity}, 17(4):1193--1215, 2004.

\bibitem{ABV2000}
Jos\'{e}~F. Alves, Christian Bonatti, and Marcelo Viana.
\newblock S{RB} measures for partially hyperbolic systems whose central
  direction is mostly expanding.
\newblock {\em Invent. Math.}, 140(2):351--398, 2000.

\bibitem{ACF2010}
Jos\'{e}~F. Alves, Maria Carvalho, and Jorge~Milhazes Freitas.
\newblock Statistical stability and continuity of {SRB} entropy for systems
  with {G}ibbs-{M}arkov structures.
\newblock {\em Comm. Math. Phys.}, 296(3):739--767, 2010.

\bibitem{AV2002}
Jos\'{e}~F. Alves and Marcelo Viana.
\newblock Statistical stability for robust classes of maps with non-uniform
  expansion.
\newblock {\em Ergodic Theory Dynam. Systems}, 22(1):1--32, 2002.

\bibitem{A2000}
Jos\'{e}~Ferreira Alves.
\newblock S{RB} measures for non-hyperbolic systems with multidimensional
  expansion.
\newblock {\em Ann. Sci. \'{E}cole Norm. Sup. (4)}, 33(1):1--32, 2000.

\bibitem{A2010}
Martin Andersson.
\newblock Robust ergodic properties in partially hyperbolic dynamics.
\newblock {\em Trans. Amer. Math. Soc.}, 362(4):1831--1867, 2010.

\bibitem{AV2018}
Martin Andersson and Carlos~H. V\'{a}squez.
\newblock On mostly expanding diffeomorphisms.
\newblock {\em Ergodic Theory Dynam. Systems}, 38(8):2838--2859, 2018.

\bibitem{AB2012}
Artur Avila and Jairo Bochi.
\newblock Nonuniform hyperbolicity, global dominated splittings and generic
  properties of volume-preserving diffeomorphisms.
\newblock {\em Trans. Amer. Math. Soc.}, 364(6):2883--2907, 2012.

\bibitem{barreira_pesin_2007}
Luis Barreira and Yakov Pesin.
\newblock {\em Nonuniform Hyperbolicity: Dynamics of Systems with Nonzero
  Lyapunov Exponents}.
\newblock Encyclopedia of Mathematics and its Applications. Cambridge
  University Press, 2007.

\bibitem{BDV2005}
Christian Bonatti, Lorenzo~J. D\'{\i}az, and Marcelo Viana.
\newblock {\em Dynamics beyond uniform hyperbolicity}, volume 102 of {\em
  Encyclopaedia of Mathematical Sciences}.
\newblock Springer-Verlag, Berlin, 2005.
\newblock A global geometric and probabilistic perspective, Mathematical
  Physics, III.

\bibitem{BV2000}
Christian Bonatti and Marcelo Viana.
\newblock S{RB} measures for partially hyperbolic systems whose central
  direction is mostly contracting.
\newblock {\em Israel J. Math.}, 115:157--193, 2000.

\bibitem{BDPP2008}
Keith Burns, Dmitry Dolgopyat, Yakov Pesin, and Mark Pollicott.
\newblock Stable ergodicity for partially hyperbolic attractors with negative
  central exponents.
\newblock {\em J. Mod. Dyn.}, 2(1):63--81, 2008.

\bibitem{CP2015}
Sylvain Crovisier and Enrique~R. Pujals.
\newblock Essential hyperbolicity and homoclinic bifurcations: a dichotomy
  phenomenon/mechanism for diffeomorphisms.
\newblock {\em Invent. Math.}, 201(2):385--517, 2015.

\bibitem{D2004}
Dmitry Dolgopyat.
\newblock Limit theorems for partially hyperbolic systems.
\newblock {\em Trans. Amer. Math. Soc.}, 356(4):1637--1689, 2004.

\bibitem{D2003}
Dmitry Dolgopyat.
\newblock On differentiability of {SRB} states for partially hyperbolic
  systems.
\newblock {\em Invent. Math.}, 155(2):389--449, 2004.

\bibitem{DVY2016}
Dmitry Dolgopyat, Marcelo Viana, and Jiagang Yang.
\newblock Geometric and measure-theoretical structures of maps with mostly
  contracting center.
\newblock {\em Comm. Math. Phys.}, 341(3):991--1014, 2016.

\bibitem{HP2006}
Boris Hasselblatt and Yakov Pesin.
\newblock Partially hyperbolic dynamical systems.
\newblock In {\em Handbook of dynamical systems. {V}ol. 1{B}}, pages 1--55.
  Elsevier B. V., Amsterdam, 2006.

\bibitem{HPS1977}
M.~W. Hirsch, C.~C. Pugh, and M.~Shub.
\newblock {\em Invariant manifolds}.
\newblock Lecture Notes in Mathematics, Vol. 583. Springer-Verlag, Berlin-New
  York, 1977.

\bibitem{K1994}
Ittai Kan.
\newblock Open sets of diffeomorphisms having two attractors, each with an
  everywhere dense basin.
\newblock {\em Bull. Amer. Math. Soc. (N.S.)}, 31(1):68--74, 1994.

\bibitem{M1987}
Ricardo Ma\~{n}\'{e}.
\newblock {\em Ergodic theory and differentiable dynamics}, volume~8 of {\em
  Ergebnisse der Mathematik und ihrer Grenzgebiete (3) [Results in Mathematics
  and Related Areas (3)]}.
\newblock Springer-Verlag, Berlin, 1987.
\newblock Translated from the Portuguese by Silvio Levy.

\bibitem{MCY2017}
Zeya Mi, Yongluo Cao, and Dawei Yang.
\newblock A note on partially hyperbolic systems with mostly expanding centers.
\newblock {\em Proc. Amer. Math. Soc.}, 27(2):589--607, 2017.

\bibitem{PS1982}
Ya.~B. Pesin and Ya.~G. Sina\u{\i}.
\newblock Gibbs measures for partially hyperbolic attractors.
\newblock {\em Ergodic Theory Dynam. Systems}, 2(3-4):417--438 (1983), 1982.

\bibitem{P1972}
V.~A. Pliss.
\newblock On a conjecture of {S}male.
\newblock {\em Differencial\cprime nye Uravnenija}, 8:268--282, 1972.

\bibitem{MR983869}
Charles Pugh and Michael Shub.
\newblock Ergodic attractors.
\newblock {\em Trans. Amer. Math. Soc.}, 312(1):1--54, 1989.

\bibitem{SW2000}
Michael Shub and Amie Wilkinson.
\newblock Pathological foliations and removable zero exponents.
\newblock {\em Invent. Math.}, 139(3):495--508, 2000.

\bibitem{V2007}
Carlos~H. V\'{a}squez.
\newblock Statistical stability for diffeomorphisms with dominated splitting.
\newblock {\em Ergodic Theory Dynam. Systems}, 27(1):253--283, 2007.

\bibitem{V2009}
Carlos~H. V\'{a}squez.
\newblock Stable ergodicity for partially hyperbolic attractors with positive
  central {L}yapunov exponents.
\newblock {\em J. Mod. Dyn.}, 3(2):233--251, 2009.

\bibitem{VY2013}
Marcelo Viana and Jiagang Yang.
\newblock Physical measures and absolute continuity for one-dimensional center
  direction.
\newblock {\em Ann. Inst. H. Poincar\'{e} Anal. Non Lin\'{e}aire},
  30(5):845--877, 2013.

\bibitem{Y2016}
Jiagang Yang.
\newblock Entropy along expanding foliations.
\newblock {\em ArXiv e-prints}, 2018.

\bibitem{LSY1998}
Lai-Sang Young.
\newblock Statistical properties of dynamical systems with some hyperbolicity.
\newblock {\em Ann. of Math. (2)}, 147(3):585--650, 1998.

\bibitem{LSY1999}
Lai-Sang Young.
\newblock Recurrence times and rates of mixing.
\newblock {\em Israel J. Math.}, 110:153--188, 1999.

\end{thebibliography}

\end{document}